\documentclass[pdflatex]{sn-jnl}
\usepackage{graphicx}%
\usepackage{multirow}%
\usepackage{amsmath,amssymb,amsfonts}%
\usepackage{amsthm}%
\usepackage{mathrsfs}%
\usepackage[title]{appendix}%
\usepackage{xcolor}%
\usepackage{textcomp}%
\usepackage{manyfoot}%
\usepackage{booktabs}%
\usepackage{algorithm}%
\usepackage{algorithmicx}%
\usepackage{algpseudocode}%
\usepackage{listings}%
\usepackage{array}
\usepackage{hhline}
\usepackage[shortlabels]{enumitem}
\theoremstyle{thmstyleone}%
\newtheorem{theorem}{Theorem}
\newtheorem{proposition}[theorem]{Proposition}%
\newtheorem{lemma}[theorem]{Lemma}%
\newtheorem{corollary}[theorem]{Corollary}%
\theoremstyle{thmstyletwo}%
\newtheorem{remark}{Remark}%

\theoremstyle{thmstylethree}%

\begin{document}


\title[Article Title]{Rational points and inflexions on Hermitian-relative curves}

\author[1]{\fnm{Masaaki} \sur{Homma}}\email{homma@kanagawa-u.ac.jp}

\author*[2]{\fnm{Seon Jeong} \sur{Kim}}\email{skim@gnu.ac.kr}
\equalcont{These authors contributed equally to this work.}


\affil[1]{\orgdiv{Department of Mathematics}, \orgname{Kanagawa University},   \city{Yokohama}, \postcode{221-8686},   \country{Japan}}

\affil*[2]{\orgdiv{Department of Mathematics and RINS}, \orgname{Gyeongsang National University},   \city{Jinju}, \postcode{52828},   \country{Korea}}

\abstract{In \cite{hom-kim2024}, we introduced the notion of a Hermitian-relative curve, which is a plane curve defined by $(x^{\sqrt{q}}, y^{\sqrt{q}}, z^{\sqrt{q}})A (x,y,z)^t =0$ with $A \in GL(3, \mathbb{F}_q)$. After investigating their basic properties, we classified those curves with two or more rational inflexions. In this paper, we first complete the classification of curves whose rational points are all inflexions by demonstrating the existence of curves with exactly one rational point. As a continuation, we then classify Hermitian-relative curves that possess at least one rational point which is not an inflexion. We categorize these curves according to the ordered pair consisting of the number of rational points and the number of inflexions. Finally, we enumerate all possible pairs and prove that each pair is realized by a suitable curve.}

\keywords{Plane curve, Finite field, Hermitian-relative curve, Rational point, Inflexion}


\pacs[MSC Classification]{14G15, 14H50, 14G05, 11G20}

\maketitle

\section{Introduction}
Let $q$ be an even power of a prime $p$.
Hermitian curves are significant objects in various fields of study. The most commonly utilized forms of their equations are $x^{\sqrt{q}+1} + y^{\sqrt{q}+1} + z^{\sqrt{q}+1} = 0$ or $x^{\sqrt{q}+1} = y^{\sqrt{q}}z + yz^{\sqrt{q}}$. In a more general representation, a Hermitian curve is defined by the following quadratic form:$$(x^{\sqrt{q}}, y^{\sqrt{q}}, z^{\sqrt{q}}) A \begin{pmatrix} x \\ y \\ z \end{pmatrix} = 0,$$subject to the condition $A = A^*$, where $A^*$ is the conjugate transpose defined by $(a_{ij})^* = (a_{ji}^{\sqrt{q}})^t$. For a more comprehensive treatment of Hermitian varieties in finite projective spaces, one may refer to Bose and Chakravarti \cite{bos-cha1966}.
In  \cite{hom-kim2024}, the authors generalized the aforementioned equation by replacing the matrix $A$ with a general nonsingular matrix in $GL(3,\mathbb{F}_q)$. This generalized object was referred to as a ``relative of a Hermitian curve", or more simply, a ``Hermitian-relative curve". In that paper, the authors presented the fundamental properties of those curves.
According to Pardini~\cite{par1986}, all Hermitian-relative curves, including the Hermitian curve itself, are projectively equivalent over the algebraic closure $\overline{\mathbb{F}}_q$ of $\mathbb{F}_q$, though this is certainly not the case over $\mathbb{F}_q$.
In~\cite{hom-kim2024}, we showed that every such curve has at least one 
$\mathbb{F}_q$-rational point; moreover, the number of these points is 
congruent to $1 \pmod{\sqrt{q}}$. In particular, we classified those 
curves possessing two or more rational inflexions.
It is a well-known fact that a Hermitian curve has $q\sqrt{q} + 1$ many $\mathbb{F}_q$-points. The tangent line at each $\mathbb{F}_q$-point has contact of order $\sqrt{q}+1$ with the curve, whereas the tangent line at any other point has contact of order $\sqrt{q}$.
Since any Hermitian-relative curve is projectively equivalent to a Hermitian curve over $\overline{\mathbb{F}}_q$, it follows that the tangent line at any point on a Hermitian-relative curve makes contact with the curve with multiplicity either $\sqrt{q}$ or $\sqrt{q}+1$.

A point of contact where the tangent line has multiplicity $\sqrt{q}+1$ is called an inflexion. Accordingly, every $\mathbb{F}_q$-point of a Hermitian curve is an inflexion. In contrast, a Hermitian-relative curve that is not a Hermitian curve possesses a smaller number of $\mathbb{F}_q$-points, some of which may not be inflexions. Since any Hermitian-relative curve is projectively equivalent to a Hermitian curve over the algebraic closure $\overline{\mathbb{F}}_q$, the total number of inflexion points over $\overline{\mathbb{F}}_q$ remains the same as that of a Hermitian curve. Therefore, it is mathematically meaningful to determine the number of inflexions specifically among the $\mathbb{F}_q$-rational points. For a Hermitian-relative curve $C$, we denote by $N_q(C)$ and $I_q(C)$ the numbers of $\mathbb{F}_q$-rational points and $\mathbb{F}_q$-rational inflexions on $C$, respectively. Clearly, it holds that $N_q(C) \ge I_q(C)$ by definition. We say that a curve $C$ is \emph{of type} $(n, i)$ if $N_q(C) = n$ and $I_q(C) = i$. For instance, a Hermitian curve $H$ satisfies $N_q(H) = I_q(H) = q\sqrt{q} + 1$; hence, $H$ is of type $(q\sqrt{q} + 1, q\sqrt{q} + 1)$.

Hirschfeld \cite{hir1979, hir1998} provided a classification of cubic curves; specifically, for the case $q=4$, all nonsingular curves whose equations do not contain an $xyz$ term are categorized as Hermitian-relative curves. These results are summarized in Table 2 of \cite{hom-kim2024}. According to this classification, the types of Hermitian-relative curves over $\mathbb{F}_4$ are given by
\begin{equation} \label{eq:types}
(9,9), (3,3), (1,1), (7,1), (5,1), (9,0), (3,0).
\end{equation}
The purpose of the present paper is to extend these results for $q=4$ to the case where $q$ is an arbitrary even power of a prime.

The remainder of this paper is organized as follows. Section 2 provides a brief review of the results from \cite{hom-kim2024}. In Section 3, we classify all Hermitian-relative curves with $N_q(C) = I_q(C)$. 
In Section 4, we establish a complete classification of Hermitian-relative curves based on their types.
\section{Preliminaries}
For each $A \in GL(3, \mathbb{F}_q)$, let $C_A$ be the curve defined by the equation
$$\begin{pmatrix} x^{\sqrt{q}} & y^{\sqrt{q}} & z^{\sqrt{q}} \end{pmatrix} A \begin{pmatrix} x \\ y \\ z \end{pmatrix} = 0.$$
 Note that $C_A = C_{\alpha A}$ for any $\alpha \in \mathbb{F}_q^*$; thus, a Hermitian-relative curve is uniquely determined by an element of $PGL(3, \mathbb{F}_q)$. 
 Two matrices $A$ and $B$ are said to be congruent (denoted by $A \sim B$) if there exists a nonsingular matrix $T \in GL(3, \mathbb{F}_q)$ such that $B = T^* A T$. 
 Under the coordinate transformation
 $$\begin{pmatrix} x' \\ y' \\ z' \end{pmatrix} = T^{-1} \begin{pmatrix} x \\ y \\ z \end{pmatrix},$$ 
 we obtain the following identity:
 $$\begin{pmatrix} x^{\sqrt{q}} & y^{\sqrt{q}} & z^{\sqrt{q}} \end{pmatrix} A \begin{pmatrix} x \\ y \\ z \end{pmatrix} = \begin{pmatrix} {x'}^{\sqrt{q}} & {y'}^{\sqrt{q}} & {z'}^{\sqrt{q}} \end{pmatrix} T^* A T \begin{pmatrix} x' \\ y' \\ z' \end{pmatrix}.$$ 
 This identity implies that the curves $C_A$ and $C_B$ are projectively equivalent. Consequently, we have the following:
\begin{lemma}[\cite{hom-kim2024}]\label{cong}
Two nonsingular matrices $A$ and $B$ in $PGL(3,\mathbb{F}_q)$  are congruent if and only if their corresponding curves $C_A$ and $C_B$ are projectively equivalent. More precisely, a matrix $T \in GL(3, \mathbb{F}_q)$ satisfies $B = \lambda T^* A T$ for some $\lambda \in \mathbb{F}_q^*$ if and only if $T$ induces a projective transformation from $C_B$ to $C_A$.
\end{lemma}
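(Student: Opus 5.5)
The plan is to prove the two directions of the precise statement separately. Throughout, write $\sigma(a)=a^{\sqrt{q}}$ and $F_A(X)=X^{(\sqrt q)\,t}AX$, where $X=(x,y,z)^t$ and $X^{(\sqrt q)}$ is the coordinatewise power. The easy direction is the computation already displayed before the lemma. If $B=\lambda T^*AT$, substitute $X=TX'$. Since $T$ has entries in $\mathbb{F}_q$, we have $(TX')^{(\sqrt q)}=T^{(\sqrt q)}X'^{(\sqrt q)}$, and therefore $F_A(TX')=F_{T^*AT}(X')=\lambda^{-1}F_B(X')$. So $X'\in C_B$ if and only if $TX'\in C_A$, which means $T$ maps $C_B$ onto $C_A$. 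The first sentence of the lemma then follows, because $C_{\alpha A}=C_A$ lets us absorb the scalar.

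For the converse, assume $T\in GL(3,\mathbb{F}_q)$ maps $C_B$ onto $C_A$. Then $C_{T^*AT}=T^{-1}(C_A)=C_B$ as point sets over $\overline{\mathbb{F}}_q$, by the same identity. So it suffices to show that $C_{A'}=C_B$ forces $A'=\lambda B$. First, both curves are nonsingular of degree $\sqrt q+1$. They are projectively equivalent over $\overline{\mathbb{F}}_q$ to the Hermitian curve (Pardini), hence irreducible, so the defining polynomials generate the same radical ideal. Two irreducible polynomials of the same degree with the same zero set differ by a nonzero scalar: $F_{A'}=\lambda F_B$ with $\lambda\in\overline{\mathbb{F}}_q^*$.

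Second, I would read off the matrix from the polynomial. The coefficient of the monomial $x_i^{\sqrt q}x_j$ in $F_M$ is exactly $m_{ij}$. The monomials $x_i^{\sqrt q}x_j$ for $i,j\in\{1,2,3\}$ are pairwise distinct, including $i=j$, which gives $x_i^{\sqrt q+1}$, since $\sqrt q\ge 2$. Hence $F_{A'}=\lambda F_B$ gives $A'=\lambda B$ entrywise. Since $B$ has a nonzero entry in $\mathbb{F}_q$ and $A'$ has entries in $\mathbb{F}_q$, we get $\lambda\in\mathbb{F}_q^*$. Thus $B=\lambda^{-1}T^*AT$, and the scalar form of the statement holds with $\lambda^{-1}$ in place of $\lambda$.

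The main obstacle is the step from equality of point sets to proportionality of polynomials. The cleanest route is irreducibility, which I would take from the projective equivalence with the Hermitian curve. An alternative is a direct nonsingularity check: the partial derivatives of $F_A$ are $\partial F/\partial x_j=\sum_i a_{ij}x_i^{\sqrt q}$, because $p\mid\sqrt q$. These vanish simultaneously only when $A^tX^{(\sqrt q)}=0$, which forces $X=0$ since $A$ is invertible. So the curve is smooth, and a smooth plane curve is irreducible, and irreducibility plus the Nullstellensatz finishes the argument. The identification of matrix entries with monomial coefficients is routine.
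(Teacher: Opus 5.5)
Your proof is correct. It coincides with the paper on the direction the paper actually argues, and it supplies the converse, which the paper only cites. The implication $B=\lambda T^*AT\Rightarrow T(C_B)=C_A$ is exactly the substitution identity displayed just before the lemma. That identity is all the paper offers here; the converse is imported from \cite{hom-kim2024}.

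Your converse holds up:
\begin{itemize}
\item The gradient $\nabla F_A=A^tX^{(\sqrt q)}$, valid because $p\mid\sqrt q$, shows $C_A$ is smooth, so $F_A$ is irreducible over $\overline{\mathbb{F}}_q$.
\item The Nullstellensatz then gives $F_{T^*AT}=\lambda F_B$.
\item Since $\sqrt q\ge 2$, the nine monomials $x_i^{\sqrt q}x_j$ are distinct. So $T^*AT=\lambda B$ entrywise, with $\lambda\in\mathbb{F}_q^*$.
\end{itemize}
Lead with this self-contained smoothness argument rather than the appeal to Pardini, which is a far heavier result than needed.

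Two small remarks. First, $(TX')^{(\sqrt q)}=T^{(\sqrt q)}X'^{(\sqrt q)}$ holds for any $T$ by additivity of Frobenius. Rationality of $T$ is needed only so that $T^*AT$ again has entries in $\mathbb{F}_q$. Second, the scalar $\lambda$ is absorbed into the projective class of $B$, not into $T$. Rescaling $T$ by $\mu$ multiplies $T^*AT$ only by $\mu^{\sqrt q+1}\in\mathbb{F}_{\sqrt q}^*$, so the first sentence of the lemma must be read with congruence taken up to scalars, that is, in $PGL(3,\mathbb{F}_q)$, as stated.
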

For a Hermitian-relative curve $C_A$, the curve $C_{A^*}$ is referred to as the mirror curve of $C_A$. It is important to note that $C_A$ is Hermitian if and only if   $C_A= C_{A^*} $. Let $\mathcal{L}_P(C)$ denote the tangent line to a curve $C$ at a point $P$. Recall that for $q > 4$, among nonsingular plane curves of degree $\sqrt{q}+1$, a Hermitian-relative curve $C$ is characterized by the following property: for any $\overline{\mathbb{F}}_q$-point $P$, the intersection multiplicity $i(C \cdot \mathcal{L}_P(C); P)$ is either $\sqrt{q}$ or $\sqrt{q}+1$ \cite{hom1987}.

\begin{proposition}[\cite{hom-kim2024}]\label{mirror}
For a mirror pair $(C_A, C_{A^*})$, the following properties hold:
\begin{enumerate}[{\rm (i)}]
\item $C_A(\mathbb{F}_q) = C_{A^*}(\mathbb{F}_q)$.
\item For $P \in C_A(\mathbb{F}_q)$, $P$ is an inflexion of $C_A$ if and only if it is an inflexion of $C_{A^*}$. In this case, their tangent lines coincide, i.e., $\mathcal{L}_P(C_A) = \mathcal{L}_P(C_{A^*})$.
\item If $P \in C_A(\mathbb{F}_q)$ is not an inflexion, there exists a unique point $P' \in C_A(\mathbb{F}_q) \setminus \{P\}$ such that the intersection divisor satisfies $C_A \cdot \mathcal{L}_P(C_A) = \sqrt{q}P + P'$. In this case, the intersection divisor for the mirror curve satisfies $C_{A^*} \cdot \mathcal{L}_{P'}(C_{A^*}) = \sqrt{q}P' + P$. In particular, $P'$ is also a non-inflexion of $C_{A^*}$.
\end{enumerate}
\end{proposition}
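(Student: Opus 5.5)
The proof will work directly with the equation. Write $f_A(X)=X^{(\sqrt q)t}AX$ for $X=(x,y,z)^t$, where $X^{(\sqrt q)}$ denotes coordinatewise Frobenius. For an $\mathbb{F}_q$-point $P$, with representative $P\in\mathbb{F}_q^3$, we have $P^{(q)}=P$, and the strategy throughout is to compare $C_A$ and $C_{A^*}$ through this Frobenius relation.

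\emph{Part (i).} Since $f_A(P)=P^{(\sqrt q)t}AP$ is a scalar in $\mathbb{F}_q$, we may raise it to the power $\sqrt q$. This gives
\[
f_A(P)^{\sqrt q}=P^{(q)t}A^{(\sqrt q)}P^{(\sqrt q)}=P^tA^{(\sqrt q)}P^{(\sqrt q)}.
\]
Transposing the $1\times1$ matrix yields $P^{(\sqrt q)t}(A^{(\sqrt q)})^tP=f_{A^*}(P)$. Hence $f_A(P)=0$ if and only if $f_{A^*}(P)=0$, which proves (i).

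\emph{Tangent lines.} The partial derivatives only see the linear variable $X$, because $\sqrt q$ is divisible by $p$. So the tangent line of $C_A$ at $P$ is the line with coordinate vector $\ell_A(P)=A^tP^{(\sqrt q)}$, that is, the line $P^{(\sqrt q)t}AX=0$.

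To compute the intersection with this line, parametrize it as $X=P+tQ$ with $Q\in\ell_A(P)$. Then
\[
f_A(P+tQ)=t\,P^{(\sqrt q)t}AQ+t^{\sqrt q}Q^{(\sqrt q)t}AP+t^{\sqrt q+1}f_A(Q),
\]
and the first term vanishes because $Q$ lies on the tangent line. Consequently $P$ is an inflexion exactly when $Q^{(\sqrt q)t}AP=0$ for all $Q$ in the tangent line, that is, when the line $\{Y : Y^{(\sqrt q)t}AP=0\}$ coincides with $\ell_A(P)$. Otherwise the residual point is $t=-Q^{(\sqrt q)t}AP/f_A(Q)$, and it is unique.

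\emph{Part (ii).} The tangent line of $C_{A^*}$ at $P$ is $P^{(\sqrt q)t}A^*X=0$. Raising its defining equation to the power $\sqrt q$ and using $P^{(q)}=P$, this line is the Frobenius twist of the line $X^{(\sqrt q)t}AP=0$ with conjugated coefficients. Since $P$ is rational, its coordinates are those of $AP$ conjugated, i.e.\ the line $\{Y : Y^{(\sqrt q)t}AP=0\}$ appearing in the inflexion criterion above. So $P$ is an inflexion of $C_A$ if and only if $\mathcal{L}_P(C_A)=\mathcal{L}_P(C_{A^*})$. This condition is symmetric in $A$ and $A^*$ (as $A^{**}=A$), which gives (ii) together with the coincidence of tangents.

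\emph{Part (iii).} Suppose $P$ is not an inflexion. Then the two lines $L=\mathcal{L}_P(C_A)$ and $L'=\mathcal{L}_P(C_{A^*})$ are distinct, and both are $\mathbb{F}_q$-rational. The residual point $P'$ lies on $L$. It is rational because the residual divisor $C_A\cdot L-\sqrt q P$ is a single point and is Frobenius-invariant, as $C_A$, $L$ and $P$ are all defined over $\mathbb{F}_q$. By (i), $P'\in C_{A^*}$.

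The claim to verify is that $C_{A^*}\cdot\mathcal{L}_{P'}(C_{A^*})=\sqrt qP'+P$. I expect this to be the main computational obstacle. From the expansion above, $P'$ is characterized by $P'^{(\sqrt q)t}AP'=0$ and $P^{(\sqrt q)t}AP'=0$. The tangent of $C_{A^*}$ at $P'$ is $P'^{(\sqrt q)t}A^*X=0$. Evaluating at $X=P$ and conjugating gives $(P'^{(\sqrt q)t}A^*P)^{\sqrt q}=P^{(\sqrt q)t}AP'=0$. Hence $P$ lies on $\mathcal{L}_{P'}(C_{A^*})$.

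It remains to show that $P'$ is not an inflexion of $C_{A^*}$ and that the residual point is $P$ rather than $P'$ itself. If $P$ were equal to $P'$ or the residual were $P'$, the line would meet $C_{A^*}$ at $P'$ with multiplicity $\sqrt q+1$. The intersection multiplicity at $P\neq P'$ is at least $1$, and the total degree is $\sqrt q+1$, so $P'$ has multiplicity exactly $\sqrt q$. This forces $C_{A^*}\cdot\mathcal{L}_{P'}(C_{A^*})=\sqrt qP'+P$, and in particular $P'$ is a non-inflexion of $C_{A^*}$. Uniqueness of $P'$ follows because the intersection divisor is determined by $L$.
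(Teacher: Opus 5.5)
Your proof is correct. The paper gives no proof of this proposition; it is quoted from \cite{hom-kim2024}. So the relevant comparison is with the tools the paper develops in Section 3.

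There, Proposition~\ref{ext_varphi} computes $\varphi_A(P)=A^{-1}A^*P^{(q)}$. It does this by transporting the Frobenius map of the Hermitian curve $C_J$ through a projective equivalence over $\overline{\mathbb{F}}_q$, which rests on Pardini's theorem. With that formula, (iii) and the first half of (ii) are nearly immediate: on rational points $\varphi_{A^*}=(A^*)^{-1}A=\varphi_A^{-1}$, so the two maps have the same fixed points and $\varphi_{A^*}(P')=P$.

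Your route is more elementary and self-contained. The single identity $f_A(sP+tQ)=t^{\sqrt q}\bigl(s\,Q^{(\sqrt q)t}AP+t\,f_A(Q)\bigr)$ and one Frobenius twist do all the work, with no appeal to equivalence over the algebraic closure. It also proves more than the statement asks. Your criterion $\mathcal{L}_P(C_A)=\mathcal{L}_P(C_{A^*})$ compares the coordinate vectors $A^tP^{(\sqrt q)}$ and $A^{(\sqrt q)}P^{(\sqrt q)}$. Applying Frobenius once more turns their proportionality into $A^*P\sim AP$, which is exactly Proposition~\ref{eigenvector}, here obtained without Pardini. In return, the paper's formula describes $\varphi_A$ on all of $C_A$. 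Your argument uses $P^{(q)}=P$ and is therefore confined to rational points.

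Two presentational fixes, neither of them a gap:
\begin{itemize}
\item The residual parameter $t=-Q^{(\sqrt q)t}AP/f_A(Q)$ should be read projectively: if $f_A(Q)=0$, the residual point is $Q$ itself.
\item In the last step, state explicitly that $\mathcal{L}_{P'}(C_{A^*})$ meets $C_{A^*}$ with multiplicity at least $\sqrt q$ at $P'$; this is your expansion applied to $A^*$. That lower bound, together with $P\neq P'$ and $P\in C_{A^*}\cap\mathcal{L}_{P'}(C_{A^*})$, is what forces the divisor to be $\sqrt q\,P'+P$. The clause ``if $P$ were equal to $P'$'' is vacuous, since you already know $P\neq P'$.
\end{itemize}
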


\begin{lemma}\label{mirror-inter}
Let $C_A$ be a Hermitian-relative curve that is not a Hermitian curve. Then the following hold:
\begin{enumerate}[{\rm (i)}]
\item $\sum_{P \in C_A(\mathbb{F}_q)} i(C_A \cdot C_{A^*}; P) \le \sum_{P \in C_A(\overline{\mathbb{F}}_q)} i(C_A \cdot C_{A^*}; P) = (\sqrt{q}+1)^2$.
\item If $P$ is an inflexion of $C_A$, then $i(C_A \cdot C_{A^*}; P) \ge \sqrt{q}+1$.
\item $N_q(C_A) \le (\sqrt{q}+1)^2$.
\end{enumerate}
\end{lemma}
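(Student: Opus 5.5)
Part (i) is Bézout's theorem, provided $C_A$ and $C_{A^*}$ share no common component. Both curves are nonsingular of degree $\sqrt{q}+1$, hence irreducible. Since $C_A$ is not Hermitian, $C_A \neq C_{A^*}$, so the two curves have no common component. Bézout therefore gives total intersection number $(\sqrt{q}+1)^2$ over $\overline{\mathbb{F}}_q$. The inequality in (i) is trivial, because every term in the sum is nonnegative.

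For (ii), let $P$ be an inflexion of $C_A$; it lies in $C_A(\mathbb{F}_q)$ because the statement concerns rational inflexions. By Proposition~\ref{mirror}(i), $P \in C_{A^*}$. By (ii) of the same proposition, $P$ is also an inflexion of $C_{A^*}$, and the two curves have the same tangent line $L$ at $P$. Choose a local parameter $t$ on $C_A$ at $P$ and let $g$ be a local equation of $C_{A^*}$ at $P$. Since both curves are smooth at $P$ with common tangent $L$, write $g = \ell \cdot u + h$, where $\ell$ is the linear form of $L$ and $u$ is a unit. More cleanly, the plan is this. In affine coordinates with $P$ at the origin and $L: y=0$, write $C_{A^*}$ locally as $y = \phi(x)$. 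The inflexion condition $i(C_{A^*}\cdot L;P)=\sqrt{q}+1$ gives $\operatorname{ord}\phi = \sqrt{q}+1$. Likewise $C_A$ is $y=\psi(x)$ with $\operatorname{ord}\psi=\sqrt{q}+1$. Then
\[
i(C_A\cdot C_{A^*};P)=\operatorname{ord}_x(\psi-\phi)\ge \sqrt{q}+1,
\]
and the difference is nonzero because the curves have no common component. This local computation is the only real step. The possible obstacle is making sure the curves share no component, and that was settled in (i).

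For (iii), the point is that every rational point of $C_A$ contributes at least $2$ to the intersection count, or at least that the rational points can be grouped so the total stays below the Bézout bound. Each rational point $P$ lies on both curves by Proposition~\ref{mirror}(i), so $i(C_A\cdot C_{A^*};P)\ge1$. Summing over $C_A(\mathbb{F}_q)$ and applying (i) gives
\[
N_q(C_A)\le \sum_{P\in C_A(\mathbb{F}_q)} i(C_A\cdot C_{A^*};P)\le(\sqrt{q}+1)^2 .
\]
So (iii) follows directly from (i), with no finer analysis of non-inflexions needed.
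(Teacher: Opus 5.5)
Your proof is correct and follows the same route as the paper: Bézout for (i); coinciding tangent lines with contact order $\sqrt{q}+1$ (via Proposition~\ref{mirror}) for (ii); and, for (iii), the bound $i(C_A\cdot C_{A^*};P)\ge 1$ at each rational point combined with (i). You also make explicit two points the paper leaves implicit, namely that $C_A$ and $C_{A^*}$ share no common component and the local power-series computation giving $\operatorname{ord}_x(\psi-\phi)\ge\sqrt{q}+1$. Delete the opening speculation in (iii) that every rational point contributes at least $2$: it is false (for curves of type $(q+2\sqrt{q}+1,0)$ the bound in (iii) is attained, so each rational point contributes exactly $1$), and your argument never uses it.
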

\begin{proof}(i) Since $C_A(\mathbb{F}_q) \subseteq C_A(\overline{\mathbb{F}}_q)$, the inequality follows immediately. The equality is a direct consequence of Bézout's theorem.   (ii) If $P$ is an inflexion of $C_A$, then by Proposition \ref{mirror}, it is also an inflexion of $C_{A^*}$ and their tangent lines at $P$ coincide. Let $\mathcal{L} = \mathcal{L}_P(C_A) = \mathcal{L}_P(C_{A^*})$. By the characterization of inflexions on these curves, we have $i(C_A \cdot \mathcal{L}; P) = \sqrt{q}+1$ and $i(C_{A^*} \cdot \mathcal{L}; P) = \sqrt{q}+1$. This implies that $i(C_A \cdot C_{A^*}; P) \ge \sqrt{q}+1$. (iii) Since $C_A(\mathbb{F}_q) = C_{A^*}(\mathbb{F}_q)$ by Proposition \ref{mirror}, each $P \in C_A(\mathbb{F}_q)$ contributes at least $1$ to the intersection multiplicity $i(C_A \cdot C_{A^*}; P)$. Therefore, $N_q(C_A) \le \sum_{P \in C_A(\mathbb{F}_q)} i(C_A \cdot C_{A^*}; P)$, which is at most $(\sqrt{q}+1)^2$ by (1).
\end{proof}

\begin{theorem}[\cite{hom-kim2024}] \label{IqCge2}
Let $C$ be a Hermitian-relative curve. Suppose that $C$ has at least two rational inflexions. Then $C$ is projectively equivalent to the curve defined by
\begin{equation}\label{at_least_two_inflexions}
x^{\sqrt{q}}y + \eta xy^{\sqrt{q}} + z^{\sqrt{q}+1} = 0
\end{equation}
for some $\eta \in \mathbb{F}_q^{\ast}$. Conversely, any curve defined by \eqref{at_least_two_inflexions} has at least two rational inflexions. These curves are classified into the following three distinct types:
\begin{enumerate}[{\rm (i)}]
\item If $\eta = 1$, then the curve is Hermitian.
\item If $\eta^{\sqrt{q}+1} = 1$ with $\eta \neq 1$, then the curve \eqref{at_least_two_inflexions} has exactly $\sqrt{q}+1$ rational points, all of which are inflexions. In this case, the curve is projectively equivalent to
\begin{equation}\label{diagonaloftype1}
x^{\sqrt{q}+1} + y^{\sqrt{q}+1} + \eta' z^{\sqrt{q}+1} = 0
\end{equation}
for some $\eta' \in \mathbb{F}_q^{\ast} \setminus \mathbb{F}_{\sqrt{q}}^{\ast}$.
\item If $\eta^{\sqrt{q}+1} \neq 1$, then the curve \eqref{at_least_two_inflexions} has exactly $q+1$ rational points, of which exactly two are inflexions.
\end{enumerate}
Furthermore, this classification results in three distinct types of curves, as summarized in Table~\ref{Iqcge2}.
\end{theorem}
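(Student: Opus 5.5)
The plan is to work directly with the matrix form of the curve and use Lemma~\ref{cong} to normalize $A$ by congruence. Write $r=\sqrt{q}$ and let $\sigma$ denote the Frobenius $a\mapsto a^{r}$, an involution of $\mathbb{F}_q$.

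\textbf{Step 1: local description at a rational point.} For a rational point $P$ with coordinate vector $v$, the tangent line to $C_A$ at $P$ is $v^*A\,X=0$. This holds because the partial derivatives of $X^{(r)}AX$ are given by the linear part $X^{(r)}A$, and at a rational point $X^{(r)}=v$. Restricting the form to a line through $P$ parametrized by $v+tw$ gives
\[
(v+t^{r}w^{(r)})^{t}A(v+tw)=t\,v^*Aw+t^{r}\,w^*Av+t^{r+1}\,w^*Aw .
\]
Take $w$ on the tangent line, so $v^*Aw=0$. Then $P$ is an inflexion exactly when $w^*Av=0$ as well, that is, when the tangent of $C_{A^*}$ at $P$ coincides with that of $C_A$.

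\textbf{Step 2: two rational inflexions give the normal form.} Let $P_1=e_1$ and $P_2=e_2$ be two rational inflexions. The line through them is not tangent at either point. If it were tangent at $P_1$, its intersection divisor would be $(r+1)P_1$, contradicting $P_2\in C$. Hence $T_{P_1}$ and $T_{P_2}$ are distinct rational lines, and I place $z=0$ through their intersection point $e_3$.

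In these coordinates the rows and columns of $A$ satisfy:
\begin{itemize}
\item $e_i^*Ae_i=0$ for $i=1,2$, since $P_1,P_2\in C$;
\item the tangent at $P_1$ is $v^*AX=0$, and by inflexion symmetry also $X^*Av=0$, and this line passes through $e_2$ and $e_3$; hence $a_{12}$, $a_{13}$, $a_{21}$, $a_{31}$ are tied to this tangency condition;
\item the analogous statements hold at $P_2$.
\end{itemize}
Choosing $e_3$ as the pole of the chord $P_1P_2$, the tangents become $y=0$ and $x=0$, and $A$ reduces to
\[
A=\begin{pmatrix}0&a&0\\ b&0&0\\ 0&0&c\end{pmatrix}.
\]
Scaling $x$, $y$, $z$ by diagonal $T$ multiplies the entries by norms $t^{r+1}$ and twisted products. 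Since the norm map onto $\mathbb{F}_{\sqrt q}^*$ is surjective and $\mathbb{F}_q^*$ absorbs the remaining scalars, I can achieve $c=1$ and $a=1$, which gives $x^{r}y+\eta xy^{r}+z^{r+1}$. This bookkeeping of which entries vanish is the main obstacle. I expect it to need care in showing that $a_{33}\neq0$ and that the off-diagonal $z$-entries can be cleared by a shear that fixes $P_1$ and $P_2$. The converse is immediate: $(1:0:0)$ and $(0:1:0)$ lie on the curve with tangents $y=0$ and $x=0$, which meet the curve only at those points.

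\textbf{Step 3: counting points and inflexions.} For $z=0$ the only points are $P_1$ and $P_2$. For $z=1$, set $u=x^{r}y$, so that $xy^{r}=u^{r}$ when $x,y\in\mathbb{F}_q$. The equation becomes $u+\eta u^{r}=-1$. Applying $\sigma$ and eliminating gives $(1-\eta^{r+1})u=\eta-1$ (up to sign conventions). There are three cases.
\begin{itemize}
\item If $\eta=1$, the equation is $\operatorname{Tr}(u)=-1$, which gives the Hermitian curve.
\item If $\eta^{r+1}\neq1$, there is a unique $u\neq0$. The number of rational points is then $2+\#\{(x,y):x^{r}y=u\}=2+(q-1)=q+1$.
\item If $\eta^{r+1}=1$ and $\eta\neq1$, there is no $u$, so the only rational points are $P_1$ and $P_2$.
\end{itemize}

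The last case contradicts the claimed count $r+1$, so I must recheck the $z=1$ analysis. Indeed $x^{r}y\in\mathbb{F}_q$ is fine, but $z=0$ also allows $x^{r}y+\eta xy^{r}=0$ with $xy\neq0$. This gives $(x/y)^{r-1}=-\eta$. When $\eta^{r+1}=1$, $-\eta$ is an $(r-1)$-th power, so there are $r-1$ extra points. The total is therefore $r+1$ in case (ii), while in case (iii) there are none, giving $q+1$.

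For the inflexions, I apply the criterion of Step 1: compute $v^*A$ and $A v$ up to scalar. For $A$ as above this amounts to checking $(y^{r},x^{r},z^{r})$ against $(\eta y,x,z)$ up to scale. This holds for the $z=0$ points in case (ii). In case (iii) it fails at the $z=1$ points, which leaves exactly the two inflexions. Finally, I diagonalize case (ii) by a substitution $x\mapsto x+\lambda y$ in order to reach \eqref{diagonaloftype1}.
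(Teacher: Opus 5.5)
The paper does not prove this theorem (it is quoted from \cite{hom-kim2024}), so I am judging your argument on its own terms. Steps 1--3 are essentially sound once a few slips are repaired:
\begin{itemize}
\item At a rational $v$ one has $X^{(\sqrt q)}=v^{(\sqrt q)}$, not $v$. Your tangent formula $v^*AX=0$ is nevertheless right.
\item Take $e_3=\mathcal{L}_{P_1}(C)\cap\mathcal{L}_{P_2}(C)$. Your Step~1 criterion then gives $a_{11}=a_{22}=a_{13}=a_{31}=a_{23}=a_{32}=0$ immediately, and $\det A\neq0$ forces $a_{33}\neq0$. So no shear is needed. Taking $\lambda=a_{33}^{-1}$ and $T=\mathrm{diag}(1,a_{33}a_{12}^{-1},1)$ produces \eqref{at_least_two_inflexions} with $\eta=a_{21}a_{33}^{\sqrt q-1}a_{12}^{-\sqrt q}$.
\item The inflexion test is $A^*v\propto Av$, i.e.\ $(\eta^{\sqrt q}y,x,z)\propto(y,\eta x,z)$, not the vectors you wrote. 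It does give exactly your conclusions.
\end{itemize}

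\textbf{First gap: the diagonalisation in case (ii).} A substitution $x\mapsto x+\lambda y$ fixes $e_1$, so the coefficient of $x^{\sqrt q+1}$ remains $e_1^*Ae_1=0$. The new curve therefore still passes through a coordinate vertex, and no later scaling or permutation of coordinates can change that. By contrast, \eqref{diagonaloftype1} passes through none of the vertices. The missing idea is this: since $\eta^{\sqrt q+1}=1$, Lemma~\ref{Hilbert} lets you write $\eta=\mu^{\sqrt q-1}$. Then $\mu(x^{\sqrt q}y+\eta xy^{\sqrt q})=\mu x^{\sqrt q}y+\mu^{\sqrt q}xy^{\sqrt q}$ is a genuine Hermitian binary form (its matrix equals its conjugate transpose), so it can be orthogonalised:
\begin{itemize}
\item choose $\nu$ with $\mathrm{Tr}(\mu\nu)=1$; the substitution $x=s+t$, $y=\nu s-\eta\nu^{\sqrt q}t$ turns the form into $s^{\sqrt q+1}-t^{\sqrt q+1}$;
\item replace $t$ by $\tau t$ with $\mathrm{Nm}(\tau)=-1$ and multiply the whole equation by $\mu$; this gives \eqref{diagonaloftype1} with $\eta'=\mu$.
\end{itemize}
You also never check that $\eta'\notin\mathbb{F}_{\sqrt q}$, which is part of the claim. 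Here it holds because $\mu^{\sqrt q-1}=\eta\neq1$.

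\textbf{Second gap: the class counts.} Through Table~\ref{Iqcge2}, the statement also asserts how many projective equivalence classes each type contains: $1$, $\sqrt q$ and $\frac12(\sqrt q+1)(\sqrt q-2)$. Your proposal never asks when two values of $\eta$ give equivalent curves. Two ingredients are needed.
\begin{itemize}
\item Interchanging $x$ and $y$ and renormalising as in Step~2 sends $\eta$ to $\eta^{-\sqrt q}$.
\item Conversely, if $B=\lambda T^*AT$ then $B^{-1}B^*=\lambda^{\sqrt q-1}T^{-1}(A^{-1}A^*)T$. For the normal form, $A^{-1}A^*=\mathrm{diag}(\eta^{-1},\eta^{\sqrt q},1)$. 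So the eigenvalue multiset is an invariant up to a common factor $\zeta$ with $\zeta^{\sqrt q+1}=1$.
\end{itemize}
In case (ii) the eigenvalues are $\eta^{-1},\eta^{-1},1$. The simple one forces $\zeta=1$, so $\eta$ itself is an invariant and there are $\sqrt q$ classes. In case (iii) the eigenvalues are distinct and only $1$ is a $(\sqrt q+1)$-th root of unity, which again forces $\zeta=1$. Hence $\{\eta^{-1},\eta^{\sqrt q}\}$ is invariant and $\eta$ is determined up to $\eta\mapsto\eta^{-\sqrt q}$. This is a fixed-point-free involution on the $q-\sqrt q-2$ admissible values, giving $\frac12(\sqrt q+1)(\sqrt q-2)$ classes.
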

\begin{table}[h]

\centering

\caption{Classification of curves with $I_q(C) \ge 2$} \label{Iqcge2}

\begin{tabular}{|c|c|c|c|}

\hline

 & $N_q(C_A)$ & $I_q(C)$ & Number of classes \\

\hline

(a) & $\sqrt{q}^3 + 1$ & $\sqrt{q}^3 + 1$ & $1$ \\

\hline

(b) & $\sqrt{q} + 1$ & $\sqrt{q} + 1$ & $\sqrt{q}$ \\

\hline

(c) & $q + 1$ & $2$ & $\frac{1}{2}(\sqrt{q} + 1)(\sqrt{q} - 2)$ \\

\hline

\end{tabular}

\end{table}
The trace and norm maps from $\mathbb{F}_q$ to $\mathbb{F}_{\sqrt{q}}$ are denoted simply by $\mathrm{Tr}$ and $\mathrm{Nm}$, respectively. Note that $\mathrm{Tr}$ is an $\mathbb{F}_{\sqrt{q}}$-linear map, whereas $\mathrm{Nm}$ induces a multiplicative homomorphism from $\mathbb{F}_q^*$ to $\mathbb{F}_{\sqrt{q}}^*$.
\begin{lemma}[Hilbert's Theorem 90]\label{Hilbert}
The kernels of the trace and norm maps are given as follows:
\begin{enumerate}[{\rm (i)}]
\item $\rm{Ker}(\mathrm{Tr}) = \{ \alpha - \alpha^{\sqrt{q}} \mid \alpha \in \mathbb{F}_q \}$.
\item $\rm{Ker}(\mathrm{Nm}) = \{ \alpha^{1-\sqrt{q}} \mid \alpha \in \mathbb{F}_q^{\ast} \}$.
\end{enumerate}
\end{lemma}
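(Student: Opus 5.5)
Since $\mathbb{F}_q/\mathbb{F}_{\sqrt{q}}$ is a cyclic extension of degree $2$ whose Galois group is generated by the Frobenius $\sigma(\alpha)=\alpha^{\sqrt{q}}$, I would give a direct counting proof rather than citing the general theorem. I treat the norm and the trace in turn; in both cases the plan is to show the easy inclusion and then compare cardinalities.

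For (ii), first observe that every $\alpha^{1-\sqrt{q}}$ lies in the kernel:
\[
\mathrm{Nm}(\alpha^{1-\sqrt{q}}) = \alpha^{(1-\sqrt{q})(1+\sqrt{q})} = \alpha^{1-q} = 1,
\]
because $\alpha^{q-1}=1$ for $\alpha\in\mathbb{F}_q^*$. Next I compare sizes.
\begin{itemize}
\item The norm $\alpha\mapsto\alpha^{\sqrt{q}+1}$ is a homomorphism of the cyclic group $\mathbb{F}_q^*$ of order $q-1=(\sqrt{q}-1)(\sqrt{q}+1)$. Its kernel is the unique subgroup of order $\sqrt{q}+1$.
\item The map $\alpha\mapsto\alpha^{1-\sqrt{q}}$ is also a homomorphism of $\mathbb{F}_q^*$. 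Its kernel consists of the $\alpha$ with $\alpha^{\sqrt{q}-1}=1$, which is exactly $\mathbb{F}_{\sqrt{q}}^*$, of order $\sqrt{q}-1$.
\item Hence the image of $\alpha\mapsto\alpha^{1-\sqrt{q}}$ has order $(q-1)/(\sqrt{q}-1)=\sqrt{q}+1$.
\end{itemize}
Since the image is contained in the kernel of the norm and both have order $\sqrt{q}+1$, they are equal.

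For (i), the containment is again immediate:
\[
\mathrm{Tr}(\alpha-\alpha^{\sqrt{q}}) = \alpha+\alpha^{\sqrt{q}}-\alpha^{\sqrt{q}}-\alpha^{q} = 0,
\]
since $\alpha^q=\alpha$. Then I compare dimensions over $\mathbb{F}_{\sqrt{q}}$.
\begin{itemize}
\item The map $\alpha\mapsto\alpha-\alpha^{\sqrt{q}}$ is $\mathbb{F}_{\sqrt{q}}$-linear. Its kernel is $\mathbb{F}_{\sqrt{q}}$, so its image has dimension $1$ over $\mathbb{F}_{\sqrt{q}}$, i.e.\ size $\sqrt{q}$.
\item The trace $\alpha\mapsto\alpha+\alpha^{\sqrt{q}}$ is given by a polynomial of degree $\sqrt{q}$, so it has at most $\sqrt{q}$ roots. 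Its image lies in $\mathbb{F}_{\sqrt{q}}$.
\item The trace is not identically zero, since a nonzero polynomial of degree $\sqrt{q}<q$ cannot vanish on all of $\mathbb{F}_q$. So the trace is surjective onto $\mathbb{F}_{\sqrt{q}}$, and its kernel has size exactly $\sqrt{q}$.
\end{itemize}
The image of $\alpha\mapsto\alpha-\alpha^{\sqrt{q}}$ is contained in this kernel and has the same size $\sqrt{q}$, so the two sets coincide.

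There is no real obstacle in this argument. The only points needing care are the surjectivity of the trace and the identification of the kernel of $\alpha\mapsto\alpha^{1-\sqrt{q}}$ with $\mathbb{F}_{\sqrt{q}}^*$, both of which follow from root counting as above.
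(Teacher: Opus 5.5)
Your proof is correct. The paper, however, gives no proof of this lemma: it simply invokes the classical Hilbert's Theorem 90, in its multiplicative and additive forms, for the cyclic quadratic extension $\mathbb{F}_q/\mathbb{F}_{\sqrt{q}}$, whose Galois group is generated by $\alpha\mapsto\alpha^{\sqrt{q}}$.

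Your route is a self-contained finite-field argument. You prove the easy inclusions and then compare cardinalities, using the cyclicity of $\mathbb{F}_q^{\ast}$ for the norm and $\mathbb{F}_{\sqrt{q}}$-linear algebra with root counting for the trace.

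\begin{itemize}
\item The general theorem applies to any cyclic extension, including infinite fields where no counting is possible. Its standard proof constructs a preimage directly, using Dedekind's independence of characters.
\item Your counting proof is more elementary. It also yields, as a by-product, the surjectivity of $\mathrm{Tr}\colon\mathbb{F}_q\to\mathbb{F}_{\sqrt{q}}$. Likewise, $|\mathrm{Ker}(\mathrm{Nm})|=\sqrt{q}+1$ forces $|\mathrm{Nm}(\mathbb{F}_q^{\ast})|=\sqrt{q}-1$, which gives the surjectivity of $\mathrm{Nm}\colon\mathbb{F}_q^{\ast}\to\mathbb{F}_{\sqrt{q}}^{\ast}$. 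The paper uses both surjectivity facts later without proof, e.g.\ in the proof of Theorem~\ref{1pt}.
\end{itemize}

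One small point to make explicit: your step ``not identically zero, so surjective'' relies on $\mathrm{Tr}$ being $\mathbb{F}_{\sqrt{q}}$-linear. Alternatively, your bound $|\mathrm{Ker}(\mathrm{Tr})|\le\sqrt{q}$, together with $\mathrm{Tr}(\mathbb{F}_q)\subseteq\mathbb{F}_{\sqrt{q}}$, already gives $|\mathrm{Ker}(\mathrm{Tr})|=\sqrt{q}$ by the first isomorphism theorem for additive groups.
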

\begin{proposition}[\cite{hom-kim2024}] \label{degsqrtq}
Let $\alpha \in \mathbb{F}_q^{\ast}$ and $\beta \in \mathbb{F}_q$,
and consider the equation in $X$:
\begin{equation}\label{sq10}
X^{\sqrt{q}} + \alpha X + \beta =0.
\end{equation}
\begin{enumerate}[{\rm(i)}]
 \item When $\mathrm{Nm} (\alpha) =1$, 
$(\ref{sq10})$ has a solution in $\mathbb{F}_q$ if and only if
$\beta =0$ or $\alpha = \beta^{1-\sqrt{q}}$ (which is equivalent to $\alpha^{\sqrt{q}}=\beta^{\sqrt{q}-1}$).
In this case, \eqref{sq10} possesses $\sqrt{q}$ distinct roots, all of which are $\mathbb{F}_q$-solutions.
 \item If $\mathrm{Nm}( \alpha) \neq 1$, then $(\ref{sq10})$
 has a unique $\mathbb{F}_q$-solution.
 \item
 In particular, when $\alpha=-1$, \eqref{sq10} has a solution if and only if $\mathrm{Tr}(\beta) =0$.
\end{enumerate}
\end{proposition}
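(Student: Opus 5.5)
Write $s=\sqrt{q}$ and consider the map $L\colon \mathbb{F}_q\to\mathbb{F}_q$, $L(X)=X^{s}+\alpha X$. Since $X\mapsto X^s$ is additive and fixes $\mathbb{F}_s$, $L$ is $\mathbb{F}_s$-linear on the $2$-dimensional $\mathbb{F}_s$-space $\mathbb{F}_q$. Equation \eqref{sq10} says $L(X)=-\beta$. So the plan is to compute $\ker L$ and $\operatorname{Im} L$ and read off all three statements. Distinctness of roots is automatic: the derivative of $X^s+\alpha X+\beta$ is the nonzero constant $\alpha$, because $s$ is a power of $p$.

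\textbf{Kernel.} A nonzero $X$ lies in $\ker L$ iff $X^{s-1}=-\alpha$.
\begin{itemize}
\item The elements $X^{s-1}=(X^{-1})^{1-s}$ with $X\in\mathbb{F}_q^*$ are exactly $\mathrm{Ker}(\mathrm{Nm})$, by Lemma~\ref{Hilbert}(ii).
\item We have $\mathrm{Nm}(-\alpha)=(-1)^{s+1}\alpha^{s+1}=\mathrm{Nm}(\alpha)$. Indeed, $s+1$ is even when $p$ is odd, and $-1=1$ when $p=2$.
\end{itemize}
Hence $\ker L\neq 0$ iff $\mathrm{Nm}(\alpha)=1$.

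If $\mathrm{Nm}(\alpha)\neq 1$, then $L$ is bijective on $\mathbb{F}_q$, so \eqref{sq10} has exactly one $\mathbb{F}_q$-solution. This proves (ii).

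If $\mathrm{Nm}(\alpha)=1$, then $\ker L$ consists of $0$ together with the $s-1$ roots of $X^{s-1}=-\alpha$, which all lie in $\mathbb{F}_q$. So $\ker L$ has $s$ elements, i.e. $\mathbb{F}_s$-dimension $1$, and $\operatorname{Im}L$ is also an $\mathbb{F}_s$-line. Consequently, whenever \eqref{sq10} is solvable, its solution set is a coset of $\ker L$: it has $s$ elements, all in $\mathbb{F}_q$. These are then all the $s$ distinct roots.

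\textbf{Image (the key step).} It remains to identify $\operatorname{Im}L$ when $\mathrm{Nm}(\alpha)=1$. Take $\gamma=X^s+\alpha X$. Then
\[
\gamma^s=X^q+\alpha^sX^s=X+\alpha^sX^s=\alpha^s\bigl(X^s+\alpha^{-s}X\bigr)=\alpha^s\gamma,
\]
using $\alpha^{-s}=\alpha$, which is exactly $\alpha^{s+1}=1$. So every nonzero $\gamma\in\operatorname{Im}L$ satisfies $\gamma^{s-1}=\alpha^s$. This equation has at most $s-1$ roots, and $\operatorname{Im}L\setminus\{0\}$ has exactly $s-1$ elements. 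Therefore
\[
\operatorname{Im}L=\{0\}\cup\{\gamma\in\mathbb{F}_q^{*} : \gamma^{s-1}=\alpha^s\}.
\]
Now apply this to $\gamma=-\beta$. We have $(-\beta)^{s-1}=\beta^{s-1}$, since $s-1$ is even for odd $p$ and the sign is irrelevant for $p=2$. So solvability is equivalent to $\beta=0$ or $\alpha^s=\beta^{s-1}$.

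The last condition is equivalent to $\alpha=\beta^{1-s}$. To see this, raise $\alpha^s=\beta^{s-1}$ to the $s$-th power and use $\alpha^q=\alpha$, $\beta^q=\beta$: this gives $\alpha=\beta^{s^2-s}=\beta^{q-s}=\beta^{1-s}$. The converse direction is the same computation reversed. This proves (i).

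\textbf{Case $\alpha=-1$.} Here $\mathrm{Nm}(-1)=1$, and the description above reads $\operatorname{Im}L=\{\gamma : \gamma^s=(-1)^s\gamma\}$.
\begin{itemize}
\item For odd $p$ this is $\{\gamma : \gamma^s+\gamma=0\}=\mathrm{Ker}(\mathrm{Tr})$.
\item For $p=2$ it is $\{\gamma : \gamma^s=\gamma\}=\{\gamma : \gamma^s+\gamma=0\}=\mathrm{Ker}(\mathrm{Tr})$ again.
\end{itemize}
Since $\mathrm{Ker}(\mathrm{Tr})$ is closed under negation, $-\beta\in\operatorname{Im}L$ iff $\mathrm{Tr}(\beta)=0$. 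This proves (iii). It is also consistent with Lemma~\ref{Hilbert}(i), since in this case $L(X)=X^s-X$ and $X^s-X=-(X-X^s)$.

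The only non-formal point in the whole argument is the identity $\gamma^s=\alpha^s\gamma$ on $\operatorname{Im}L$, which uses $\mathrm{Nm}(\alpha)=1$ essentially. Everything else is linear algebra over $\mathbb{F}_s$ together with counting.
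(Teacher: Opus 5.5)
Your proof is correct and complete. The following steps all check out:
\begin{itemize}
\item the $\mathbb{F}_{\sqrt q}$-linearity of $L$;
\item the kernel computation via Lemma~\ref{Hilbert}(ii), together with $\mathrm{Nm}(-\alpha)=\mathrm{Nm}(\alpha)$;
\item the identity $\gamma^{\sqrt q}=\alpha^{\sqrt q}\gamma$ on $\operatorname{Im}L$, combined with the count $|\operatorname{Im}L\setminus\{0\}|=\sqrt q-1$;
\item the sign bookkeeping for $p=2$ versus $p$ odd.
\end{itemize}
You leave one assertion unjustified: that all $\sqrt q-1$ roots of $X^{\sqrt q-1}=-\alpha$ lie in $\mathbb{F}_q$. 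This is immediate. Lemma~\ref{Hilbert}(ii) supplies one root $X_0\in\mathbb{F}_q^*$, and the elements $X_0\zeta$ with $\zeta\in\mathbb{F}_{\sqrt q}^*$ give $\sqrt q-1$ distinct ones. Alternatively, $\ker L$ is a nonzero $\mathbb{F}_{\sqrt q}$-subspace contained in the root set of a polynomial of degree $\sqrt q$, so it has exactly $\sqrt q$ elements.

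The paper does not prove this proposition; it quotes it from \cite{hom-kim2024}, so there is no in-text argument to compare yours against. However, the manipulation the paper performs when it applies the result, in the proof of Theorem~\ref{1pt}, points to the other natural route:
\begin{itemize}
\item write $-\alpha=\rho^{1-\sqrt q}$;
\item substitute $Y=\rho X$ to get $Y^{\sqrt q}-Y+\rho^{\sqrt q}\beta=0$;
\item invoke Lemma~\ref{Hilbert}(i).
\end{itemize}
On that route (iii) is the base case. Part (i) then follows by rewriting $\mathrm{Tr}(\rho^{\sqrt q}\beta)=0$ as $\beta=0$ or $\beta^{\sqrt q-1}=-\rho^{\sqrt q-1}=\alpha^{\sqrt q}$. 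That route only covers the norm-one case, and it would still need your kernel argument for (ii).

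Your approach inverts this. Identifying $\operatorname{Im}L$ directly makes (iii) a corollary. It also treats (i)--(iii) uniformly through a single linear map: the uniqueness in (ii) and the root count in (i) both come from rank--nullity over $\mathbb{F}_{\sqrt q}$.
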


In this section, we classify all curves for which every $\mathbb{F}_q$-rational point is an inflexion. Having already classified the cases where $I_q(C) \ge 2$ in Theorem \ref{IqCge2}, the remaining task is to examine the situation where $I_q(C) \le 1$. Under the condition $N_q(C) = I_q(C)$, and considering that $N_q(C) \ge 1$ for any Hermitian-relative curve, the only remaining possibility is the case $N_q(C) = I_q(C) = 1$. By proving the existence of such curves, the classification of all curves satisfying $N_q(C) = I_q(C)$ will be completed. In what follows, we show that curves of type $(1, 1)$ indeed exist.
\begin{theorem}\label{1pt}
For a Hermitian-relative curve $C$, the following statements are equivalent:
\begin{enumerate}[{\rm (a)}]
\item $N_q(C) = 1$.
\item $C$ is of type $(1,1)$.
\item $C$ is projectively equivalent to the curve $C_{A_\eta}$ defined by the equation
\begin{equation}\label{eq1pt}
x^{\sqrt{q}+1} + \eta y^{\sqrt{q}+1} + y^{\sqrt{q}} z + yz^{\sqrt{q}} = 0
\end{equation}
for some $\eta \in \mathbb{F}_q \setminus \mathbb{F}_{\sqrt{q}}$.
\end{enumerate}
Furthermore, any two curves satisfying these equivalent conditions are projectively equivalent.
\end{theorem}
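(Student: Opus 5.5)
The plan is to prove the cycle (a)$\Rightarrow$(b)$\Rightarrow$(a), then (c)$\Rightarrow$(a) by direct counting, then (a)$\Rightarrow$(c) by a normal-form reduction using Lemma~\ref{cong}, and finally uniqueness by exhibiting explicit congruences between the curves $C_{A_\eta}$. Throughout I write $s=\sqrt{q}$.

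\textbf{The equivalence of (a) and (b).} (b)$\Rightarrow$(a) is immediate. For (a)$\Rightarrow$(b), let $P$ be the unique rational point. If $P$ were not an inflexion, Proposition~\ref{mirror}(iii) would give a rational point $P'\neq P$ on $C$, which contradicts $N_q(C)=1$. Hence $I_q(C)=1$.

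\textbf{(c)$\Rightarrow$(a).} I count the rational points of \eqref{eq1pt}.
\begin{itemize}
\item If $y=0$, then $x^{s+1}=0$, which gives only $(0:0:1)$.
\item If $y=1$, the equation becomes $x^{s+1}+z^s+z=-\eta$. Here $x^{s+1}=\mathrm{Nm}(x)$ and $z^s+z=\mathrm{Tr}(z)$ both lie in $\mathbb{F}_{s}$, so a solution would force $\eta\in\mathbb{F}_s$. This is excluded.
\end{itemize}
So $N_q(C_{A_\eta})=1$.

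\textbf{(a)$\Rightarrow$(c): the main obstacle.} By (b), the unique point $P$ is a rational inflexion. I change coordinates over $\mathbb{F}_q$ so that $P=(0:0:1)$ and the rational tangent line is $y=0$.
\begin{itemize}
\item The curve passes through $P$, so $a_{33}=0$.
\item Differentiating $F=\sum a_{ij}x_i^{s}x_j$ in characteristic $p$, the tangent line of $C_A$ at $P$ is given by the third row of $A$. The tangent line of the mirror curve $C_{A^*}$ at $P$ is given by the third column of $A$, conjugated.
\item By Proposition~\ref{mirror}(ii) these two lines coincide and equal $y=0$. Hence $a_{31}=a_{13}=0$, while $a_{32}\ne0$ and $a_{23}\neq 0$.
\end{itemize}
Next I apply congruences $T^*AT$ with $T$ fixing $P$ and the line $y=0$. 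The substitutions are of the forms $z\mapsto z+cx+dy$, $y\mapsto \lambda y$, $x\mapsto \mu x + e y$, together with an overall scalar. The aim is to kill $a_{12}$ and $a_{21}$ and to normalize $a_{23}=a_{32}=1$ and $a_{11}=1$. Making $a_{23}$ and $a_{32}$ simultaneously equal needs a check: scaling $y$ and $z$ multiplies them by $\lambda^{s}\nu$ and $\lambda\nu^{s}$ respectively. I therefore expect to use Hilbert's Theorem~90 (Lemma~\ref{Hilbert}) to adjust their ratio, possibly combined with the global scalar freedom of $C_A=C_{\alpha A}$. Eliminating the mixed $x$–$y$ terms will require solving equations of the shape $X^{s}+\alpha X+\beta=0$, which is the role of Proposition~\ref{degsqrtq}. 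I expect this bookkeeping to be the hardest step.

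Once the normal form \eqref{eq1pt} is reached for some $\eta$, the condition $\eta\notin\mathbb{F}_s$ is forced by the converse of the count above. If $\eta\in\mathbb{F}_s$, choose $z$ with $\mathrm{Tr}(z)=-\eta$ (possible since $\mathrm{Tr}$ is onto $\mathbb{F}_s$) and $x=0$. This gives a second rational point $(0:1:z)$, contradicting $N_q(C)=1$.

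\textbf{Uniqueness.} The substitution $z\mapsto z+ty$ changes $\eta$ to $\eta+\mathrm{Tr}(t)$, and $\mathrm{Tr}(t)$ ranges over all of $\mathbb{F}_s$. The substitution $y\mapsto\lambda y$, $z\mapsto \lambda^{-s}z$ keeps the coefficients of $y^s z$ and $yz^s$ equal to $1$, because $\lambda^{1-q}=1$, and turns $\eta$ into $\eta\,\mathrm{Nm}(\lambda)$. Rescaling $x$ by $\mu$ and taking an overall factor restores the coefficient of $x^{s+1}$. Thus the affine group $\{\eta\mapsto a\eta+b : a\in\mathbb{F}_s^*,\ b\in\mathbb{F}_s\}$ acts on the admissible values of $\eta$. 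This action is transitive on $\mathbb{F}_q\setminus\mathbb{F}_s$, since $\mathbb{F}_q=\mathbb{F}_s(\eta)$ means every $\eta'\notin\mathbb{F}_s$ can be written as $a\eta+b$ with $a\ne0$. Hence all curves satisfying the equivalent conditions are projectively equivalent.
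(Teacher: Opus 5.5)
\textbf{There is a genuine gap in (a)$\Rightarrow$(c).} Your arguments for (a)$\Leftrightarrow$(b), for (c)$\Rightarrow$(a) and for the final uniqueness statement are correct and essentially the paper's. Your derivation of $a_{13}=a_{31}=0$ from the coinciding tangent lines of $C_A$ and $C_{A^*}$ is a fine substitute for the paper's restriction to $y=0$.

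The problem in (a)$\Rightarrow$(c) is that the target normal form $a_{11}=a_{23}=a_{32}=1$, $a_{12}=a_{21}=0$ cannot in general be reached by congruences. It is not reachable from an arbitrary matrix with a rational inflexion at $P=(0:0:1)$ and tangent $y=0$, so this is not bookkeeping. Write $s=\sqrt q$ as you do. Every $T$ fixing $P$ and the line $y=0$ has the form $T=\begin{pmatrix} t_{11}&t_{12}&0\\ 0&\lambda&0\\ t_{31}&t_{32}&\nu\end{pmatrix}$. Including a global scalar $\alpha$, one finds
$a_{11}\mapsto\alpha\,\mathrm{Nm}(t_{11})a_{11}$, $a_{23}\mapsto\alpha\lambda^{s}\nu\, a_{23}$ and $a_{32}\mapsto\alpha\nu^{s}\lambda\, a_{32}$.
\begin{itemize}
\item Hence $I=a_{23}a_{11}^{s-1}a_{32}^{-s}$ is invariant. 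It equals $1$ in your target form, and no use of Hilbert's Theorem~90 can change it.
\item Once $a_{11}=a_{23}=a_{32}=1$, the element $a_{21}-a_{12}^{s}$ is only multiplied by a nonzero factor. So killing both mixed terms is not a free normalization either.
\end{itemize}
None of your normalization steps uses that $P$ is the \emph{only} rational point, so they would apply verbatim to $x^{s}y+\eta xy^{s}+z^{s+1}=0$ with $\eta\neq1$ from Theorem~\ref{IqCge2}. There $(1:0:0)$ is a rational inflexion with tangent $y=0$. Swapping $x$ and $z$ gives the matrix $\begin{pmatrix}1&0&0\\0&0&\eta\\0&1&0\end{pmatrix}$, with $I=\eta\neq1$. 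That curve has $\sqrt q+1$ or $q+1$ rational points. By contrast, every curve of the shape \eqref{eq1pt} with any $\eta\in\mathbb{F}_q$ has $1$ or $q\sqrt q+1$ rational points (it is Hermitian when $\eta\in\mathbb{F}_{\sqrt q}$). So no such reduction can exist.

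\textbf{The missing idea} is that $N_q(C)=1$ must itself force $I=1$ and the vanishing of the mixed term; it is not only needed at the end to get $\eta\notin\mathbb{F}_{\sqrt q}$. The paper first makes the reductions that are always available, $a_{11}=a_{32}=1$ and $a_{12}=0$, so that $I=b_{23}$. It then uses that $x^{s+1}+b_{21}x+b_{22}+b_{23}z+z^{s}=0$ has no solution in $\mathbb{F}_q^2$.
\begin{itemize}
\item Fix $x_0$ and set $g(x_0)=x_0^{s+1}+b_{21}x_0+b_{22}$. The equation $z^{s}+b_{23}z+g(x_0)=0$ has no root, so Proposition~\ref{degsqrtq}(ii) gives $\mathrm{Nm}(b_{23})=1$. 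Lemma~\ref{Hilbert} then gives $-b_{23}=\rho^{1-s}$.
\item Putting $w=\rho z$ and applying Proposition~\ref{degsqrtq}(iii) yields $\mathrm{Tr}(\rho^{s}g(x_0))\neq0$ for every $x_0\in\mathbb{F}_q$.
\item Suppose $\mathrm{Tr}(\rho)\neq0$. Multiplying by $\mathrm{Tr}(\rho)$ and completing the norm writes this expression as $\mathrm{Nm}(\ell(x_0))-\kappa$, with $\ell$ affine-linear and $\kappa\in\mathbb{F}_{\sqrt q}$. Surjectivity of $\mathrm{Nm}$ then produces an $x_0$ where it vanishes, a contradiction.
\item Hence $\mathrm{Tr}(\rho)=0$, which is exactly $b_{23}=1$.
\item Only now does $b_{21}=0$ follow. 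Otherwise $x_0=-b_{22}/b_{21}$ reduces the equation to $\mathrm{Tr}(z)=-\mathrm{Nm}(x_0)$, which is solvable.
\end{itemize}
Your own final step then gives $b_{22}\notin\mathbb{F}_{\sqrt q}$.
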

\begin{proof}
By Proposition \ref{mirror}, if an $\mathbb{F}_q$-rational point $P$ is not an inflexion, then its tangent line must contain another $\mathbb{F}_q$-rational point $P' \in C(\mathbb{F}_q) \setminus \{P\}$. Thus, if $N_q(C) = 1$, the unique $\mathbb{F}_q$-point must be an inflexion, which shows that (a) implies (b). Conversely, (b) implies (a) by definition.

Now assume (c). 
Let $C$ be the curve defined by \eqref{eq1pt}. It is obvious that $P_0 = (0,0,1) \in C(\mathbb{F}_q)$. The tangent line to $C$ at $P_0$ is given by $y=0$, which contacts the curve with multiplicity $\sqrt{q}+1$. Thus, $P_0$ is an $\mathbb{F}_q$-rational inflexion of $C$. Now, let $(x_0 , y_0 , z_0) \in C(\mathbb{F}_q)$. Then we have
\begin{equation}\label{eq:trace_norm}x_0^{\sqrt{q}+1} + \eta y_0^{\sqrt{q}+1} + y_0^{\sqrt{q}} z_0 + y_0 z_0^{\sqrt{q}} = 0.
\end{equation}
Note that $x_0^{\sqrt{q}+1} = \mathrm{Nm}(x_0) \in \mathbb{F}_{\sqrt{q}}$ and $y_0^{\sqrt{q}} z_0 + y_0 z_0^{\sqrt{q}} = \mathrm{Tr}(y_0 z_0^{\sqrt{q}}) \in \mathbb{F}_{\sqrt{q}}$. Since $\eta \notin \mathbb{F}_{\sqrt{q}}$, the identity \eqref{eq:trace_norm} forces $y_0^{\sqrt{q}+1} = 0$, which implies $y_0 = 0$. Consequently, $x_0^{\sqrt{q}+1} = 0$, leading to $x_0 = 0$. Therefore, $P_0 = (0,0,1)$ is the unique $\mathbb{F}_q$-rational point of $C$, confirming that (a) holds.

Now we prove that (b) implies (c). 
Let $C$ be a Hermitian-relative curve of type $(1,1)$. We may assume that $P=(0:0:1)$ is the unique $\mathbb{F}_q$-rational point of $C$, which is also an inflexion, and that the line $y=0$ is the tangent line at $P$. Let $A=(a_{ij}) \in PGL(3,\mathbb{F}_q)$ be the matrix corresponding to $C$, i.e., $C=C_A$. Since $(0:0:1) \in C_A$, it follows that $a_{33}=0$. Because the line $y=0$ intersects the curve $C_A$ at $P$ with multiplicity $\sqrt{q}+1$, the polynomial
$$\begin{pmatrix} x^{\sqrt{q}} & 0 & 1 \end{pmatrix} A \begin{pmatrix} x \\ 0 \\ 1 \end{pmatrix} = a_{11}x^{\sqrt{q}+1} + a_{13}x^{\sqrt{q}} + a_{31}x$$
must be a nonzero constant multiple of $x^{\sqrt{q}+1}$. This implies that $a_{11} \ne 0$ and $a_{13}=a_{31}=0$. Since $A\in PGL(3,\mathbb{F}_q)$, we may assume that 
$A=\begin{pmatrix}1&a_{12}&0\\ a_{21}&a_{22}&a_{23}\\ 0&a_{32}&0\end{pmatrix}$. Note that $a_{23} \ne 0$ and $a_{32} \ne 0$ since $\det A \ne 0$.

Let $T_1 = \begin{pmatrix} 1&-a_{12}&0\\ 0&1&0\\0&0&1\end{pmatrix} \in GL(3,\mathbb{F}_q)$ and let$$A_1 = T_1^* A T_1 = \begin{pmatrix} 1 & 0 & 0 \\ a_{21}-a_{12}^{\sqrt{q}} & -a_{12}a_{21}+a_{22} & a_{23} \\ 0 & a_{32} & 0 \end{pmatrix}.$$
Next, let $T_2 = \begin{pmatrix} 1&0&0\\ 0&1&0\\0&0&a_{32}^{-\sqrt{q}}\end{pmatrix} \in GL(3,\mathbb{F}_q)$ and define
$$A_2 = T_2^* A_1 T_2 = \begin{pmatrix} 1 & 0 & 0 \\ b_{21} & b_{22} & b_{23} \\ 0 & 1 & 0 \end{pmatrix}$$
with $b_{21} = a_{21}-a_{12}^{\sqrt{q}}$, $b_{22} = a_{22}-a_{12}a_{21}$, and $b_{23} = a_{23}a_{32}^{-\sqrt{q}}$. Since $A$ and $A_2$ are congruent, the curve $C_A$ is projectively equivalent to $C_{A_2}$ by Lemma~\ref{cong}, where the latter is given by the equation
\begin{equation}
x^{\sqrt{q}+1} + b_{21}xy^{\sqrt{q}} + b_{22}y^{\sqrt{q}+1} + b_{23}y^{\sqrt{q}}z + yz^{\sqrt{q}} = 0.
\end{equation}
By the definitions of $T_1$ and $T_2$, $P=(0,0,1)$ is still the unique $\mathbb{F}_q$-rational point of $C_{A_2}$ with the tangent line $y=0$.

Since $P=(0,0,1)$ is the unique $\mathbb{F}_q$-rational point of $C_{A_2}$, the affine equation
\begin{equation}\label{y=1}
x^{\sqrt{q}+1} + b_{21}x + b_{22} + b_{23}z + z^{\sqrt{q}} = 0
\end{equation}
has no solutions in $\mathbb{F}_q^2$. For  $x_0 \in \mathbb{F}_q$, consider the following equation in $z$:
\begin{equation}\label{zdegsq}
z^{\sqrt{q}} + b_{23}z + (x_0^{\sqrt{q}+1} + b_{21}x_0 + b_{22}) = 0.
\end{equation}
Note that $b_{23} \ne 0$ because $\det A_2 \ne 0$. Since \eqref{zdegsq} has no solutions in $\mathbb{F}_q$, Proposition \ref{degsqrtq} implies that $\mathrm{Nm}(b_{23}) = 1$. Furthermore, as $\mathrm{Nm}(-b_{23}) =  \mathrm{Nm}(b_{23}) = 1$, it follows from Lemma \ref{Hilbert} that there exists an element $\rho \in \mathbb{F}_q^*$ such that \begin{equation} \label{b23} - b_{23} = \rho^{1-\sqrt{q}}.\end{equation}
Substituting this into \eqref{zdegsq}, we obtain
\begin{equation}\label{zdegsqrho}z^{\sqrt{q}} - \rho^{1-\sqrt{q}} z + (x_0^{\sqrt{q}+1} + b_{21} x_0 + b_{22}) = 0,
\end{equation}
which is equivalent to
\begin{equation}\label{zdegsqrhosq}
(\rho z)^{\sqrt{q}} - (\rho z) + \rho^{\sqrt{q}} (x_0^{\sqrt{q}+1} + b_{21} x_0 + b_{22}) = 0.
\end{equation}
Since \eqref{zdegsqrhosq} has no solutions in $\mathbb{F}_q$, Proposition \ref{degsqrtq} implies that $\mathrm{Tr}(\rho^{\sqrt{q}}(x_0^{\sqrt{q}+1} + b_{21} x_0 + b_{22})) \neq 0$ for all $x_0 \in \mathbb{F}_q$. Based on this, we establish the following:\\
\textbf{Claim.} $\rho + \rho^{\sqrt{q}} = 0$.\\
\textit{Proof of Claim.} Suppose, on the contrary, that $\rho + \rho^{\sqrt{q}} \neq 0$. Then it follows that 
\begin{equation}\label{trne0}
(\rho + \rho^{\sqrt{q}})^{\sqrt{q}} \mathrm{Tr}(\rho^{\sqrt{q}}(x_0^{\sqrt{q}+1} + b_{21} x_0 + b_{22})) \neq 0\end{equation}
On the other hand, a direct calculation shows that
\begin{equation}\begin{split}\label{rhoTr}
(\rho &+\rho^{\sqrt{q}})^{\sqrt{q}} \mathrm{Tr}(\rho^{\sqrt{q}}(x_0^{\sqrt{q}+1}+b_{21}x_0+b_{22})) \\
&= (\rho+\rho^{\sqrt{q}})^{\sqrt{q}} \left((\rho+\rho^{\sqrt{q}} )x_0^{\sqrt{q}+1}  + \rho b_{21}^{\sqrt{q}} x_0^{\sqrt{q}} +\rho^{\sqrt{q}} b_{21}x_0 +\rho b_{22}^{\sqrt{q}} +\rho^{\sqrt{q}} b_{22}\right)\\
&= \left((\rho+\rho^{\sqrt{q}} )x_0 + \rho b_{21} \right)^{\sqrt{q}+1} - \left(\rho^{\sqrt{q}+1} b_{21}^{\sqrt{q}+1} - (\rho+\rho^{\sqrt{q}})(\rho b_{22}^{\sqrt{q}}+\rho^{\sqrt{q}} b_{22})\right).
\end{split}\end{equation}
Since the norm map $\mathrm{Nm} : \mathbb{F}_q \to \mathbb{F}_{\sqrt{q}}$ is surjective and$$ \rho^{\sqrt{q}+1} b_{21}^{\sqrt{q}+1}-(\rho+\rho^{\sqrt{q}})(\rho b_{22}^{\sqrt{q}}+\rho^{\sqrt{q}} b_{22}) = \mathrm{Nm}(\rho b_{21}) - \mathrm{Tr}(\rho) \mathrm{Tr}(\rho b_{22}^{\sqrt{q}}) \in \mathbb{F}_{\sqrt{q}}, $$there exists an element $\zeta \in \mathbb{F}_q$ such that $\zeta^{\sqrt{q}+1} = \mathrm{Nm}(\rho b_{21}) - \mathrm{Tr}(\rho) \mathrm{Tr}(\rho b_{22}^{\sqrt{q}})$. Here, $\zeta$ may be zero. If we set $x_0 = \frac{\zeta - \rho b_{21}}{\rho + \rho^{\sqrt{q}}} \in \mathbb{F}_q$, then the expression in \eqref{rhoTr} vanishes, which contradicts \eqref{trne0}.
Thus, we must have $\rho + \rho^{\sqrt{q}} = 0$. This completes the proof of the Claim.

Now, returning to the proof of the Theorem, note that $\rho + \rho^{\sqrt{q}} = 0$ is equivalent to $\rho^{1-\sqrt{q}} = -1$. It then follows from \eqref{b23} that $b_{23} = -\rho^{1-\sqrt{q}} = 1$. Substituting this into \eqref{y=1}, we obtain
\begin{equation}\label{b23=1}
x^{\sqrt{q}+1} + b_{21}x + b_{22} + z + z^{\sqrt{q}} = 0.
\end{equation}
If $b_{21}\ne 0$, substituting $x= - \frac{b_{22}}{b_{21}}$, we have $z+z^{\sqrt{q}} = - (\frac{b_{22}}{b_{21}})^{\sqrt{q}+1}$. Since $- (\frac{b_{22}}{b_{21}})^{\sqrt{q}+1}\in \mathbb{F}_{\sqrt{q}}$ and $\mathrm{Tr}:\mathbb{F}_{{q}}\rightarrow\mathbb{F}_{\sqrt{q}}$ is surjective, there exists an 
 $\mathbb{F}_q$-solution for $z$. This implies the existence of an $\mathbb{F}_q$-rational point on \eqref{b23=1}, which contrads our assumption. Thus we conclude that $b_{21} =0$. So \eqref{b23=1} is reduced to 
\begin{equation}\label{b21=0}
x^{\sqrt{q}+1}  + b_{22} +z  + z^{\sqrt{q}} = 0
\end{equation}
If $b_{22} \in \mathbb{F}_{\sqrt{q}}$, then for any $x_0 \in \mathbb{F}_q$, the value $-(x_0^{\sqrt{q}+1} + b_{22})$ belongs to $\mathbb{F}_{\sqrt{q}}$. By the surjectivity of the trace map, there exists an $\mathbb{F}_q$-solution to \eqref{b21=0}, which contradicts the uniqueness of the point $P$. Thus, $b_{22}$ must be an element of $\mathbb{F}_q \setminus \mathbb{F}_{\sqrt{q}}$. By setting $b_{22} = \eta \in \mathbb{F}_q \setminus \mathbb{F}_{\sqrt{q}}$, we conclude that $A$ is projectively equivalent to
$$ A_2 = \begin{pmatrix} 1 & 0 & 0 \\ 0 & \eta & 1 \\ 0 & 1 & 0 \end{pmatrix}. $$
This matrix corresponds to the curve
$$ C_{A_2} : x^{\sqrt{q}+1} + \eta y^{\sqrt{q}+1} + y^{\sqrt{q}} z + y z^{\sqrt{q}} = 0. $$

Now that the equivalence has been established, we conclude by showing the uniqueness of the curve up to projective equivalence. Let
$$ A_3 = \begin{pmatrix} 1 & 0 & 0 \\ 0 & \eta' & 1 \\ 0 & 1 & 0 \end{pmatrix} \quad \text{with } \eta' \in \mathbb{F}_q \setminus \mathbb{F}_{\sqrt{q}}, $$
and let $C_{A_3}$ be its corresponding curve:
$$ C_3 :\ x^{\sqrt{q}+1} + \eta' y^{\sqrt{q}+1} + y^{\sqrt{q}} z + y z^{\sqrt{q}} = 0. $$
Since $\mathbb{F}_q$ is a two-dimensional vector space over $\mathbb{F}_{\sqrt{q}}$, the set $\{1, \eta\}$ forms a basis for $\mathbb{F}_q$ over $\mathbb{F}_{\sqrt{q}}$. Hence there exist $\alpha, \beta \in \mathbb{F}_{\sqrt{q}}$ with $\alpha \neq 0$ such that $\eta' = \alpha \eta + \beta$. Recall that the norm and trace maps from $\mathbb{F}_q$ to $\mathbb{F}_{\sqrt{q}}$ are surjective. Therefore, there exist $\zeta, \xi \in \mathbb{F}_q$ such that $\zeta^{\sqrt{q}+1} = \alpha$ and $\xi^{\sqrt{q}} + \xi = \beta$. Note that $\alpha \neq 0$ implies $\zeta \neq 0$. Now, let $T$ be the nonsingular matrix defined by
$$ T = \begin{pmatrix} 1 & 0 & 0 \\ 0 & \zeta & 0 \\ 0 & \zeta^{-\sqrt{q}} \xi & \zeta^{-\sqrt{q}} \end{pmatrix}. $$
A direct computation yields:
\begin{equation*}\begin{split}
T^* A_2 T &=
\begin{pmatrix} 1&0&0\\0&\zeta^{\sqrt{q}}&\ \zeta^{-1}\xi^{\sqrt{q}} \\0&0&\zeta^{-1} \end{pmatrix}
\begin{pmatrix} 1&0&0\\0&\eta&1\\0&1&0 \end{pmatrix}
\begin{pmatrix} 1&0&0 \\ 0&\zeta&0 \\ 0&\zeta^{-\sqrt{q}}\xi &\ \zeta^{-\sqrt{q}} \end{pmatrix} \\
&=\begin{pmatrix} 1&0&0\\0&\ \zeta^{\sqrt{q}+1}\eta +\xi^{\sqrt{q}}+\xi&1\\0&\ \ 1 &0 \end{pmatrix}
=\begin{pmatrix} 1&0&0\\0&\eta'&1\\0&1&0 \end{pmatrix}= A_3.
\end{split}\end{equation*}
This shows that the matrices $A_2$ and $A_3$ are congruent. Consequently, the curves $C_{A_2}$ and $C_{A_3}$ are projectively equivalent, which completes the proof of uniqueness.

\end{proof}

As stated at the beginning of this section, having established the existence of curves of type $(1, 1)$ in Theorem \ref{1pt}, we now obtain the following theorem, which fulfills the primary objective of this section.
\begin{theorem}\label{same}There exist Hermitian-relative curves of types $(q\sqrt{q}+1, q\sqrt{q}+1)$, $(\sqrt{q}+1, \sqrt{q}+1)$, and $(1, 1)$. Moreover, if a Hermitian-relative curve $C$ satisfies $N_q(C) = I_q(C)$, then its type must be one of these three cases.
\end{theorem}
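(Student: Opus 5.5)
The plan is to combine Theorem \ref{IqCge2} with Theorem \ref{1pt}, splitting according to the value of the common number $N_q(C)=I_q(C)$.

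\emph{Existence.} The Hermitian curve, i.e.\ case (i) of Theorem \ref{IqCge2} with $\eta=1$, has type $(q\sqrt{q}+1,q\sqrt{q}+1)$. For $(\sqrt{q}+1,\sqrt{q}+1)$ we use case (ii) of Theorem \ref{IqCge2}, so we only need some $\eta\in\mathbb{F}_q^*$ with $\eta^{\sqrt{q}+1}=1$ and $\eta\neq 1$. The kernel of the norm map has order $\sqrt{q}+1\ge 3$, so such an $\eta$ exists. Equivalently, the diagonal curve \eqref{diagonaloftype1} with $\eta'\in\mathbb{F}_q^*\setminus\mathbb{F}_{\sqrt{q}}^*$ works. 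For $(1,1)$, Theorem \ref{1pt} shows that the curve \eqref{eq1pt} has type $(1,1)$ for any $\eta\in\mathbb{F}_q\setminus\mathbb{F}_{\sqrt{q}}$, and this set is nonempty.

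\emph{Exhaustiveness.} Let $C$ satisfy $N_q(C)=I_q(C)=n$. Every Hermitian-relative curve has a rational point, so $n\ge 1$.

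If $n\ge 2$, then $C$ has at least two rational inflexions, and Theorem \ref{IqCge2} applies. The three cases of that theorem have types $(q\sqrt{q}+1,q\sqrt{q}+1)$, $(\sqrt{q}+1,\sqrt{q}+1)$ and $(q+1,2)$. The last has $N_q\neq I_q$ because $q+1>2$, so it is excluded. Hence the type of $C$ is one of the first two.

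If $n=1$, the type is $(1,1)$ directly.

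No step is a genuine obstacle. The only points needing a little care are the existence of a suitable $\eta$ in case (ii), which follows from surjectivity of $\mathrm{Nm}$ (its kernel has $\sqrt{q}+1>1$ elements), and the observation that $q+1\neq 2$ for every admissible $q$.
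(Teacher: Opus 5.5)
Your proposal is correct and is essentially the paper's own argument: existence comes from Theorem~\ref{IqCge2}(i),(ii) and Theorem~\ref{1pt}, and exhaustiveness comes from $N_q(C)\ge 1$ together with the split into $I_q(C)\ge 2$ (where case (iii), of type $(q+1,2)$, is excluded because $q+1>2$) and $N_q(C)=I_q(C)=1$. One harmless overstatement: Theorem~\ref{IqCge2}(ii) says only that each such curve is equivalent to \eqref{diagonaloftype1} for \emph{some} $\eta'$, so your remark that any $\eta'\notin\mathbb{F}_{\sqrt{q}}$ works is not given by that theorem (it is true and easy to check), but you do not need it, since any $\eta\in\mathrm{Ker}(\mathrm{Nm})\setminus\{1\}$ in \eqref{at_least_two_inflexions} already suffices.
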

\section{A Restricted Model for $N_q(C) \neq I_q(C)$}
In the preceding section, we classified curves where every $\mathbb{F}_q$-rational point is an inflexion. We now turn our attention to curves that possess at least one $\mathbb{F}_q$-rational point that is not an inflexion. We begin by deriving a simplified equation for such curves.

As established in Proposition~\ref{mirror}, the intersection of a Hermitian-relative curve $C$ and its tangent line $\mathcal{L}_P C$ at an $\mathbb{F}_q$-rational point $P$ is given by $C \cdot \mathcal{L}_P C = \sqrt{q}P + P'$. Based on this, we define a map $\varphi : C(\mathbb{F}_q) \to C(\mathbb{F}_q)$ by $\varphi(P) = P'$. When the curve is specified as $C_A$ and it is necessary to emphasize its dependence on the matrix $A$, we shall denote this map by $\varphi_A$ instead of $\varphi$. It follows that $P$ is an inflexion point if and only if $\varphi(P) = P$.
\begin{proposition}\label{ext_varphi}
Let $A \in GL(3, \mathbb{F}_q)$. For any point $P \in C_A$, not necessarily an $\mathbb{F}_q$-rational point, there exists a unique point $P' \in C_A$ such that $\mathcal{L}_P C_A \cdot C_A = \sqrt{q}P + P'$. The map $\varphi_A : C_A \to C_A$, defined by $\varphi_A(P) = P'$, can be expressed explicitly as 
\[
\varphi_A(P) = A^{-1}A^* P^{(q)},
\]
where $((x, y, z)^t)^{(q)} = (x^q, y^q, z^q)^t$. Note that if $P$ is an $\mathbb{F}_q$-rational point, then $P^{(q)} = P$, so this expression is consistent with the definition of $\varphi$ in Proposition~\ref{mirror}.
\end{proposition}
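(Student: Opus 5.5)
Set $s=\sqrt{q}$ and write $F(X)=(X^{(s)})^t A X$ for $X=(x,y,z)^t$, where $X^{(s)}$ is the coordinatewise $s$-th power. The plan is to compute the tangent line at an arbitrary point $P\in C_A$ explicitly, restrict $F$ to that line, and read off the residual intersection point.

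\textbf{Tangent line.} In characteristic $p$ the $X^{(s)}$ factor has zero derivative, so $\nabla F(X)=(X^{(s)})^tA$. Hence the tangent line at $P$ is
\[
\mathcal{L}_P C_A=\{Q : (P^{(s)})^t A Q=0\}.
\]
This line is well defined because $A$ is nonsingular and $P\neq 0$; consistently, $C_A$ is nonsingular.

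\textbf{Restriction to the line.} Take $Q$ on $\mathcal{L}_P C_A$ with $Q\neq P$, and parametrize points of the line as $P+tQ$ (together with $Q$ itself at $t=\infty$). Expand
\[
F(P+tQ)=\bigl(P^{(s)}+t^sQ^{(s)}\bigr)^tA\,(P+tQ)=F(P)+t\,(P^{(s)})^tAQ+t^s\,(Q^{(s)})^tAP+t^{s+1}F(Q).
\]
Since $F(P)=0$ and $(P^{(s)})^tAQ=0$, this reduces to
\[
F(P+tQ)=t^s\bigl((Q^{(s)})^tAP+t\,F(Q)\bigr).
\]
So the intersection divisor is $sP$ plus one further point $P'$, counted with multiplicity. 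If $(Q^{(s)})^tAP=0$, then $P'=P$ and $P$ is an inflexion. Otherwise $P'$ is the point $P+t_0Q$ with $t_0=-(Q^{(s)})^tAP/F(Q)$, or $P'=Q$ when $F(Q)=0$. Uniqueness of $P'$ follows because the restricted polynomial has degree $s+1$ and the factor $t^s$ already accounts for $sP$.

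\textbf{Identifying $P'$ without choosing $Q$.} Rather than carry out the computation with $Q$, I would show that $P':=A^{-1}A^*P^{(q)}$ satisfies three properties:
\begin{enumerate}[(1)]
\item $P'$ lies on $\mathcal{L}_P C_A$.
\item $P'$ lies on $C_A$.
\item $P'$ has the correct multiplicity.
\end{enumerate}

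For (1), write $A^*=(A^{(s)})^t$, so $AP'=A^*P^{(q)}$. Then
\[
(P^{(s)})^tAP'=(P^{(s)})^t (A^{(s)})^t P^{(q)}=\bigl((P^{(q)})^tA^{(s)}P^{(s)}\bigr)^t .
\]
This is a scalar, and it equals $F(P)^s=0$, because raising $(P^{(s)})^tAP$ to the $s$-th power gives $(P^{(q)})^tA^{(s)}P^{(s)}$ (entries of $A$ lie in $\mathbb{F}_q$, so $A^{(s)}$ is the conjugate matrix).

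For (2), the quantity $F(P')=(P'^{(s)})^tA^*P^{(q)}$ needs to be simplified using $P'^{(s)}=(A^{(s)})^{-1}A^{(q)\,t}\ldots$. This is the step I expect to be messiest. The cleaner route is to treat (2) and (3) together: choose $Q=P'$ in the restriction formula above, when $P'\neq P$. Then the residual point is $P'$ itself if and only if $F(P')=0$. The residual point lies on both the line and the curve, and it differs from $P$ unless $P$ is an inflexion.

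I would first try to verify $F(P')=0$ by transposing and applying Frobenius: show $(P'^{(s)})^tAP'=\bigl((P^{(q)})^t\cdots\bigr)$ reduces to a power of $F(P)$ or of $(P^{(s)})^tAP'$, both of which vanish. If that manipulation stalls, I would fall back on the over-$\overline{\mathbb{F}}_q$ equivalence with the Hermitian curve to reduce to $A=A^*$ times a transformation. The remaining degenerate case is $P'=P$ as a projective point, which corresponds to an inflexion, where the residual is $P$ again. Consistency with Proposition~\ref{mirror} for $P^{(q)}=P$ is then immediate.
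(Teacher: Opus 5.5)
Your route differs from the paper's. The paper uses Pardini's theorem to write $A=T^*JT$ with $T\in GL(3,\overline{\mathbb{F}}_q)$. It then imports the Hermitian formula $\varphi_J(P)=P^{(q)}$ and conjugates, the only computation being $T^{-1}T^{(q)}=(A^{-1}T^*J)(J(T^*)^{-1}A^*)=A^{-1}A^*$. Existence and uniqueness of $P'$ are inherited from the Hermitian curve.

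Your restriction formula $F(\lambda P+\mu Q)=\mu^{s}\bigl(\lambda\,(Q^{(s)})^tAP+\mu F(Q)\bigr)$ gives existence and uniqueness of $P'$ directly. Add the remark that the two coefficients cannot both vanish, since the smooth, hence irreducible, curve $C_A$ contains no line. Identifying $P'$ is then pure Frobenius bookkeeping. Your argument is therefore self-contained and needs neither Pardini nor the Hermitian case; it reproves the latter with $A=J$. Your fallback is precisely the paper's proof.

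As written, though, the proposal stops at the two steps that carry the content.

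\textbf{First gap: $F(P')=0$ is never verified.} The ``cleaner route'' with $Q=P'$ does not avoid it. By your own formula the residual point is $F(P')P-\bigl((P'^{(s)})^tAP\bigr)P'$, which equals $P'$ only if $F(P')=0$. The check is one line. Since $A^{(q)}=A$, we have $(A^*)^{(s)}=A^t$, hence $(P'^{(s)})^t=(P^{(qs)})^tA(A^*)^{-1}$ and
\[
F(P')=(P^{(qs)})^tA(A^*)^{-1}A^*P^{(q)}=F(P^{(q)})=F(P)^q=0 .
\]
Together with your step (1), this shows $P'$ is the residual point whenever $P'\neq P$ in $\mathbb{P}^2$.

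\textbf{Second gap: the degenerate case is asserted rather than proved.} One direction is automatic: if $P$ is an inflexion, then $P'=P$, because the tangent line meets $C_A$ only at $P$. What you need is the converse, that $A^{-1}A^*P^{(q)}=P$ in $\mathbb{P}^2$ forces $P$ to be an inflexion. Knowing that $P'$ lies on the line and on the curve gives no information when $P'=P$. The converse follows by the same trick as your step (1). If $A^*P^{(q)}=\lambda AP$, then for every $Q\in\mathcal{L}_PC_A$,
\[
(Q^{(s)})^tAP=\lambda^{-1}(Q^{(s)})^tA^*P^{(q)}=\lambda^{-1}\bigl((P^{(s)})^tAQ\bigr)^{s}=0,
\]
so $F(P+tQ)=t^{s+1}F(Q)$ and $P$ is an inflexion.

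With these two computations added, your proof is complete.
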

\begin{proof}
Let $C_J$ be the Hermitian curve defined by $x^{\sqrt{q}+1} + y^{\sqrt{q}}z + yz^{\sqrt{q}} = 0$, which corresponds to the matrix $J = \begin{pmatrix} 1 & 0 & 0 \\ 0 & 0 & 1 \\ 0 & 1 & 0 \end{pmatrix}$. Since $J = J^*$, the curve $C_J$ is Hermitian. Since $C_J$ and $C_A$ are projectively equivalent over $\overline{\mathbb{F}}_q$~\cite{par1986}, there exists a matrix $T \in GL(3, \overline{\mathbb{F}}_q)$ such that $T^* J T = A$. It is well known that the map $\varphi_J$ for $C_J$ is given by $\varphi_J(P) = P^{(q)}$ (see, e.g., \cite[Lemma 3.1]{hom2006}). Since the projective transformation $T$ induces an isomorphism between $C_A$ and $C_J$ while preserving tangency, the map $\varphi_A$ can be expressed via conjugation as:
\begin{equation}\label{varphiAP}
\varphi_A(P) = T^{-1} \varphi_J(T P) = T^{-1} (T P)^{(q)} = T^{-1} T^{(q)} P^{(q)}.
\end{equation}
The relation $T^* J T = A$ implies $T^{-1} = A^{-1} T^* J$. Taking the Hermitian adjoint of $A = T^* J T$ yields $A^* = T^* J T^{(q)}$, or equivalently $T^{(q)} = J (T^*)^{-1} A^*$. Given $J^2 = I$, their product simplifies directly to:

$$T^{-1} T^{(q)} = (A^{-1} T^* J)(J (T^*)^{-1} A^*) = A^{-1} A^*.$$
Thus, we obtain the explicit expression $\varphi_A(P) = A^{-1} A^* P^{(q)}$.
\end{proof}
Next proposition characterizes the inflexions on $C_A$.
\begin{proposition}\label{eigenvector} Let $A \in GL(3, \mathbb{F}_{q})$ and $P \in C_A(\mathbb{F}_{q})$. Then $P$ is an inflexion of $C_A$ if and only if $P$ is an eigenvector of the matrix $A^{-1}A^*$.
\end{proposition}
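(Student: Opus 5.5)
The proof will apply Proposition~\ref{ext_varphi} directly to an $\mathbb{F}_q$-rational point. Let $P \in C_A(\mathbb{F}_q)$ be represented by a nonzero column vector $v \in \mathbb{F}_q^3$. Every coordinate of $v$ lies in $\mathbb{F}_q$, so $v^{(q)} = v$. Proposition~\ref{ext_varphi} then gives $\varphi_A(P) = A^{-1}A^* v$ as a point of $\mathbb{P}^2$. Since $A$ is nonsingular and $A^*$ is nonsingular (its determinant is $\det(A)^{\sqrt{q}} \neq 0$), the vector $A^{-1}A^*v$ is nonzero. Hence it genuinely represents a projective point.

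Next I recall the characterization of inflexions in terms of $\varphi$. By definition, $P$ is an inflexion if and only if $i(C_A \cdot \mathcal{L}_P C_A; P) = \sqrt{q}+1$. This holds if and only if the divisor $\mathcal{L}_P C_A \cdot C_A = \sqrt{q}P + P'$ has $P' = P$, that is, $\varphi_A(P) = P$. This is the observation already recorded after the definition of $\varphi$ at the start of this section. It is consistent with Proposition~\ref{mirror}(iii), which gives $P' \neq P$ exactly when $P$ is not an inflexion.

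Combining the two steps, $P$ is an inflexion if and only if $A^{-1}A^*v$ and $v$ define the same projective point. This means $A^{-1}A^*v = \lambda v$ for some scalar $\lambda$, and $\lambda \neq 0$ because the left-hand side is nonzero. That is exactly the statement that $v$ is an eigenvector of $A^{-1}A^*$. Both directions follow from the chain of equivalences; the eigenvalue $\lambda$ automatically lies in $\mathbb{F}_q$ because $v$ and $A^{-1}A^*$ are defined over $\mathbb{F}_q$.

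The only delicate point is making sure the identity $\varphi_A(P) = A^{-1}A^*P^{(q)}$ is used projectively, not as an equality of vectors. I will therefore state the scalar $\lambda$ explicitly and note that it is nonzero; no further obstacle is expected.
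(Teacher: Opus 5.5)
Your proof is correct and follows essentially the same route as the paper: specialize the formula $\varphi_A(P)=A^{-1}A^*P^{(q)}$ of Proposition~\ref{ext_varphi} to a rational point, where $P^{(q)}=P$, and combine it with the fact that $P$ is an inflexion if and only if $\varphi_A(P)=P$. Your added remarks, that $A^{-1}A^*v\neq 0$ and that $\lambda\in\mathbb{F}_q^*$, simply make explicit what the paper leaves implicit.
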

\begin{proof}
By the definition of the map $\varphi_A$, a point $P$ is an inflexion if and only if $\varphi_A(P) = P$ in $\mathbb{P}^2$. For $P \in C_A( \mathbb{F}_q)$, we have $P^{(q)} = P$, and thus the condition $\varphi_A(P) = P$ is equivalent to $A^{-1}A^* P = \lambda P$ for some $\lambda \in \mathbb{F}_q^*$. This is precisely the condition that $P$ is an eigenvector of $A^{-1}A^*$, which completes the proof.
\end{proof}
\begin{proposition}\label{Cab}
Let $C$ be a Hermitian-relative curve. Then $C$ possesses an $\mathbb{F}_q$-rational point that is not an inflexion if and only if $C$ is projectively equivalent to the curve defined by
\begin{equation}\label{eqCab}
C_{a,b} : x^{\sqrt{q}} z + x y^{\sqrt{q}} + b x z^{\sqrt{q}} + a y z^{\sqrt{q}} = 0,
\end{equation}
where $a, b \in \mathbb{F}_q$ and $a \neq 0$. This curve corresponds to the matrix $\begin{pmatrix}0&0&1\\ 1&0&0\\ b&a&0 \end{pmatrix}$.
\end{proposition}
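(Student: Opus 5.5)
Both directions go through the map $\varphi_A$ of Proposition~\ref{ext_varphi} together with Lemma~\ref{cong}, which lets us move the relevant points to coordinate points by a congruence $A\mapsto T^*AT$.

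\emph{The curve $C_{a,b}$ has a non-inflexion.} With $A=\begin{pmatrix}0&0&1\\ 1&0&0\\ b&a&0\end{pmatrix}$, we first check that $P_0=(0,1,0)$ lies on $C_{a,b}$: the entry $a_{22}=0$, and indeed the equation vanishes at $x=z=0$. Next we find the tangent line at $P_0$. In the affine chart $y=1$ the equation reads $x^{\sqrt q}z+x+bxz^{\sqrt q}+az^{\sqrt q}=0$, whose linear part is $x$, so the tangent line is $x=0$. Restricting to $x=0$ leaves $ayz^{\sqrt q}=0$. Since $a\neq0$, this gives $C_{a,b}\cdot\{x=0\}=\sqrt q\,P_0+(0,0,1)$. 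Hence $\varphi(P_0)=(0,0,1)\neq P_0$, and $P_0$ is a rational non-inflexion.

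\emph{Every curve with a non-inflexion is equivalent to some $C_{a,b}$.} Let $P$ be a rational non-inflexion of $C=C_A$. By Proposition~\ref{mirror}(iii), $P'=\varphi(P)$ is a rational point different from $P$, and the tangent line at $P$ is the line $PP'$. Choose coordinates with $P=(0,1,0)$, $P'=(0,0,1)$ and tangent line $x=0$. Then $a_{22}=a_{33}=0$. The tangent condition says that $(0,y,z)$ substituted into the form gives a multiple of $yz^{\sqrt q}$. The coefficient of $y^{\sqrt q}z$ is $a_{23}$, so $a_{23}=0$ and $a_{32}\neq0$.

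We now try to reach the normal form using only transformations that fix $P$, $P'$ and the line $x=0$, namely $T=\begin{pmatrix}t_{11}&0&0\\ t_{21}&t_{22}&0\\ t_{31}&0&t_{33}\end{pmatrix}$, possibly combined with adding multiples of $x$ to $y$ and $z$. The current matrix is
\[
A=\begin{pmatrix}a_{11}&a_{12}&a_{13}\\ a_{21}&0&0\\ a_{31}&a_{32}&0\end{pmatrix}.
\]
Nonsingularity gives $\det A=a_{13}a_{21}a_{32}\neq0$. The target has $a_{11}=a_{12}=0$.

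Conjugating by $T$ changes the $(1,1)$ entry by terms $t_{21}^{\sqrt q}a_{21}t_{11}+t_{31}^{\sqrt q}a_{31}t_{11}+t_{11}^{\sqrt q}a_{13}t_{31}+\dots$. The $(1,2)$ entry becomes $t_{11}^{\sqrt q}a_{12}t_{22}+t_{31}^{\sqrt q}a_{32}t_{22}$, so choosing $t_{31}^{\sqrt q}=-a_{12}t_{11}^{\sqrt q}/a_{32}$ kills it. Afterwards the $(1,1)$ entry is affine-linear in $t_{21}$ with coefficient $a_{21}t_{11}\neq0$, so a suitable $t_{21}$ kills it as well. This sequential elimination is the step to get right: the order of the choices matters, and we must verify that each choice does not reintroduce previously killed entries.

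Finally, diagonal scaling normalizes the remaining entries. We use it to set $a_{13}=1$ and $a_{21}=1$, leaving $a_{32}=a$ and $a_{31}=b$ free. The diagonal entries give three scalars and the overall factor $\lambda$ gives one more, which is enough to fix two entries. The main obstacle I expect is the bookkeeping in these congruence computations, specifically checking that the $x$-$z$ shear does not spoil the zeros at $(2,2)$, $(2,3)$ and $(3,3)$. I would handle this by writing each $T^*AT$ explicitly, as done in the proof of Theorem~\ref{1pt}.
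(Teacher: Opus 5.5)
Your proof is correct, but it reaches the normal form by a different route than the paper. The paper takes three consecutive points $P,\varphi(P),\varphi^2(P)$ of the $\varphi$-orbit and argues that they are non-collinear. This uses the distinctness of the tangent lines at $P$ and $\varphi(P)$, and also that $\varphi(P)$ is itself a non-inflexion, which comes from Proposition~\ref{mirror}. It then sends these points to $e_1,e_2,e_3$. The tangent lines $z=0$ at $e_1$ and $x=0$ at $e_2$, together with $e_3\in C$, force $a_{11}=a_{12}=a_{22}=a_{23}=a_{33}=0$ at once, and only a diagonal scaling remains.

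You instead normalize only $P$, $\varphi(P)$ and the tangent at $P$, and then kill $a_{11},a_{12}$ with the lower-triangular $T$. Geometrically, $B_{11}=B_{12}=0$ says that $Q=Te_1$ is a rational point of $C$ whose tangent passes through $P$. So you are constructing a $\varphi$-preimage of $P$, a backward step in the orbit, rather than using the forward point $\varphi^2(P)$. This is why your $P$ ends up at $(0,1,0)$, consistent with your check of the other direction. Your route avoids the non-collinearity argument, at the cost of a short computation.

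In that computation one claim should be corrected: the $(1,1)$ entry is not affine-linear in $t_{21}$. With $w=(t_{11},0,t_{31})^t$ one has
\[
B_{11}=w^{*}Aw+a_{21}t_{11}\,t_{21}^{\sqrt q}+t_{21}\bigl(t_{11}^{\sqrt q}a_{12}+t_{31}^{\sqrt q}a_{32}\bigr).
\]
The last bracket is exactly $B_{12}/t_{22}$, so it vanishes after your choice of $t_{31}$. What remains is affine in $t_{21}^{\sqrt q}$ with coefficient $a_{21}t_{11}\neq 0$, and it is solvable because Frobenius is bijective on $\mathbb{F}_q$. This cancellation is the real reason the order of choices matters. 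Without it you would face an equation $X^{\sqrt q}+\alpha X+\beta=0$, which by Proposition~\ref{degsqrtq} need not have a rational root.

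Your remaining bookkeeping worries resolve immediately. First, $B_{12}$ does not involve $t_{21}$ because $a_{22}=0$, so choosing $t_{21}$ does not undo $B_{12}=0$. Second, $Te_2$ and $Te_3$ are multiples of $e_2$ and $e_3$, so the $(2,2)$, $(2,3)$ and $(3,3)$ entries of $T^*AT$ are $t_{22}^{\sqrt q+1}a_{22}$, $t_{22}^{\sqrt q}a_{23}t_{33}$ and $t_{33}^{\sqrt q+1}a_{33}$, all zero. Finally, $t_{11}=1$, $t_{33}=a_{13}^{-1}$, $t_{22}^{\sqrt q}=a_{21}^{-1}$ give $B_{13}=B_{21}=1$ and $a=B_{32}\neq 0$.
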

\begin{proof}
Note that the points $(1,0,0)^t$, $(0,1,0)^t$, and $(0,0,1)^t$ lie on $C_{a,b}$. At the point $P=(1,0,0)^t$, the tangent line $\mathcal{L}_P$ meets the curve at $(0,1,0)^t$ as well. This implies that the intersection multiplicity of $\mathcal{L}_PC_{a,b}$ and $C$ at $P$ is less than the order required for an inflexion, confirming that $P$ is a non-inflexion point.

Conversely, suppose that a Hermitian-relative curve $C$ possesses an $\mathbb{F}_q$-rational point $P$ that is not an inflexion. The fact that $P$ is not an inflexion implies $\varphi(P) \neq P$. From the intersection relations $\mathcal{L}_P C \cdot C = \sqrt{q}P + \varphi(P)$ and $\mathcal{L}_{\varphi(P)} C \cdot C = \sqrt{q}\varphi(P) + \varphi^2(P)$, it follows that the two tangent lines $\mathcal{L}_P C$ and $\mathcal{L}_{\varphi(P)} C$ are distinct and intersect at the point $\varphi(P)$. Consequently, the three points $P$, $\varphi(P)$, and $\varphi^2(P)$ are not collinear.  Since the three points $P, \varphi(P)$, and $\varphi^2(P)$ are not collinear, we may assume, without loss of generality, that $P = (1, 0, 0)^t$, $\varphi(P) = (0, 1, 0)^t$, and $\varphi^2(P) = (0, 0, 1)^t$ by applying a suitable projective transformation. 
Let $A = (a_{ij}) \in GL(3, \mathbb{F}_q)$ be the matrix corresponding to $C$; that is, $C = C_A$. Since the tangent line $\mathcal{L}_P C$ passes through both $P = (1, 0, 0)^t$ and $\varphi(P) = (0, 1, 0)^t$, its equation is given by $z = 0$. Similarly, $\mathcal{L}_{\varphi(P)} C$ passes through $\varphi(P) = (0, 1, 0)^t$ and $\varphi^2(P) = (0, 0, 1)^t$, and hence its equation is $x = 0$. On the other hand, the coefficients of the tangent lines $\mathcal{L}_P C$ and $\mathcal{L}_{\varphi(P)} C$ are given by $[a_{11}, a_{12}, a_{13}]$ and $[a_{21}, a_{22}, a_{23}]$, respectively. Comparing these with the equations $z = 0$ and $x = 0$, we obtain$$a_{11} = a_{12} = a_{22} = a_{23} = 0.$$ Furthermore, since the curve $C_A$ passes through $(0, 0, 1)^t$, it follows that $a_{33} = 0$. Since $A$ is non-singular, the entry $a_{13}$ must be non-zero (otherwise, the first row would be zero). Thus, we may divide the matrix by $a_{13}$ and assume, without loss of generality, that $a_{13} = 1$. Consequently, the matrix $A$ takes the form
$$A = \begin{pmatrix} 0 & 0 & 1 \\ a_{21} & 0 & 0 \\ a_{31} & a_{32} & 0 \end{pmatrix}.$$
Since the determinant of $A$, given by $-a_{21} a_{32}$, is non-zero, it follows that $a_{21} \neq 0$ and $a_{32} \neq 0$. The corresponding equation for the curve is then
$$ x^{\sqrt{q}} z + a_{21} x y^{\sqrt{q}} + a_{31} x z^{\sqrt{q}} + a_{32} y z^{\sqrt{q}} = 0.$$
If we let $T = \text{diag}(1, a_{21}^{-\sqrt{q}}, 1)$, then 
\[
T^*AT = \begin{pmatrix} 0 & 0 & 1 \\ 1 & 0 & 0 \\ a_{31} & a_{32} a_{21}^{-\sqrt{q}} & 0 \end{pmatrix}.
\]
Upon substituting $b = a_{31}$ and $a = a_{32} a_{21}^{-\sqrt{q}}$, the matrix $A$ is transformed into the canonical form associated with the curve $C_{a,b}$:
\[
\begin{pmatrix} 0 & 0 & 1 \\ 1 & 0 & 0 \\ b & a & 0 \end{pmatrix}.
\]
Note that $a \neq 0$ since $a_{21}$ and $a_{32}$ are non-zero.
\end{proof}
We first consider the curves with $b = 0$, namely the curves
\begin{equation*}C_{a,0} : x^{\sqrt{q}}z + xy^{\sqrt{q}} + ayz^{\sqrt{q}} = 0,
\end{equation*}where $a \in \mathbb{F}_q^*$.
\begin{proposition}\label{N_qCa0}The number of rational points on $C_{a,0}$ is given as follows:
\begin{enumerate}[{\rm (i)}]
\item If $\gcd(\sqrt{q}+1, 3) = 1$, then $N_q(C_{a,0}) = q+1$.
\item If $\gcd(\sqrt{q}+1, 3) = 3$, then
\begin{equation*}N_q(C_{a,0}) =
\begin{cases}q + 2\sqrt{q} + 1 & \text{if } a^{(q-1)/3} = 1, \\ q - \sqrt{q} + 1 & \text{if } a^{(q-1)/3} \neq 1.
\end{cases}\end{equation*}\end{enumerate}\end{proposition}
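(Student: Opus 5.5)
We count points directly on the affine chart $z=1$ and treat the equation as a $\sqrt{q}$-linearized polynomial in $y$, so that Proposition~\ref{degsqrtq} applies fibrewise over $x$. Write $s=\sqrt{q}$.

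\emph{Points at infinity and with $x=0$.} On $z=0$ the equation becomes $xy^{s}=0$, which gives the two points $(1,0,0)^t$ and $(0,1,0)^t$. On $z=1$ with $x=0$ it reduces to $ay=0$, which gives only $(0,0,1)^t$. These three points are always present.

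\emph{Points with $z=1$, $x=x_0\neq 0$.} Dividing by $x_0$, the equation becomes
\[
y^{s}+\frac{a}{x_0}\,y+x_0^{s-1}=0 .
\]
Apply Proposition~\ref{degsqrtq} with $\alpha=a/x_0$ and $\beta=x_0^{s-1}\neq 0$.
\begin{enumerate}[{\rm (1)}]
\item If $\mathrm{Nm}(a/x_0)\neq 1$, there is exactly one solution $y$. There are $q-1-(s+1)$ such $x_0$, since $\mathrm{Nm}(x_0)=\mathrm{Nm}(a)$ has exactly $s+1$ solutions.
\item If $\mathrm{Nm}(a/x_0)=1$, there are either $s$ solutions or none. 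There are $s$ solutions exactly when $(a/x_0)^{s}=x_0^{(s-1)^2}$. Since $x_0^{q-1}=1$, this is equivalent to $x_0^{2-s}=a^{s}$.
\end{enumerate}
Let $m$ be the number of $x_0$ with $x_0^{s+1}=a^{s+1}$ and $x_0^{2-s}=a^{s}$. Then
\[
N_q(C_{a,0})=3+(q-s-2)+sm=q-s+1+sm.
\]
Hence the three claimed values $q+1$, $q+2s+1$ and $q-s+1$ correspond to $m=1$, $m=3$ and $m=0$ respectively.

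\emph{Computing $m$ (the key step).} Write $x_0=au$ with $u^{s+1}=1$. Since $u^{-s}=u$, we have $u^{2-s}=u^{3}$, so the second condition becomes
\[
u^{3}=a^{2(s-1)}.
\]
Note that $a^{2(s-1)}$ itself lies in the group $\mu_{s+1}$ of $(s+1)$-th roots of unity. Thus $m$ is the number of cube roots of $a^{2(s-1)}$ inside the cyclic group $\mu_{s+1}$.
\begin{enumerate}[{\rm (1)}]
\item If $\gcd(s+1,3)=1$, cubing is a bijection of $\mu_{s+1}$, so $m=1$. This gives (i).
\item If $3\mid s+1$, then $m$ equals $3$ or $0$ according to whether $a^{2(s-1)}$ is a cube in $\mu_{s+1}$. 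That holds iff $a^{2(s-1)(s+1)/3}=a^{2(q-1)/3}=1$. Because $a^{(q-1)/3}$ has order dividing $3$, this is equivalent to $a^{(q-1)/3}=1$. This gives (ii).
\end{enumerate}

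The only delicate point is the exponent bookkeeping in reducing the condition $\alpha^{s}=\beta^{s-1}$ to a cube condition in $\mu_{s+1}$. The identities $(s-1)^2+s\equiv 2-s \pmod{q-1}$ and $u^{-s}=u$ on $\mu_{s+1}$ have to be used carefully there.
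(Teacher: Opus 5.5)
Your argument is correct, and it is the paper's proof with the roles of $x$ and $y$ exchanged. The paper fibres the affine curve over $y$, so the exceptional fibres are $y\in\mu_{\sqrt{q}+1}$, and via a primitive element $\omega$ the count reduces to the congruence $3k\equiv j \pmod{\sqrt{q}+1}$; you fibre over $x$, so the exceptional fibres are $x_0\in a\mu_{\sqrt{q}+1}$, and the substitution $x_0=au$ turns the condition into counting cube roots of $a^{2(\sqrt{q}-1)}$ in $\mu_{\sqrt{q}+1}$ without discrete logarithms. One small advantage of your version is that isolating $x_0=0$ keeps the coefficient $a/x_0$ nonzero, so Proposition~\ref{degsqrtq} is used strictly within its hypothesis $\alpha\in\mathbb{F}_q^{\ast}$, whereas the paper tacitly applies it at $y=0$, where the linear coefficient $y^{\sqrt{q}}$ vanishes (harmlessly, since $x^{\sqrt{q}}=0$ forces $x=0$).
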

\begin{proof}
The points on the curve $C_{a,0}$ lying on the line $z=0$ are $(1,0,0)$ and $(0,1,0)$. For the points in the affine plane ($z \neq 0$), we consider the affine equation:
\begin{equation}\label{Ca0aff}
x^{\sqrt{q}} + x y^{\sqrt{q}} + ay = 0.
\end{equation}
We now determine the number of solutions for $x$ by applying Proposition~\ref{degsqrtq} for each $y \in \mathbb{F}_q$.
If $\mathrm{Nm}(y) \neq 1$, the equation \eqref{Ca0aff} has exactly one solution for $x$. Since there are $q - (\sqrt{q}+1)$ such values of $y$ in $\mathbb{F}_q$ (including $y=0$, where $\mathrm{Nm}(0)=0 \neq 1$), we obtain $(q - \sqrt{q} - 1)$ points in the affine plane. Including the two points at infinity, we have a total of $q - \sqrt{q} + 1$ points so far. Next, we consider the case $\mathrm{Nm}(y) = 1$. There are exactly $\sqrt{q}+1$ such elements in $\mathbb{F}_q$. For these values, the equation \eqref{Ca0aff} has either $\sqrt{q}$ solutions or no solution. By Proposition~\ref{degsqrtq}, it has $\sqrt{q}$ solutions if and only if the constant term $ay$ satisfies $ay = 0$ or $(ay)^{1-\sqrt{q}} = y^{\sqrt{q}}$. Since $\mathrm{Nm}(y)=1$ implies $ay \neq 0$, the condition is equivalent to:\begin{equation}\label{cond_i}(ay)^{1-\sqrt{q}} = y^{\sqrt{q}}.\end{equation}Let $i$ be the number of elements $y \in \mathbb{F}_q$ satisfying $\mathrm{Nm}(y) = 1$ and \eqref{cond_i}. Then the total number of rational points on $C_{a,0}$ is:\begin{equation*}N_q(C_{a,0}) = (q - \sqrt{q} + 1) + i\sqrt{q}.\end{equation*}
The equation $(ay)^{1-\sqrt{q}} = y^{\sqrt{q}}$ is equivalent to $(ay)^{\sqrt{q}-1} = y$ (by raising both sides to the $\sqrt{q}$-th power), or simply, $(ay)^{\sqrt{q}-1} \cdot y^{-1} = 1$. By rearranging the terms, we obtain:
\begin{equation}\label{y_eq}
y^{\sqrt{q}-2} = a^{1-\sqrt{q}}.
\end{equation}
Let $\omega$ be a primitive element of $\mathbb{F}_q$. Since $\mathrm{Nm}(y) = 1$, we can represent $y$ as $y = \omega^{(\sqrt{q}-1)k}$ for some $k \in \{0, 1, \dots, \sqrt{q}\}$. Let $a = \omega^j$. Substituting these into \eqref{y_eq}, we have:
\begin{equation*}
\left( \omega^{(\sqrt{q}-1)k} \right)^{\sqrt{q}-2} = \left( \omega^j \right)^{1-\sqrt{q}},
\end{equation*}
which is equivalent to the following congruence modulo $q-1$:
\begin{equation*}
(\sqrt{q}-2)(\sqrt{q}-1)k \equiv j(1-\sqrt{q}) \pmod{q-1}.
\end{equation*}
Dividing both sides by $\sqrt{q}-1$ (noting that $q-1 = (\sqrt{q}-1)(\sqrt{q}+1)$), we obtain:
\begin{equation*}
(\sqrt{q}-2)k \equiv -j \pmod{\sqrt{q}+1}.
\end{equation*}
Since $\sqrt{q}-2 \equiv -3 \pmod{\sqrt{q}+1}$, the congruence simplifies further to:
\begin{equation}\label{final_cong}
-3k \equiv -j \pmod{\sqrt{q}+1}, \quad \text{or} \quad 3k \equiv j \pmod{\sqrt{q}+1}.
\end{equation}
The number of solutions for $k$ in the congruence $3k \equiv j \pmod{\sqrt{q}+1}$ depends on the value of $ \gcd(\sqrt{q}+1, 3)$.
\begin{enumerate}[{\rm (i)}]
\item If $ \gcd(\sqrt{q}+1, 3) = 1$, the congruence has a unique solution for $k \in \{0, 1, \dots, \sqrt{q}\}$ for any $j$. Thus, $i = 1$, which yields$$ N_q(C_{a,0}) = (q - \sqrt{q} + 1) + 1 \cdot \sqrt{q} = q + 1. $$
\item If $ \gcd(\sqrt{q}+1, 3) = 3$, the congruence has three distinct solutions if $j$ is a multiple of $3$, and no solution otherwise. Since $q-1$ is a multiple of $3$ when $\sqrt{q}+1$ is a multiple of $3$, the condition that $j$ is a multiple of $3$ is equivalent to 
\[ a^{\frac{q-1}{3}} = (\omega^j)^{\frac{q-1}{3}} = (\omega^{q-1})^{\frac{j}{3}} = 1. \]
Consequently, if $a^{\frac{q-1}{3}} = 1$, we have $i = 3$ and 
\[ N_q(C_{a,0}) = (q - \sqrt{q} + 1) + 3\sqrt{q} = q + 2\sqrt{q} + 1. \]
If $a^{\frac{q-1}{3}} \neq 1$, then $i = 0$ and 
\[ N_q(C_{a,0}) = q - \sqrt{q} + 1. \]
\end{enumerate}
This completes the proof of Proposition \ref{N_qCa0}.
\end{proof}
%
\begin{proposition}\label{I_qCa0}
The number $I_q(C_{a,0})$ of rational inflexions of the curve $C_{a,0}$ is determined by the value of $\sqrt{q}$ modulo $3$ as follows:
\begin{enumerate}[{\rm (i)}]
\item If $\sqrt{q} \equiv 0 \pmod{3}$, then $I_q(C_{a,0}) = 1$.
\item If $\sqrt{q} \equiv 1 \pmod{3}$, then $I_q(C_{a,0}) = 2$.
\item If $\sqrt{q} \equiv 2 \pmod{3}$, then $I_q(C_{a,0}) = 0$.
\end{enumerate}
\end{proposition}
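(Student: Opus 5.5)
The plan is to use Proposition~\ref{eigenvector}: the rational inflexions of $C_{a,0}$ are exactly the points of $C_{a,0}(\mathbb{F}_q)$ that are eigenvectors of $M=A^{-1}A^*$, where
$$A=\begin{pmatrix}0&0&1\\ 1&0&0\\ 0&a&0\end{pmatrix}.$$
So I will compute $M$ explicitly, parametrize its eigenvectors over $\mathbb{F}_q$, and impose the curve equation.

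\textbf{Step 1: the matrix $M$.} Directly,
$$A^*=\begin{pmatrix}0&1&0\\ 0&0&a^{\sqrt{q}}\\ 1&0&0\end{pmatrix}, \qquad A^{-1}=\begin{pmatrix}0&1&0\\ 0&0&a^{-1}\\ 1&0&0\end{pmatrix}.$$
Hence $M(x,y,z)^t=(a^{\sqrt{q}}z,\ a^{-1}x,\ y)^t$.

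\textbf{Step 2: rational eigenvectors.} The eigenvector equations are $a^{\sqrt{q}}z=\lambda x$, $a^{-1}x=\lambda y$ and $y=\lambda z$. If $z=0$, these force $y=0$ and then $x=0$, which is impossible. So I normalize $z=1$. This gives $y=\lambda$ and $x=a\lambda^2$, together with the condition $\lambda^3=a^{\sqrt{q}-1}$. The point $(a\lambda^2,\lambda,1)$ is $\mathbb{F}_q$-rational exactly when $\lambda\in\mathbb{F}_q^*$. Distinct values of $\lambda$ give distinct points, since $\lambda$ is the $y$-coordinate.

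\textbf{Step 3: the curve condition.} Substituting $(a\lambda^2,\lambda,1)$ into the affine equation \eqref{Ca0aff}, dividing by $a\lambda$, and using $a^{\sqrt{q}-1}=\lambda^3$ turns the condition into
$$\lambda^{2(\sqrt{q}+1)}+\lambda^{\sqrt{q}+1}+1=0,$$
that is, $N^2+N+1=0$ with $N=\mathrm{Nm}(\lambda)\in\mathbb{F}_{\sqrt{q}}$. Therefore $I_q(C_{a,0})$ equals the number of $\lambda\in\mathbb{F}_q^*$ with $\lambda^3=a^{\sqrt{q}-1}$ and $\mathrm{Nm}(\lambda)^2+\mathrm{Nm}(\lambda)+1=0$.

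\textbf{Step 4: counting by $\sqrt{q}\bmod 3$.} This case analysis is the main point of the proof.

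In case (ii), $\sqrt{q}\equiv 1\pmod 3$. Then $3\mid\sqrt{q}-1$, so $a^{\sqrt{q}-1}=(a^{(\sqrt{q}-1)/3})^3$ is a cube. Its cube roots are $\lambda_0\zeta^k$ for $k=0,1,2$, where $\zeta\in\mathbb{F}_{\sqrt{q}}$ is a primitive cube root of unity. Their norms are $\mathrm{Nm}(\lambda_0)\zeta^{2k}$. Since $\mathrm{Nm}(\lambda_0)^3=\mathrm{Nm}(a)^{\sqrt{q}-1}=1$, these three norms run over all three cube roots of unity. Exactly two of them, $\zeta$ and $\zeta^2$, are roots of $N^2+N+1$, so $I_q=2$.

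In case (i), the characteristic is $3$ and $N^2+N+1=(N-1)^2$, so the condition is $N=1$. Cubing is bijective on $\mathbb{F}_q$, so there is a unique $\lambda$. Moreover $\mathrm{Nm}(\lambda)^3=\mathrm{Nm}(a)^{\sqrt{q}-1}=1$ forces $\mathrm{Nm}(\lambda)=1$, so this $\lambda$ satisfies the condition and $I_q=1$.

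In case (iii), the characteristic is not $3$ and $3\nmid\sqrt{q}-1$. Hence $\mathbb{F}_{\sqrt{q}}$ contains no primitive cube root of unity and no element $1$ with $N^2+N+1=0$ (since $1+1+1\neq 0$). So $N^2+N+1$ has no root in $\mathbb{F}_{\sqrt{q}}$, and $I_q=0$.

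I expect the only delicate points to be the correct computation of $A^*$ (conjugate transpose, with the $\sqrt{q}$-power on the entry $a$) and the norm bookkeeping in case (ii).
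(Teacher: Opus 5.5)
Your proposal is correct and follows essentially the same route as the paper: compute $A^{-1}A^*$, parametrize its eigenvectors by $\lambda$ with $\lambda^3=a^{\sqrt{q}-1}$, reduce the curve condition to $\mathrm{Nm}(\lambda)^2+\mathrm{Nm}(\lambda)+1=0$, and split according to $\sqrt{q}\bmod 3$. The differences are cosmetic, and your computations of $A^*$, $A^{-1}$, the eigenvector $(a\lambda^2,\lambda,1)$ and the norm bookkeeping all check out. You rule out $z=0$ directly from the eigenvector equations. In case (i) you use $(N-1)^2$ in characteristic $3$, and in case (iii) you note that $\mathbb{F}_{\sqrt{q}}$ has no primitive cube root of unity; the paper instead evaluates explicitly.
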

\begin{proof}Since the two points $(1, 0, 0)^t$ and $(0, 1, 0)^t$ on the line at infinity $z=0$ are not inflexions, it suffices to search for inflexions within the affine plane $z \neq 0$.

The matrix associated with the curve $C_{a,0}$ is given by
$$A = \begin{pmatrix} 0 & 0 & 1 \\ 1 & 0 & 0 \\ 0 & a & 0 \end{pmatrix}.$$
To apply Proposition~\ref{eigenvector}, we consider the matrix $U = A^{-1} A^*$.  A straightforward calculation yields
$$U = \begin{pmatrix} 0 & 0 & a^{\sqrt{q}} \\ 1/a & 0 & 0 \\ 0 & 1 & 0 \end{pmatrix},$$
and its characteristic polynomial is
\begin{equation}\label{charpoly}
P_U(\lambda) = \det(\lambda I - U) = \lambda^3 - a^{\sqrt{q}-1}.
\end{equation}
For an eigenvalue $\lambda$ of $U$,   a straightforward calculation shows that the coordinates of the eigenvector satisfy
$$(x_0, y_0, 1)^t = \left( \frac{a^{\sqrt{q}}}{\lambda}, \lambda, 1 \right)^t.$$
Substituting these coordinates into the left-hand side of the affine equation \eqref{Ca0aff}, and utilizing the relation $\lambda^3 = a^{\sqrt{q}-1}$, the expression simplifies as follows:
\begin{equation}\label{Ca0lambda}
x_0^{\sqrt{q}} + x_0 y_0^{\sqrt{q}} + a y_0 = \frac{a}{\lambda^{\sqrt{q}}} \left( \lambda^{2(\sqrt{q}+1)} + \lambda^{\sqrt{q}+1} + 1 \right). 
\end{equation}
According to Proposition~\ref{eigenvector}, a point $(x_0, y_0, 1)^t$ on the curve is an inflexion if and only if it is an eigenvector of $U = A^{-1} A^*$. 
Hence the expression in \eqref{Ca0lambda} must vanish. Since $a / \lambda^{\sqrt{q}} \neq 0$ in \eqref{Ca0lambda}, we can divide by this factor to obtain the second equation in the system below. Consequently, the number of rational inflexions is equal to the number of solutions for $\lambda$ in the following system:
\begin{equation}\label{eqinfl}
\begin{cases}\lambda^3 - a^{\sqrt{q}-1} = 0, \\\lambda^{2(\sqrt{q}+1)} + \lambda^{\sqrt{q}+1} + 1 = 0.\end{cases}\end{equation}
(i) The case $\sqrt{q} \equiv 0 \pmod{3}$. In this case, the characteristic of $\mathbb{F}_q$ is $3$. Since the Frobenius map $\sigma: u \mapsto u^3$ is an automorphism of $\mathbb{F}_q$, the equation $\lambda^3 = a^{\sqrt{q}-1}$ has a unique solution in $\mathbb{F}_q$, given by $\lambda = (a^{\sqrt{q}-1})^{q/3}$. Substituting this into the second equation of \eqref{eqinfl}, we obtain:
\begin{equation*}
\left( a^{\frac{q(\sqrt{q}-1)}{3}} \right)^{2(\sqrt{q}+1)} + \left( a^{\frac{q(\sqrt{q}-1)}{3}} \right)^{\sqrt{q}+1} + 1 =  a^{(q-1)\frac{2q }{3}}  +  a^{(q-1)\frac{q }{3}}  + 1 =  0.
\end{equation*}
Therefore, the system \eqref{eqinfl} has a unique solution for $\lambda$, which implies that the curve has a unique rational inflexion.

\noindent (ii) The case $\sqrt{q} \equiv 1 \pmod{3}$. In this case, $3$ divides $\sqrt{q}-1$, consequently, 3 also divides $q-1$. The equation $\lambda^3 = a^{\sqrt{q}-1} $ possesses three distinct roots in $\mathbb{F}_q$, given by:
\begin{equation}\label{3sol} a^{\frac{  \sqrt{q}-1 }{3}}, \quad \zeta  a^{\frac{  \sqrt{q}-1 }{3}}, \quad \zeta^2  a^{\frac{  \sqrt{q}-1 }{3}},
\end{equation}
where $\zeta$ is a primitive cube root of unity (i.e., a root of $X^3=1$). Since $3 \mid (\sqrt{q}-1)$, it follows that $\zeta$ lies in the subfield $\mathbb{F}_{\sqrt{q}}^* \subset \mathbb{F}_q^*$. 

Let $\rho = \mathrm{Nm}\left( a^{\frac{\sqrt{q}-1}{3}} \right)$. Note that $\rho = a^{\frac{q-1}{3}}$, which implies $\rho^3 = 1$. Then the norms of the other two roots are given by:$$\mathrm{Nm}\left( \zeta a^{\frac{\sqrt{q}-1}{3}} \right) = \zeta^{\sqrt{q}+1} \rho = \zeta^2 \rho, \quad \text{and} \quad
\mathrm{Nm}\left( \zeta^2 a^{\frac{\sqrt{q}-1}{3}} \right) = \zeta^{2(\sqrt{q}+1)} \rho = \zeta \rho.$$

Thus, the set of norms of the three roots, $\{\rho, \zeta^2 \rho, \zeta \rho\}$, consists of distinct cube roots of unity, and is therefore a permutation of $\{1, \zeta, \zeta^2\}$. On the other hand, the left-hand side of the second equation in \eqref{eqinfl} can be written as $\mathrm{Nm}(\lambda)^2 + \mathrm{Nm}(\lambda) + 1$.  Among the three possible values for the norm, $\zeta$ and $\zeta^2$ satisfy this equation, whereas $1$ does not (since the characteristic is not $3$). Consequently, there are exactly two solutions for $\lambda$, which implies that the curve has exactly two rational inflexions.

\noindent (iii) The case $\sqrt{q} \equiv 2 \pmod{3}$. In this case, we have $3 \mid (\sqrt{q}+1)$, which implies $\gcd(3, \sqrt{q}-1) = 1$. Suppose that $\lambda_0$ be a solution to the equation $\lambda_0^3 = a^{\sqrt{q}-1}$ in $\mathbb{F}_q$. To determine the norm of this solution, we observe that:$$\left( \mathrm{Nm}(\lambda_0) \right)^3 = (\lambda_0^3)^{\sqrt{q}+1} = (a^{\sqrt{q}-1})^{\sqrt{q}+1} = a^{q-1} = 1.$$Since $\gcd(3, \sqrt{q}-1) = 1$, the only cube root of unity in the subfield $\mathbb{F}_{\sqrt{q}}$ is $1$ itself. Therefore, it follows that:
$$\mathrm{Nm}(\lambda_0) = \lambda_0^{\sqrt{q}+1} = 1.$$
Substituting this into the left-hand side of the second equation in \eqref{eqinfl}, we obtain:
$$\lambda_0^{2(\sqrt{q}+1)} + \lambda_0^{\sqrt{q}+1} + 1 = 1^2 + 1 + 1 = 3 \neq 0,$$
as the characteristic of the field is not $3$. Consequently, there are no solutions to \eqref{eqinfl} for $\lambda_0$ in this case, meaning that the curve has no rational inflexions.

\end{proof}
From Propositions~\ref{N_qCa0} and \ref{I_qCa0}, we obtain the following theorem.
\begin{theorem}\label{Ca0}
For the curve $C_{a,0}$, the following hold:
\begin{enumerate}[{\rm (i)}]
\item If $\sqrt{q}\equiv 0 \pmod{3}$, then $C_{a,0}$ is of type $(q+1, 1)$.
\item If $\sqrt{q}\equiv 1 \pmod{3}$, then $C_{a,0}$ is of type $(q+1, 2)$.
\item If $\sqrt{q}\equiv 2 \pmod{3}$, then $C_{a,0}$ is of type:
\begin{itemize}
\item $(q+2\sqrt{q}+1, 0)$ if $a = \omega^{3j}$ for some $0 \le j < \frac{q-1}{3}$,
\item $(q-\sqrt{q}+1, 0)$ otherwise.
\end{itemize}Here, $\omega$ is a primitive element of $\mathbb{F}_q$.
\end{enumerate}
\end{theorem}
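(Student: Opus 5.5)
The plan is to combine Propositions~\ref{N_qCa0} and \ref{I_qCa0} case by case. The only extra work is to translate the gcd condition of Proposition~\ref{N_qCa0} into residues of $\sqrt{q}$ modulo $3$, and the condition $a^{(q-1)/3}=1$ into the statement that $a$ is a cube $\omega^{3j}$.

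First, observe that $\gcd(\sqrt{q}+1,3)=3$ exactly when $\sqrt{q}\equiv 2 \pmod 3$. Hence in cases (i) and (ii) of the theorem we have $\gcd(\sqrt{q}+1,3)=1$, and Proposition~\ref{N_qCa0}(i) gives $N_q(C_{a,0})=q+1$ for every $a\in\mathbb{F}_q^*$. Pairing this with Proposition~\ref{I_qCa0}(i) and (ii) gives the types $(q+1,1)$ when $3\mid\sqrt{q}$ and $(q+1,2)$ when $\sqrt{q}\equiv1\pmod 3$.

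In case (iii), $\sqrt{q}\equiv2\pmod3$, so $3\mid \sqrt{q}+1$ and hence $3\mid q-1$. Proposition~\ref{I_qCa0}(iii) gives $I_q=0$, and Proposition~\ref{N_qCa0}(ii) gives $N_q=q+2\sqrt{q}+1$ or $q-\sqrt{q}+1$ according as $a^{(q-1)/3}=1$ or not. It remains to check that $a^{(q-1)/3}=1$ if and only if $a=\omega^{3j}$ for some $0\le j<(q-1)/3$. Write $a=\omega^m$ with $0\le m<q-1$. Then $a^{(q-1)/3}=1$ if and only if $(q-1)\mid m(q-1)/3$, that is, $3\mid m$. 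This is exactly the stated condition with $m=3j$, and the range $0\le m<q-1$ corresponds to $0\le j<(q-1)/3$. This observation already appears in the proof of Proposition~\ref{N_qCa0}.

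There is no real obstacle, since everything is a direct assembly of the two preceding propositions. The only point requiring care is checking that the case split by $\sqrt{q}\bmod 3$ matches the gcd split of Proposition~\ref{N_qCa0}. This is immediate because $\sqrt{q}+1\equiv \sqrt{q}+1 \pmod 3$ vanishes modulo $3$ exactly when $\sqrt{q}\equiv 2$.
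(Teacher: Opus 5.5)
Your proposal is correct and is essentially the paper's own proof, which simply says the result follows directly from Propositions~\ref{N_qCa0} and~\ref{I_qCa0}. Your two translations fill in details the paper leaves implicit: $\gcd(\sqrt{q}+1,3)=3$ exactly when $\sqrt{q}\equiv 2 \pmod 3$, and $a^{(q-1)/3}=1$ exactly when $a=\omega^{3j}$.
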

\begin{proof}
The proof follows directly from the results established in Proposition~\ref{N_qCa0} and Proposition~\ref{I_qCa0}.
\end{proof}

\section{Complete Characterization of the General Case}
In Section 3, we studied curves with $N_q(C) \neq I_q(C)$ using the simple equation \eqref{eqCab} and found all types $(N_q(C), I_q(C))$ for the case $b = 0$. In this section, we consider the case $b \neq 0$. For these curves, we transform the equation into a form with only one parameter, which allows for a systematic analysis of their properties.
\begin{proposition}\label{prop:Cc}
Any Hermitian-relative curve $C_{a,b}$ with $a, b \in \mathbb{F}_q^*$ defined by
\begin{equation*}
C_{a,b} : x^{\sqrt{q}} z + x y^{\sqrt{q}} + b x z^{\sqrt{q}} + a y z^{\sqrt{q}} = 0
\end{equation*}
is projectively equivalent to a curve $C_c$ for some $c \in \mathbb{F}_q^*$, where $C_c$ is given by
\begin{equation}\label{eq:Cc}
C_c : x^{\sqrt{q}}z - cxy^{\sqrt{q}} + cxz^{\sqrt{q}} - yz^{\sqrt{q}} = 0.
\end{equation}
The curve $C_c$ corresponds to the matrix $M_c = \begin{pmatrix} 0 & 0 & 1 \\ -c & 0 & 0 \\ c & -1 & 0 \end{pmatrix}$. Conversely, any curve $C_c$ with $c \neq 0$ is projectively equivalent to $C_{a,b}$ for some $a, b \in \mathbb{F}_q^*$.
\end{proposition}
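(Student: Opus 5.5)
The approach is to realize the congruence with a diagonal matrix and invoke Lemma~\ref{cong}. Let $A_{a,b}=\begin{pmatrix}0&0&1\\ 1&0&0\\ b&a&0\end{pmatrix}$ be the matrix of $C_{a,b}$, as in Proposition~\ref{Cab}, and let $T=\mathrm{diag}(t_1,t_2,t_3)$ with $t_i\in\mathbb{F}_q^*$. Then $T^*=\mathrm{diag}(t_1^{\sqrt{q}},t_2^{\sqrt{q}},t_3^{\sqrt{q}})$, so the $(i,j)$ entry of $T^*A_{a,b}T$ is $t_i^{\sqrt{q}}a_{ij}t_j$. In particular $T^*A_{a,b}T$ has the same zero pattern as $A_{a,b}$ and as $M_c$, namely nonzero entries only at positions $(1,3),(2,1),(3,1),(3,2)$. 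These entries are
\[
t_1^{\sqrt{q}}t_3,\quad t_2^{\sqrt{q}}t_1,\quad b\,t_3^{\sqrt{q}}t_1,\quad a\,t_3^{\sqrt{q}}t_2 .
\]

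We want $T^*A_{a,b}T=\lambda M_c$ for some $\lambda\in\mathbb{F}_q^*$. Comparing with the entries $1,-c,c,-1$ of $M_c$ at the same positions gives four conditions. The $(1,3)$ entry fixes $\lambda=t_1^{\sqrt{q}}t_3$. The $(3,1)$ entry defines $c$. The two remaining conditions are:
\begin{itemize}
\item the $(2,1)$ and $(3,1)$ entries must be negatives of each other, so $t_2^{\sqrt{q}}=-b\,t_3^{\sqrt{q}}$, i.e.\ $t_2=-b^{\sqrt{q}}t_3$, using that $u\mapsto u^{\sqrt{q}}$ is an involution of $\mathbb{F}_q$;
\item the $(3,2)$ entry must equal $-\lambda$, which after substitution gives $t_1^{\sqrt{q}}=a\,b^{\sqrt{q}}t_3^{\sqrt{q}}$, i.e.\ $t_1=a^{\sqrt{q}}b\,t_3$.
\end{itemize}

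Normalizing $t_3=1$, we take $T=\mathrm{diag}(a^{\sqrt{q}}b,\,-b^{\sqrt{q}},\,1)$, which is nonsingular because $a,b\neq 0$. Then $\lambda=a\,b^{\sqrt{q}}$, and
\[
c=\frac{b\,t_1}{\lambda}=a^{\sqrt{q}-1}b^{2-\sqrt{q}}\in\mathbb{F}_q^* .
\]
I will verify $T^*A_{a,b}T=\lambda M_c$ entry by entry, and Lemma~\ref{cong} then gives the projective equivalence of $C_{a,b}$ and $C_c$.

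For the converse, I would run the same computation starting from $M_c$ rather than try to invert $c=a^{\sqrt{q}-1}b^{2-\sqrt{q}}$, which could raise $\gcd$ issues. Take $S=\mathrm{diag}(1,s,1)$ with $s^{\sqrt{q}}=-1/c$, i.e.\ $s=(-c^{-1})^{\sqrt{q}}$. The entries of $S^*M_cS$ become $1$ at $(1,3)$, $-c\,s^{\sqrt{q}}=1$ at $(2,1)$, $c$ at $(3,1)$, and $-s$ at $(3,2)$. So $S^*M_cS=A_{a,b}$ with $b=c\neq 0$ and $a=-s\neq 0$, and Lemma~\ref{cong} finishes the argument.

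There is no real obstacle here. The only point needing care is the bookkeeping of the $\sqrt{q}$-powers when solving for $t_1$ and $t_2$, which uses $(u^{\sqrt{q}})^{\sqrt{q}}=u$ on $\mathbb{F}_q$. It is also worth checking separately that $M_c$ itself is nonsingular, since $\det M_c=-c\neq 0$.
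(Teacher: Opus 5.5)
Your proposal is correct and essentially matches the paper's proof: the paper uses exactly your $T=\mathrm{diag}(a^{\sqrt{q}}b,\,-b^{\sqrt{q}},\,1)$ and obtains $T^*A_{a,b}T=ab^{\sqrt{q}}M_c$ with $c=b(a/b)^{\sqrt{q}-1}=a^{\sqrt{q}-1}b^{2-\sqrt{q}}$. The step $t_2=-b^{\sqrt{q}}t_3$ silently uses $(-1)^{\sqrt{q}}=-1$, which holds because either $\sqrt{q}$ is odd or $p=2$. For the converse, the paper just sets $a=b=c$ in the same formula, which returns $c$ directly with no inversion or $\gcd$ issue, so your separate congruence $S=\mathrm{diag}(1,s,1)$ (giving $b=c$, $a=c^{-\sqrt{q}}$) is valid but unnecessary.
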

\begin{proof}
Note that $A_{a,b} = \begin{pmatrix} 0 & 0 & 1 \\ 1 & 0 & 0 \\ b & a & 0 \end{pmatrix}$ is the matrix corresponding to $C_{a,b}$. Let $T$ be the diagonal matrix $T = \mathrm{diag}(ba^{\sqrt{q}}, -b^{\sqrt{q}}, 1)$. A direct calculation yields
$$T^* A_{a,b} T = \begin{pmatrix} 0 & 0 & ab^{\sqrt{q}} \\ -b^2a^{\sqrt{q}} & 0 & 0 \\ b^2a^{\sqrt{q}} & -ab^{\sqrt{q}} & 0 \end{pmatrix} = ab^{\sqrt{q}} \begin{pmatrix} 0 & 0 & 1 \\ -c & 0 & 0 \\ c & -1 & 0 
\end{pmatrix},$$
where $c = \frac{b^2 a^{\sqrt{q}}}{a b^{\sqrt{q}}} = b (\frac{a}{b})^{\sqrt{q}-1}$. Since $ab^{\sqrt{q}}$ is a non-zero scalar, this matrix is equivalent to $M_c$ in $PGL(3, \mathbb{F}_q)$, which proves the projective equivalence of $C_{a,b}$ and $C_c$. Conversely, for any $c \in \mathbb{F}_q^*$, we can choose $a = b = c$. In this case, the parameter $c$ in our transformation becomes $c (\frac{c}{c})^{\sqrt{q}-1} = c$. Thus, $C_{c,c}$ is projectively equivalent to $C_c$, showing that every $C_c$ can be realized as a curve of the form $C_{a,b}$.
\end{proof}
 Since  $c \in \mathbb{F}_q^*$ it suffices to study the $q-1$ matrices and their corresponding $q-1$ curves. In this section, we fix the notation $C_c$ for the curve defined in \eqref{eq:Cc} and denote its corresponding matrix by $M:=M_c$. To simplify the exposition, we shall write $M$ instead of $M_c$ whenever there is no risk of confusion. Also, when we need, we use $C_{M_c}$ instead of $C_c$. 

Recall from Proposition \ref{ext_varphi}  that the matrix $U := M^{-1}M^*$ plays a leading role in handling the general case. In the following lemma, we specify this matrix and its algebraic properties for the curve $C_c$.
\begin{lemma}\label{prop:basic}
Let $C_c$ be the curve defined in \eqref{eq:Cc} and $M$ be its associated matrix. The following properties hold:
\begin{enumerate}[{\rm (i)}]
\item $U:=M^{-1}M^* = 
\begin{pmatrix} 0&0&c^{-1}\\ -1&0&1\\ 0&-c^{\sqrt{q}}&c^{\sqrt{q}} \end{pmatrix}
$. 
\item $P(\lambda):=P_U(\lambda) = \lambda^3 - c^{\sqrt{q}} \lambda^2 +  c^{\sqrt{q}} \lambda - c^{\sqrt{q}-1}$ is a characteristic polynomial of the matrix $U=M^{-1}M^*$.
\item The curve $C_c$ meets $z=0$ at $(1,0,0)^t$ and $(0,1,0)^t$, neither of which is an inflexion point.
\end{enumerate}
\end{lemma}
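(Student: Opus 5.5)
The plan is to verify all three items by direct computation, starting from the explicit matrix
$M=\begin{pmatrix} 0&0&1\\ -c&0&0\\ c&-1&0\end{pmatrix}$
and using Proposition~\ref{eigenvector} for the inflexion statement in (iii). None of the steps is deep; the one place that needs care is the conjugate transpose.

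For (i), I first write down $M^*$. By definition its $(i,j)$ entry is $m_{ji}^{\sqrt{q}}$, which gives
$M^*=\begin{pmatrix} 0&-c^{\sqrt{q}}&c^{\sqrt{q}}\\ 0&0&-1\\ 1&0&0\end{pmatrix}$.
Next I invert $M$ by solving $M(x,y,z)^t=(u,v,w)^t$. This gives $z=u$, $x=-v/c$ and $y=cx-w=-v-w$, so
$M^{-1}=\begin{pmatrix} 0&-c^{-1}&0\\ 0&-1&-1\\ 1&0&0\end{pmatrix}$.
The rows of $U=M^{-1}M^*$ are then immediate:
\begin{itemize}
\item the first row is $-c^{-1}$ times the second row of $M^*$, namely $(0,0,c^{-1})$;
\item the second row is minus the sum of the second and third rows of $M^*$, namely $(-1,0,1)$;
\item the third row is the first row of $M^*$, namely $(0,-c^{\sqrt{q}},c^{\sqrt{q}})$.
\end{itemize}
This is exactly the matrix claimed in (i). The main obstacle, such as it is, is computing $M^*$ correctly: both the transposition and the Frobenius twist of the entries must be applied.

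For (ii), I expand $\det(\lambda I-U)$ along the first row, where
$\lambda I-U=\begin{pmatrix} \lambda&0&-c^{-1}\\ 1&\lambda&-1\\ 0&c^{\sqrt{q}}&\lambda-c^{\sqrt{q}}\end{pmatrix}$.
The expansion gives $\lambda\bigl(\lambda(\lambda-c^{\sqrt{q}})+c^{\sqrt{q}}\bigr)-c^{-1}\cdot c^{\sqrt{q}}$. This equals $\lambda^3-c^{\sqrt{q}}\lambda^2+c^{\sqrt{q}}\lambda-c^{\sqrt{q}-1}$, as stated.

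For (iii), I set $z=0$ in \eqref{eq:Cc}. Every term except $-cxy^{\sqrt{q}}$ vanishes, and since $c\neq0$ we get $xy=0$. Hence the only points of $C_c$ on $z=0$ are $(1,0,0)^t$ and $(0,1,0)^t$. Both are $\mathbb{F}_q$-rational, so Proposition~\ref{eigenvector} applies: each is an inflexion exactly when it is an eigenvector of $U$. From the columns of $U$ we read off $U(1,0,0)^t=(0,-1,0)^t$ and $U(0,1,0)^t=(0,0,-c^{\sqrt{q}})^t$. Neither image is proportional to the original vector, so neither point is an inflexion.

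As a cross-check, $\varphi_{M}$ sends $(1,0,0)^t\mapsto(0,1,0)^t$ and $(0,1,0)^t\mapsto(0,0,1)^t$. This is consistent with the construction in Proposition~\ref{Cab}, where $P=(1,0,0)^t$, $\varphi(P)=(0,1,0)^t$ and $\varphi^2(P)=(0,0,1)^t$.
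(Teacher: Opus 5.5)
Your proposal is correct and takes the same approach as the paper, which omits the proof as a direct calculation. Your computations of $M^*$, $M^{-1}$, $U$, the characteristic polynomial, and the two non-eigenvector checks via Proposition~\ref{eigenvector} are all accurate. For (iii) one could equally note, as in Proposition~\ref{Cab}, that the tangent line at $(1,0,0)^t$ is $z=0$ and the one at $(0,1,0)^t$ is $x=0$. Each of these lines also passes through a second rational point of the curve.
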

\begin{proof}
The proof is omitted as the result can be easily verified through a direct calculation.
\end{proof}
By excluding the line $z=0$ from the projective plane, we focus on the affine part of the curve, denoted by $C_c^\circ$. Since the intersection of $C_c$ and the line $z=0$ consists precisely of the two points $(1,0,0)^t$ and $(0,1,0)^t$, the set $C_c^\circ$ represents $C_c$ minus these two points. By setting $z=1$ in \eqref{eq:Cc}, we obtain the affine equation for $C_c^\circ$:
\begin{equation}\label{eq:affCc}
x^{\sqrt{q}} - cx(y^{\sqrt{q}} -1) - y = 0.
\end{equation}
According to Proposition \ref{degsqrtq}, the coefficient of the linear term in $x$ plays a crucial role in determining the number of roots. This motivates us to partition the field $\mathbb{F}_q$ based on the value of the norm of this coefficient. Specifically, we define the following sets:
\begin{equation*}
\mathcal{Y}_1 = \{ y \in \mathbb{F}_q \mid \mathrm{Nm}(c(y^{\sqrt{q}}-1)) = 1 \}, \quad \mathcal{Y}_0 = \mathbb{F}_q \setminus \mathcal{Y}_1.
\end{equation*}
This classification allows us to determine the number of solutions for each case.
\begin{lemma}\label{mathcalY} For the curve $C_c$, the following properties hold:
\begin{enumerate}[{\rm(i)}]
\item $|\mathcal{Y}_1| = \sqrt{q} + 1$ and $|\mathcal{Y}_0| =q-( \sqrt{q} + 1)$.
\item For each $y_0 \in \mathcal{Y}_0$, the line $y = y_0$ intersects $C_c^\circ$ at exactly one $\mathbb{F}_q$-point.
\item\label{sqline} For each $y_0 \in \mathcal{Y}_1$, the following are equivalent:
\begin{enumerate}[{\rm(a)}]
\item The line $y = y_0$ intersects the affine curve $C_c^\circ$ at an $\mathbb{F}_q$-point.
\item The line $y = y_0$ is a $\mathbb{F}_q$-rational $\sqrt{q}$-line of $C_c^\circ$; that is, $|(y = y_0) \cap C_c^\circ(\mathbb{F}_q)| = \sqrt{q}$.
\item $y_0 = 0$ or $y_0^{1-\sqrt{q}} = -c(y_0^{\sqrt{q}} - 1)$.
\item $y_0 = 0$ or $y_0^{\sqrt{q}-1} +c^{\sqrt{q}} y_0  - c^{\sqrt{q}}=0$.
\end{enumerate}
\item\label{ssqrtq} Let $s$ be the number of points in $\mathcal{Y}_1$ that satisfy the equivalent conditions in item \ref{sqline}. Then, the total number of $\mathbb{F}_q$-rational points on $C_c$ is given by
$$N_q(C_c) = (q - \sqrt{q} + 1) + s\sqrt{q}.$$
\end{enumerate}
\end{lemma}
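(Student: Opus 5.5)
The plan mirrors the proof of Proposition~\ref{N_qCa0}: fix $y_0\in\mathbb{F}_q$ and read the affine equation \eqref{eq:affCc} as the equation $X^{\sqrt{q}}+\alpha X+\beta=0$ in $X=x$, with
\[
\alpha=-c(y_0^{\sqrt{q}}-1),\qquad \beta=-y_0 .
\]
Every statement then follows from Proposition~\ref{degsqrtq}, once we note that $\mathrm{Nm}(\alpha)=\mathrm{Nm}(c(y_0^{\sqrt{q}}-1))$ because $\mathrm{Nm}(-1)=1$.

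For (i), the map $y\mapsto c(y^{\sqrt{q}}-1)$ is a bijection of $\mathbb{F}_q$, since Frobenius is bijective and an affine map with $c\neq0$ is bijective. Hence $|\mathcal{Y}_1|$ equals the number of elements of norm $1$, which is $|\mathrm{Ker}(\mathrm{Nm})|=(q-1)/(\sqrt{q}-1)=\sqrt{q}+1$. The value of $|\mathcal{Y}_0|$ follows.

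For (ii), take $y_0\in\mathcal{Y}_0$. Then $\mathrm{Nm}(\alpha)\neq1$, and this includes the case $\alpha=0$. If $\alpha\neq0$, Proposition~\ref{degsqrtq}(ii) gives exactly one root. If $\alpha=0$, the equation is $x^{\sqrt{q}}=y_0$, which has exactly one root because Frobenius is bijective. Either way there is exactly one $\mathbb{F}_q$-point on the line $y=y_0$.

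For (iii), take $y_0\in\mathcal{Y}_1$, so $\alpha\neq0$ and $\mathrm{Nm}(\alpha)=1$. Proposition~\ref{degsqrtq}(i) shows that a solution exists if and only if $\beta=0$ or $\alpha=\beta^{1-\sqrt{q}}$, and that in this case there are $\sqrt{q}$ distinct rational roots. This gives (a)$\Leftrightarrow$(b).
\begin{itemize}
\item The condition $\beta=0$ is $y_0=0$.
\item The condition $\alpha=\beta^{1-\sqrt{q}}$ reads $-c(y_0^{\sqrt{q}}-1)=(-y_0)^{1-\sqrt{q}}$. Since $1-\sqrt{q}$ is even in odd characteristic, and signs are irrelevant in characteristic $2$, we have $(-y_0)^{1-\sqrt{q}}=y_0^{1-\sqrt{q}}$. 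This gives (c).
\end{itemize}
To get (d) from (c) when $y_0\neq0$, raise (c) to the $\sqrt{q}$-th power. Using $y_0^{q}=y_0$, this becomes
\[
y_0^{\sqrt{q}-q}=y_0^{\sqrt{q}-1}=-c^{\sqrt{q}}(y_0-1),
\]
which rearranges to $y_0^{\sqrt{q}-1}+c^{\sqrt{q}}y_0-c^{\sqrt{q}}=0$. Since Frobenius is injective, this step is reversible.

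For (iv), sum over all $y_0$ and add the two points at infinity from Lemma~\ref{prop:basic}(iii):
\[
N_q(C_c)=2+(q-\sqrt{q}-1)+s\sqrt{q}=(q-\sqrt{q}+1)+s\sqrt{q}.
\]

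The only delicate points are the degenerate case $\alpha=0$ in (ii) and the sign and exponent bookkeeping in (c)$\Leftrightarrow$(d). Both are routine.
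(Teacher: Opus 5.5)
Your proposal is correct and follows the paper's proof: you read each line $y=y_0$ as an equation $x^{\sqrt{q}}+\alpha x+\beta=0$, apply Proposition~\ref{degsqrtq}, and add the two points at infinity. Your explicit treatment of the degenerate line $y_0=1$ (where $\alpha=0$, so Proposition~\ref{degsqrtq} does not literally apply) and of the sign in $(-y_0)^{1-\sqrt{q}}$ fills two small points that the paper passes over silently.
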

\begin{proof}
(i) Since the map $y \mapsto c(y^{\sqrt{q}}-1)$ is a bijection on $\mathbb{F}_q$ and $|\ker(\mathrm{Nm})| = \sqrt{q} + 1$, it follows that $|\mathcal{Y}_1| = \sqrt{q} + 1$, and consequently $|\mathcal{Y}_0| = q - \sqrt{q} - 1$.
(ii) If $y_0 \in \mathcal{Y}_0$, then $\mathrm{Nm}(-c(y_0^{\sqrt{q}}-1)) = \mathrm{Nm}(c(y_0^{\sqrt{q}}-1)) \neq 1$. According to Proposition \ref{degsqrtq}, it follows that equation \eqref{eq:affCc} possesses exactly one solution as an equation in $x$. This implies that the line $y = y_0$ intersects $C_c^\circ$ at precisely one point.
(iii) For $y_0 \in \mathcal{Y}_1$, Proposition \ref{degsqrtq} implies that the equation has $\sqrt{q}$ distinct roots if and only if $y_0 = 0$ or $y_0^{1-\sqrt{q}} = -c(y_0^{\sqrt{q}} - 1)$. Equivalently, this condition can be written as $y_0 = 0$ or $y_0^{\sqrt{q}-1} = -c^{\sqrt{q}}(y_0 - 1)$. If neither condition is satisfied, the equation possesses no roots.
(iv) There are two $\mathbb{F}_q$-points on the line $z = 0$. The affine plane where $z \neq 0$ can be expressed as a disjoint union of $q$ lines of the form $y = y_0$ for $y_0 \in \mathbb{F}_q$. Among these, for the $q - \sqrt{q} - 1$ values of $y_0$ belonging to $\mathcal{Y}_0$, each corresponding line contains exactly one $\mathbb{F}_q$-point. Combining these with the two points at infinity, we obtain $q - \sqrt{q} + 1$ points. Furthermore, for $y_0 \in \mathcal{Y}_1$, we have defined $s$ as the number of such points satisfying the conditions in item (3). Since each of these $s$ lines contributes $\sqrt{q}$ points, an additional $s\sqrt{q}$ points are added to the total.
\end{proof}
\begin{corollary} 
Every Hermitian-relative curve possessing at least one $\mathbb{F}_q$-point that is not an inflexion has at least $q - \sqrt{q} + 1$ $\mathbb{F}_q$-rational points. 
\end{corollary}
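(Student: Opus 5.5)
The plan is to reduce to the normal forms already established and then read off the point count from the formulas proved for each of them. Let $C$ be a Hermitian-relative curve with an $\mathbb{F}_q$-rational point that is not an inflexion. By Proposition~\ref{Cab}, $C$ is projectively equivalent over $\mathbb{F}_q$ to some $C_{a,b}$ with $a\neq 0$. Projective equivalence over $\mathbb{F}_q$ (Lemma~\ref{cong}) induces a bijection on $\mathbb{F}_q$-rational points, so $N_q(C)=N_q(C_{a,b})$. We therefore only need the lower bound for the curves $C_{a,b}$, and we split according to whether $b=0$ or $b\neq 0$.

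\emph{Case $b=0$.} Proposition~\ref{N_qCa0} gives $N_q(C_{a,0})\in\{q+1,\ q+2\sqrt{q}+1,\ q-\sqrt{q}+1\}$. Each of these values is at least $q-\sqrt{q}+1$.

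\emph{Case $b\neq 0$.} Proposition~\ref{prop:Cc} shows that $C_{a,b}$ is projectively equivalent to $C_c$ for some $c\in\mathbb{F}_q^*$. Lemma~\ref{mathcalY}\,(iv) then gives
\[
N_q(C_c)=(q-\sqrt{q}+1)+s\sqrt{q},
\]
where $s\ge 0$ is a count of elements of $\mathcal{Y}_1$. Hence $N_q(C_c)\ge q-\sqrt{q}+1$.

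There is no genuine obstacle here; the only care needed is that the equivalences used are defined over $\mathbb{F}_q$, so that rational point counts are preserved. For $b=0$ one could equally rerun the counting argument of Lemma~\ref{mathcalY} directly: the $q-\sqrt{q}-1$ values of $y$ with non-unit norm coefficient each contribute exactly one affine point, and the two points on $z=0$ bring the total to $q-\sqrt{q}+1$. This is the uniform reason behind the bound in both cases.
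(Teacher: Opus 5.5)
Your proposal is correct and follows essentially the same route as the paper: reduce to $C_{a,b}$ via Proposition~\ref{Cab}, use Proposition~\ref{N_qCa0} when $b=0$, and for $b\neq 0$ pass to $C_c$ by Proposition~\ref{prop:Cc} and apply Lemma~\ref{mathcalY}\,(iv). Your added remark that the bound has the same underlying count in both cases ($q-\sqrt{q}-1$ affine lines with one point each, plus the two points on $z=0$) is accurate and a nice unifying observation.
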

\begin{proof}
A Hermitian-relative curve $C$ possessing at least one $\mathbb{F}_q$-point that is not an inflexion is projectively equivalent to the curve $C_{a,b}$ defined by \eqref{eqCab}. In the case where $b=0$, it follows from Proposition \ref{N_qCa0} that $N_q(C_{a,0}) \ge q - \sqrt{q} + 1$. When $b \neq 0$, the curve is projectively equivalent to $C_c$ defined by \eqref{eq:Cc} according to Proposition \ref{prop:Cc}. Consequently, Lemma \ref{mathcalY} \ref{ssqrtq} implies that $N_q(C_c) \ge q - \sqrt{q} + 1$, which completes the proof.
\end{proof}
According to Lemma \ref{mathcalY} \ref{ssqrtq}, the question of determining $N_q(C_c)$ reduces to finding the value of $s$ as defined therein.
\begin{lemma} \label{eigenvec}
For an eigenvalue $\lambda_0$ of $U$, $(\frac{1}{c \lambda_0 }, 1-c^{-\sqrt{q}} \lambda_0 , 1)^t = (\frac{1}{c \lambda_0 }, \frac{-1+c \lambda_0}{c\lambda_0^2} , 1)^t $ is  the corresponding eigenvector.
\end{lemma}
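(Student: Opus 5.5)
We verify the claim directly: we solve the linear system $(U-\lambda_0 I)v=0$ using the explicit form of $U$ from Lemma~\ref{prop:basic}(i), and then check that the two expressions for the second coordinate agree by means of the characteristic polynomial. Write $v=(x,y,z)^t$. The rows of $U$ give
\[
c^{-1}z=\lambda_0 x,\qquad -x+z=\lambda_0 y,\qquad -c^{\sqrt{q}}y+c^{\sqrt{q}}z=\lambda_0 z .
\]

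First we normalize $z=1$. This is legitimate: if $z=0$, the first equation gives $\lambda_0 x=0$. Moreover $\lambda_0\neq 0$, because $\det U=\det(M)^{-1}\det(M^*)\neq 0$; indeed, by Lemma~\ref{prop:basic}(ii) the constant term of $P$ is $-c^{\sqrt{q}-1}\ne 0$. Hence $x=0$, and then the second equation forces $y=0$, so $v=0$, which is impossible. With $z=1$:
\begin{itemize}
\item the first equation gives $x=\frac{1}{c\lambda_0}$;
\item the third equation gives $c^{\sqrt q}(1-y)=\lambda_0$, that is, $y=1-c^{-\sqrt q}\lambda_0$;
\item alternatively, the second equation gives $y=\frac{1-x}{\lambda_0}=\frac{c\lambda_0-1}{c\lambda_0^2}$.
\end{itemize}
This already establishes the first displayed form of the eigenvector.

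It remains to see that the two expressions for $y$ coincide, i.e.\ that the three equations are consistent; this is exactly where $P(\lambda_0)=0$ enters. We clear denominators in
\[
1-c^{-\sqrt q}\lambda_0=\frac{c\lambda_0-1}{c\lambda_0^2}
\]
by multiplying through by $c^{\sqrt q+1}\lambda_0^2$, which gives
\[
c^{\sqrt q+1}\lambda_0^2-c\lambda_0^3=c^{\sqrt q+1}\lambda_0-c^{\sqrt q}.
\]
Dividing by $-c$ and rearranging yields $\lambda_0^3-c^{\sqrt q}\lambda_0^2+c^{\sqrt q}\lambda_0-c^{\sqrt q-1}=0$, which is precisely $P(\lambda_0)=0$.

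Thus the vector satisfies all three equations and is an eigenvector. The only real care needed is the sign bookkeeping in this last identity; everything else is routine.
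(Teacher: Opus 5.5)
Your proof is correct and takes the same route as the paper: you write out $Uv=\lambda_0 v$ row by row with $z=1$ and solve for the first two coordinates. You also fill in two points the paper leaves implicit: you justify $z\neq 0$ using $\lambda_0\neq 0$, and you verify via $P(\lambda_0)=0$ that the two expressions for the second coordinate agree, so the middle equation is consistent.
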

\begin{proof} Let $(x_0, y_0, 1)^t$ be an eigenvector of $U$ corresponding to the eigenvalue $\lambda_0$. Then we have the following matrix equation:
\begin{equation}\label{matrixeq}
\begin{pmatrix} 0 & 0 & c^{-1} \\ -1 & 0 & 1 \\ 0 & -c^{\sqrt{q}} & c^{\sqrt{q}} \end{pmatrix} \begin{pmatrix} x_0 \\ y_0 \\ 1 \end{pmatrix} = \lambda_0 \begin{pmatrix} x_0 \\ y_0 \\ 1 \end{pmatrix}.
\end{equation}
This leads to the following system of equations:
\begin{equation}\label{systeq}
\begin{cases}c^{-1} = \lambda_0 x_0 \\ -x_0 + 1 = \lambda_0 y_0 \\ -c^{\sqrt{q}} y_0 + c^{\sqrt{q}} = \lambda_0
\end{cases}\end{equation}
From these relations, we can express $x_0$ and $y_0$ in terms of $\lambda_0$ as follows:
\begin{equation}
x_0 = \frac{1}{c\lambda_0} \quad \text{and} \quad y_0 = 1 - c^{-\sqrt{q}} \lambda_0 = \frac{c\lambda_0 - 1}{c\lambda_0^2}.
\end{equation}
\end{proof}
The following lemma shows the important role of the eigenvalues of $U$.
\begin{lemma} \label{eigenval}
The following properties hold:
\begin{enumerate}[{\rm (i)}]
\item If $(x_0, y_0, 1)^t$ is an inflexion of the curve, then $\frac{1}{cx_0}$ is a root of $P_U(\lambda)$.
\item If the line $y=y_0$ is a $\sqrt{q}$-line of the affine part $C_c^\circ$, then $c^{\sqrt{q}}(1-y_0)$ is a root of $P_U(\lambda)$. Furthermore, this root is a $(\sqrt{q}+1)$-th root of unity.
\end{enumerate}
\end{lemma}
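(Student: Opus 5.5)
The plan is to reduce both parts to the explicit description of eigenvectors in Lemma~\ref{eigenvec} and to use the characterization of inflexions in Proposition~\ref{eigenvector}. For (ii) the key step is to rewrite the $\sqrt{q}$-line condition of Lemma~\ref{mathcalY}\ref{sqline} in terms of $\lambda:=c^{\sqrt{q}}(1-y_0)$.

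\emph{Part (i).} Let $(x_0,y_0,1)^t$ be a rational inflexion. By Proposition~\ref{eigenvector} it is an eigenvector of $U$ for some eigenvalue $\lambda_0$, and $\lambda_0$ is a root of $P_U$. The first equation of the system \eqref{systeq} gives $c^{-1}=\lambda_0x_0$. Hence $x_0\neq 0$ and $\lambda_0=1/(cx_0)$, which is the claim. Affine scaling with $z=1$ is legitimate here, because by Lemma~\ref{prop:basic}(iii) the two points of $C_c$ on $z=0$ are not inflexions.

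\emph{Part (ii), norm condition.} A $\sqrt{q}$-line $y=y_0$ has $y_0\in\mathcal{Y}_1$ by Lemma~\ref{mathcalY}(ii),(iii). So $\mathrm{Nm}(c(y_0^{\sqrt{q}}-1))=1$. Since $c(y_0^{\sqrt{q}}-1)$ is the Frobenius conjugate of $-\lambda=c^{\sqrt{q}}(y_0-1)$, and $\mathrm{Nm}(-1)=1$, we get $\mathrm{Nm}(\lambda)=1$. Equivalently $\lambda^{\sqrt{q}+1}=1$ and $\lambda^{\sqrt{q}}=\lambda^{-1}$; this already gives the ``furthermore'' assertion.

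\emph{Part (ii), $\lambda$ is a root of $P_U$.} I would use condition (d) of Lemma~\ref{mathcalY}\ref{sqline} and split into two cases.
\begin{itemize}
\item If $y_0=0$, then $\lambda=c^{\sqrt{q}}$. Direct substitution gives
$$P(c^{\sqrt{q}})=c^{2\sqrt{q}}-c^{\sqrt{q}-1}=c^{\sqrt{q}-1}\bigl(c^{\sqrt{q}+1}-1\bigr),$$
which vanishes because $\mathrm{Nm}(\lambda)=\mathrm{Nm}(c)=1$.
\item If $y_0\neq 0$, condition (d) reads $y_0^{\sqrt{q}-1}=c^{\sqrt{q}}(1-y_0)=\lambda$, so $y_0^{\sqrt{q}}=\lambda y_0$. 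Applying Frobenius to $y_0=1-c^{-\sqrt{q}}\lambda$ and using $\lambda^{\sqrt{q}}=\lambda^{-1}$ gives $y_0^{\sqrt{q}}=1-c^{-1}\lambda^{-1}$. Equating the two expressions yields
$$1-\frac{1}{c\lambda}=\lambda\bigl(1-c^{-\sqrt{q}}\lambda\bigr).$$
Multiplying through by $c^{\sqrt{q}}\lambda$ and rearranging gives $\lambda^3-c^{\sqrt{q}}\lambda^2+c^{\sqrt{q}}\lambda-c^{\sqrt{q}-1}=0$, that is, $P(\lambda)=0$.
\end{itemize}

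The only delicate step is converting the $\sqrt{q}$-th power $y_0^{\sqrt{q}}$ into an expression in $\lambda$. This depends on knowing $\lambda^{\sqrt{q}}=\lambda^{-1}$ first, so the norm computation must be done before the polynomial verification. Everything else is routine substitution.
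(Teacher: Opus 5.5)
Your proposal is correct and follows essentially the same route as the paper. Part (i) reads $\lambda_0=1/(cx_0)$ off the first equation of \eqref{systeq}, and part (ii) splits into $y_0=0$ and $y_0\neq 0$ using the $\sqrt{q}$-line criterion of Lemma~\ref{mathcalY}. The only difference is one of ordering: you prove $\lambda^{\sqrt{q}}=\lambda^{-1}$ first and use it to eliminate $y_0^{\sqrt{q}}$, whereas the paper substitutes $\lambda=y_0^{\sqrt{q}-1}$ directly into $P_U$, uses both forms (c) and (d) of the criterion, and checks the norm only at the end.
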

\begin{proof}
(i) If $(x_0 , y_0 , 1)^t $ is an inflexion, then it is an eigenvector of $U$ by Proposition \ref{eigenvector}. If $\lambda_0$ is the corresponding eigenvalue to it, we have \eqref{matrixeq}. From \eqref{systeq} we have $\lambda_0 = \frac{1}{cx_0}$. 

(ii) If the line $y=y_0$ is a $\sqrt{q}$-line of $C_c^\circ$, then Lemma \ref{mathcalY} implies that $y_0 \in \mathcal{Y}_1$ and that either $y_0=0$ or $y_0^{1-\sqrt{q}}=-c(y_0^{\sqrt{q}}-1)$ holds.
In the case $y_0 = 0$, the condition $y_0 \in \mathcal{Y}_1$ implies $\mathrm{Nm}(c) = \mathrm{Nm}(-c) = 1$. Consequently, we have:
$$P_U(c^{\sqrt{q}}(1-y_0)) = P_U(c^{\sqrt{q}}) = (c^{\sqrt{q}})^3 - c^{\sqrt{q}}(c^{\sqrt{q}})^2 + c^{\sqrt{q}}(c^{\sqrt{q}}) - c^{\sqrt{q}-1} = 0,$$
since $c^{\sqrt{q}+1} = \mathrm{Nm}(c) = 1$ implies $c^{\sqrt{q}} = c^{-1}$.
In the case where $y_0^{1-\sqrt{q}} = -c(y_0^{\sqrt{q}} - 1)$, raising both sides to the $\sqrt{q}$-th power yields $y_0^{\sqrt{q}-1} = -c^{\sqrt{q}}(y_0 - 1)$. 
Hence we obtain:
$$\begin{aligned}
P_U(c^{\sqrt{q} }(1-y_0 ))  = P_U(y_0^{\sqrt{q}-1}) &= (y_0^{\sqrt{q}-1})^3 - c^{\sqrt{q}} (y_0^{\sqrt{q}-1})^2 + c^{\sqrt{q}} (y_0^{\sqrt{q}-1}) - c^{\sqrt{q}-1} \\
&= (y_0^{\sqrt{q}-1} - c^{\sqrt{q}}) (y_0^{\sqrt{q}-1})^2 + c^{\sqrt{q}} (y_0^{\sqrt{q}-1}) - c^{\sqrt{q}-1}\\
&= (-c^{\sqrt{q}}y_0) (y_0^{\sqrt{q}-1})^2 + c^{\sqrt{q}} (y_0^{\sqrt{q}-1}) - c^{\sqrt{q}-1} \\
&= -c^{\sqrt{q}} y_0^{2\sqrt{q}-1} + c^{\sqrt{q}} y_0^{\sqrt{q}-1} - c^{\sqrt{q}-1} \\
&= c^{\sqrt{q}} y_0^{\sqrt{q}-1} (-y_0^{\sqrt{q}} + 1) - c^{\sqrt{q}-1} \\
&= c^{\sqrt{q}-1}  (y_0^{\sqrt{q}-1}) (-c(y_0^{\sqrt{q}} - 1)) - c^{\sqrt{q}-1} \\
&= c^{\sqrt{q}-1} (y_0^{\sqrt{q}-1} \cdot y_0^{1-\sqrt{q}}) - c^{\sqrt{q}-1} = 0.
\end{aligned}$$
Thus  $c^{\sqrt{q} }(1-y_0 )$ is a root of $P_U (\lambda)$.
Finally, since $y_0 \in \mathcal{Y}_1$, we have $\mathrm{Nm}(c(y_0^{\sqrt{q}}-1)) = 1$. This implies that the element $c^{\sqrt{q}}(1-y_0)$ is also a $(\sqrt{q}+1)$-th root of unity, as its norm satisfies:
$$\mathrm{Nm}(c^{\sqrt{q}}(1-y_0)) =\mathrm{Nm}(c(y_0^{\sqrt{q}}-1)) = 1.$$
\end{proof}
Lemmas \ref{eigenvec} and  \ref{eigenval} suggest the importance of the eigenvalues of $U = M^{-1}M^*$.
Recall that two matrices $A$ and $B$ are said to be similar if there exists a nonsingular matrix $T$ such that $B = T^{-1}AT$. Note that similar matrices share the same characteristic polynomial.
\begin{proposition} For $U=M^{-1}M^*$, we have the following:
\begin{enumerate}[{\rm(i)}]\label{U-1*}
\item $U^* $ and $U^{-1}$ are similar to each other. So we have $P_{U^*} (\lambda)= 
P_{U^{-1}}(\lambda)$. 
\item If $\lambda_0$ is an eigenvalue of $U$, then $\lambda_0^{\sqrt{q}}=\lambda_1^{-1}$ for some eigenvalue $\lambda_1$ of $U$. Note that $\lambda_0$ need not be equal to $\lambda_1$.
\end{enumerate}
\end{proposition}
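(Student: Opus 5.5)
The plan is to exhibit an explicit similarity between $U^*$ and $U^{-1}$, working directly from $U=M^{-1}M^*$ and the rules for the conjugate transpose. I will use that $(AB)^*=B^*A^*$, $(A^{-1})^*=(A^*)^{-1}$ and $(A^*)^*=A^{(q)}$, where $A^{(q)}$ means raising every entry to the $q$-th power. For $A\in GL(3,\mathbb{F}_q)$ the last identity gives $(A^*)^*=A$. Then
\[
U^*=(M^{-1}M^*)^*=(M^*)^*(M^{-1})^*=M(M^*)^{-1},
\qquad
U^{-1}=(M^*)^{-1}M .
\]

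With $T=M$ we get $T^{-1}U^*T=M^{-1}M(M^*)^{-1}M=(M^*)^{-1}M=U^{-1}$. So $U^*$ and $U^{-1}$ are similar, and hence $P_{U^*}(\lambda)=P_{U^{-1}}(\lambda)$. This settles (i). The only point needing care is the identity $(A^*)^*=A^{(q)}=A$, which is exactly where $M$ having entries in $\mathbb{F}_q$ is used.

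For (ii), I relate the eigenvalues of $U^*$ to those of $U$. The matrix $U^*=(U^t)^{(\sqrt{q})}$ is obtained by transposing and raising entries to the $\sqrt{q}$-th power. Transposing does not change the characteristic polynomial, and applying the field automorphism $u\mapsto u^{\sqrt{q}}$ of $\overline{\mathbb{F}}_q$ entrywise commutes with taking determinants. Therefore $P_{U^*}(\lambda)$ is $P_U(\lambda)$ with each coefficient raised to the $\sqrt{q}$-th power. Its roots, counted with multiplicity, are the $\lambda_0^{\sqrt{q}}$ for $\lambda_0$ running over the roots of $P_U$.

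The roots of $P_{U^{-1}}$ are the inverses $\lambda_1^{-1}$ of the eigenvalues $\lambda_1$ of $U$; these are nonzero because $U$ is invertible. By (i) the two multisets of roots coincide. Hence for each eigenvalue $\lambda_0$ of $U$ there is an eigenvalue $\lambda_1$ of $U$ with $\lambda_0^{\sqrt{q}}=\lambda_1^{-1}$.

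No step is a real obstacle. The only things to keep straight are the order of the factors under ${}^*$ and the justification that Frobenius commutes with the characteristic polynomial. As a sanity check, one could verify the result against the explicit $P_U$ of Lemma \ref{prop:basic}(ii).
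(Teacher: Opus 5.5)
Your proposal is correct and follows the paper's proof essentially verbatim: you conjugate $U^*$ by $M$ to obtain $U^{-1}$, then compare the multisets $\{\lambda_i^{\sqrt{q}}\}$ and $\{\lambda_i^{-1}\}$ of eigenvalues. Your remarks on why $(M^*)^*=M$ and why entrywise Frobenius commutes with taking the characteristic polynomial justify two steps that the paper uses without comment.
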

\begin{proof}
(i) Since$$M^{-1}U^*M = M^{-1} (M^{-1}M^*)^* M = M^{-1}M (M^{-1})^* M = (M^{-1})^* M = U^{-1},$$it follows that $U^*$ and $U^{-1}$ are similar. Consequently, their characteristic polynomials coincide, i.e., $P_{U^*}(\lambda) = P_{U^{-1}}(\lambda)$.

(ii) Note that the eigenvalues of $U^*$ are the $\sqrt{q}$-th powers of those of $U$, while the eigenvalues of $U^{-1}$ are the reciprocals of the eigenvalues of $U$. 
Let $\lambda_1, \lambda_2, \lambda_3 \in \overline{\mathbb{F}}_q$ be the eigenvalues of $U$, counted with multiplicity. Then the eigenvalues of $U^*$ and $U^{-1}$ are given by $\lambda_1^{\sqrt{q}}, \lambda_2^{\sqrt{q}}, \lambda_3^{\sqrt{q}}$ and $\lambda_1^{-1}, \lambda_2^{-1}, \lambda_3^{-1}$, respectively. These two sequences of eigenvalues are identical up to reordering by   (i), which leads to (ii).
\end{proof}
Using this lemma, we classify the characteristic polynomials $P_{U_c}$ for $c \in \mathbb{F}_q^*$, where $U_c = M_c^{-1} M_c^*$. Recall that $M_c = \begin{pmatrix} 0 & 0 & 1 \\ -c & 0 & 0 \\ c & -1 & 0 \end{pmatrix}$ is the matrix associated with the curve $C_c$ defined by equation \eqref{eq:Cc}.
\begin{theorem}\label{5polys}
For each $c \in \mathbb{F}_q^*$, the polynomial $P_{U_c}(\lambda)$ either splits completely into linear factors over $\mathbb{F}_q$ or is irreducible over $\mathbb{F}_q$. More precisely, exactly one of the following holds:
\begin{enumerate}[\rm(i)]
\item $P_{U_c}(\lambda)$ has a triple root. This occurs only when $\lambda=1$,  $c=3$ and $p \neq 3$.
\item $P_{U_c}(\lambda)$ has a double root and a simple root, both of which are $(\sqrt{q}+1)$-th roots of unity.
\item $P_{U_c}(\lambda)$ has three distinct simple roots, all of which are $(\sqrt{q}+1)$-th roots of unity.
\item $P_{U_c}(\lambda)$ has three distinct simple roots, only one of which is a $(\sqrt{q}+1)$-th root of unity, while the other two, say $\beta$ and $\gamma$, satisfy $\beta^{\sqrt{q}} = \gamma^{-1}$.
\item $P_{U_c}(\lambda)$ is irreducible over $\mathbb{F}_q$.
\end{enumerate}
\end{theorem}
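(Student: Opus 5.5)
The plan is to study the permutation of the roots of $P(\lambda)=P_{U_c}(\lambda)$ given by $\tau:\lambda\mapsto\lambda^{-\sqrt{q}}$, which Proposition~\ref{U-1*}(ii) says sends roots to roots. Let $\lambda_1,\lambda_2,\lambda_3\in\overline{\mathbb{F}}_q$ be the roots. Since $P$ has coefficients in $\mathbb{F}_q$, the Frobenius $\lambda\mapsto\lambda^q$ also permutes the roots, and $\tau^2(\lambda)=\lambda^q$. So $\tau$ is a permutation of a $3$-element multiset whose square is the $q$-Frobenius.

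\textbf{Splitting versus irreducibility.} First I rule out the case of a linear factor times an irreducible quadratic over $\mathbb{F}_q$. In that case Frobenius fixes one root and swaps the other two. But Frobenius equals $\tau^2$, and the square of any permutation of three objects is either the identity or a $3$-cycle, never a transposition. This gives a contradiction.

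The one subtlety is repeated roots. If $P$ has a repeated root, that root is the root of a factor of multiplicity at least $2$ and is therefore fixed by Frobenius, so it lies in $\mathbb{F}_q$; the remaining root is then in $\mathbb{F}_q$ as well. Hence $P$ either splits over $\mathbb{F}_q$ or is irreducible, and the irreducible case is (v). In the irreducible case Frobenius is a $3$-cycle, which is consistent with $\tau$ being a $3$-cycle.

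\textbf{The split case.} Now all roots lie in $\mathbb{F}_q$, so Frobenius is trivial and $\tau^2=\mathrm{id}$. Thus $\tau$ is an involution on the roots. A root $\lambda$ fixed by $\tau$ satisfies $\lambda^{\sqrt{q}+1}=1$. An involution on three elements fixes either all three or exactly one.
\begin{itemize}
\item If $\tau$ fixes all roots, they are all $(\sqrt{q}+1)$-th roots of unity. Sorting by multiplicity pattern gives (i), (ii) or (iii).
\item If $\tau$ fixes exactly one, the other two satisfy $\beta^{\sqrt{q}}=\gamma^{-1}$ with $\beta\ne\gamma$, which is (iv).
\end{itemize}
The apparent extra cases of (iv) with a repeated root need separate handling. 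If $\beta=\gamma$, then $\beta$ is fixed by $\tau$, which falls back into the "all fixed" case. If the lone fixed root equalled $\beta$, then $\beta$ would be fixed, and hence so would $\gamma=\tau(\beta)$. The multiset bookkeeping must be done carefully here: define $\tau$ on roots counted with multiplicity using the multiset identity $\{\lambda_i^{\sqrt{q}}\}=\{\lambda_i^{-1}\}$.

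\textbf{The triple root.} Comparing $(\lambda-r)^3$ with $\lambda^3-c^{\sqrt{q}}\lambda^2+c^{\sqrt{q}}\lambda-c^{\sqrt{q}-1}$ gives
\[
3r=c^{\sqrt{q}},\qquad 3r^2=c^{\sqrt{q}},\qquad r^3=c^{\sqrt{q}-1}.
\]
If $p=3$, the first equation forces $c=0$, which is impossible. Otherwise $r=r^2$, so $r=1$. Then $c^{\sqrt{q}}=3$, so $c=3\in\mathbb{F}_p$, and the last equation checks out since $3^{\sqrt{q}-1}=1$.

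\textbf{Main obstacle.} I expect the hardest part to be making the map $\tau$ rigorous as a well-defined permutation in the presence of repeated roots, since Proposition~\ref{U-1*}(ii) only gives existence of some $\lambda_1$. To handle this, I will work directly with the multiset equality $\{\lambda_i^{\sqrt{q}}\}=\{\lambda_i^{-1}\}$. The multiset $\{\lambda_i^{\sqrt{q}}\}$ is the multiset of roots of $P^{(\sqrt{q})}$, which equals $\{\lambda_i^{-1}\}$ as multisets, so $\tau$ is a genuine multiset bijection. When roots repeat, I will instead handle the case distinctions directly.
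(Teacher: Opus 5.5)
Your proposal is correct and is essentially the paper's argument. The paper uses the same permutation $\tau$ of the roots, given by $\lambda_i^{\sqrt{q}}=\lambda_{\tau(i)}^{-1}$ (that is, $\lambda\mapsto\lambda^{-\sqrt{q}}$). It handles triple and double roots through the multiplicity-preserving multiset identity coming from Proposition~\ref{U-1*}, and in the distinct-root case it checks $\tau\in\{\mathrm{id},(12),(123)\}$, computing $\lambda_i^{q}=\lambda_{\tau^2(i)}$ to determine where the roots live.

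You run the same identity $\tau^2=\mathrm{Frob}_q$ in the other direction: a transposition is not a square in $S_3$, which rules out a linear factor times an irreducible quadratic. The repeated-root worry you flag is settled exactly as in the paper's Case~2. The index-level bijection obtained from the multiset equality must preserve classes of equal roots, which forces $r^{\sqrt{q}}=r^{-1}$ for a double root $r$.
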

\begin{proof}
As in the lemma above, let $\lambda_1, \lambda_2, \lambda_3 \in \overline{\mathbb{F}}_q$ be  the eigenvalues of $U_c$, counted with multiplicity. We first classify the cases based on whether $P_{U_c}(\lambda)$ has a triple root, a double root, or three distinct roots. 

{\bf Case 1:} $P_{U_c}(\lambda)$ has a triple root. In this case, we have $\lambda_1 = \lambda_2 = \lambda_3$. From the relation $\{\lambda_i^{\sqrt{q}}\} = \{\lambda_i^{-1}\}$, it follows that $\lambda_1^{\sqrt{q}+1} = 1$, which implies $\lambda_1 \in \mathbb{F}_q$. The characteristic polynomial is then expressed as:
$$P_{U_c}(\lambda) = (\lambda - \lambda_1)^3 = \lambda^3 - 3\lambda_1\lambda^2 + 3\lambda_1^2\lambda - \lambda_1^3.$$
By comparing these coefficients with $P_{U_c}(\lambda) = \lambda^3 - c^{\sqrt{q}} \lambda^2 +  c^{\sqrt{q}} \lambda - c^{\sqrt{q}-1}$ and recalling $c\ne 0$ and $\lambda_1 \ne 0$, we know that $\lambda_1=1$, $c=3$ and $p\ne 3$.

{\bf Case 2:}  Suppose $P_{U_c}(\lambda)$ has a double root and a simple root, and let $\lambda_1 = \lambda_2 \ne \lambda_3$. Since the operation of taking $\sqrt{q}$-th powers and reciprocals preserves the multiplicities of the eigenvalues, it follows directly that $\lambda_1^{\sqrt{q}} = \lambda_1^{-1}$ and $\lambda_3^{\sqrt{q}} = \lambda_3^{-1}$. Thus, $\lambda_1^{\sqrt{q}+1} = 1 = \lambda_3^{\sqrt{q}+1}$.

{\bf Case 3:}  $P_{U_c}(\lambda)$ has three distinct roots. Let $\{\lambda_1, \lambda_2, \lambda_3\}$ be the set of eigenvalues of $U_c$ over $\overline{\mathbb{F}}_q$. Then the sets $\{\lambda_1^{\sqrt{q}}, \lambda_2^{\sqrt{q}}, \lambda_3^{\sqrt{q}}\}$ and $\{\lambda_1^{-1}, \lambda_2^{-1}, \lambda_3^{-1}\}$ correspond to the eigenvalues of $U_c^*$ and $U_c^{-1}$, respectively. Since these sets must coincide, for each $i \in \{1, 2, 3\}$, we can define a permutation $\tau$ on $\{1, 2, 3\}$ such that $\lambda_i^{\sqrt{q}} = \lambda_{\tau(i)}^{-1}$. Without loss of generality, we may assume that $\tau$ is the identity $(1)$, the transposition $(12)$, or the $3$-cycle $(123)$. 

{\bf Subcase 3.1.} $\tau = (1)$: In this case, $\lambda_i^{\sqrt{q}} = \lambda_i^{-1}$ for all $i=1, 2, 3$, which implies $\lambda_i^{\sqrt{q}+1} = 1$. Each $\lambda_i$ is a $(\sqrt{q}+1)$-th root of unity and thus belongs to $\mathbb{F}_q$. This corresponds to case (iii) in the Theorem.

{\bf Subcase 3.2.} $\tau = (12)$: The relations $\lambda_1^{\sqrt{q}} = \lambda_2^{-1}$, $\lambda_2^{\sqrt{q}} = \lambda_1^{-1}$, and $\lambda_3^{\sqrt{q}} = \lambda_3^{-1}$ imply that $\lambda_3$ is a $(\sqrt{q}+1)$-th root of unity. Furthermore, the first two relations yield:$$\lambda_1^q = (\lambda_1^{\sqrt{q}})^{\sqrt{q}} = (\lambda_2^{-1})^{\sqrt{q}} = (\lambda_2^{\sqrt{q}})^{-1} = (\lambda_1^{-1})^{-1} = \lambda_1.$$Thus, $\lambda_1 \in \mathbb{F}_q$, and similarly, $\lambda_2 \in \mathbb{F}_q$. This aligns with case (iv).

{\bf Subcase 3.3.}  $\tau = (123)$: The relations are given by $\lambda_1^{\sqrt{q}} = \lambda_2^{-1}$, $\lambda_2^{\sqrt{q}} = \lambda_3^{-1}$, and $\lambda_3^{\sqrt{q}} = \lambda_1^{-1}$. We then observe:
$$\lambda_1^q = (\lambda_1^{\sqrt{q}})^{\sqrt{q}} = (\lambda_2^{-1})^{\sqrt{q}} = (\lambda_2^{\sqrt{q}})^{-1} = (\lambda_3^{-1})^{-1} = \lambda_3.$$ By a similar computation, we find $\lambda_2^q = \lambda_1$ and $\lambda_3^q = \lambda_2$. Since the $q$-th power of each $\lambda_i$ is mapped to $\lambda_{\tau(i)}\ne \lambda_i$, no $\lambda_i$ belongs to $\mathbb{F}_q$. Consequently, $P_{U_c}(\lambda)$ is irreducible over $\mathbb{F}_q$ since it is a cubic polynomial.
This corresponds to case (v).
\end{proof}
\begin{remark}
For any $c \in \mathbb{F}_q^*$, the cubic polynomial $P_{U_c}(\lambda)$ does not have any irreducible factor of degree 2 over $\mathbb{F}_q$. Furthermore, excluding the irreducible case, every $P_{U_c}(\lambda)$ possesses at least one $(\sqrt{q}+1)$-th root of unity as a root.
\end{remark}
From here on, we use $\mu_{\sqrt{q}+1}$ to denote the set of $(\sqrt{q}+1)$-th roots of unity in $\mathbb{F}_q$; that is, $\mu_{\sqrt{q}+1} = \mathrm{Ker}(\mathrm{Nm}) = \{ \zeta \in \mathbb{F}_q \mid \zeta^{\sqrt{q}+1} = 1 \}$.
 
\begin{lemma}
If $\lambda_0 \in \mathbb{F}_q^*$ is a multiple root of $P_{U_c}(\lambda)$, then 
$\lambda_0 \in \mu_{\sqrt{q}+1}$ and  the eigenvector corresponding to it represents a point on the curve $C_c$, which is an inflexion.
\end{lemma}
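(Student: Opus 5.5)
The first assertion, $\lambda_0\in\mu_{\sqrt{q}+1}$, follows directly from the case analysis in the proof of Theorem~\ref{5polys}. If $\lambda_0$ is a triple root, Case~1 gives $\lambda_0=1$. If $\lambda_0$ is a double root, Case~2 gives $\lambda_0^{\sqrt{q}+1}=1$. In both cases $\lambda_0\in\mu_{\sqrt{q}+1}$. The substance of the lemma is therefore the second assertion: the eigenvector
\[
P_0=\Bigl(\tfrac{1}{c\lambda_0},\,1-c^{-\sqrt{q}}\lambda_0,\,1\Bigr)^t
\]
supplied by Lemma~\ref{eigenvec} lies on $C_c$. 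Once that is shown, $P_0$ is $\mathbb{F}_q$-rational because $\lambda_0,c\in\mathbb{F}_q$, and Proposition~\ref{eigenvector} then makes it an inflexion.

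To show $P_0\in C_c$, substitute $x_0=1/(c\lambda_0)$ and $y_0=1-c^{-\sqrt{q}}\lambda_0$ into the affine equation \eqref{eq:affCc} and simplify. Two relations are available:
\[
\lambda_0^{\sqrt{q}}=\lambda_0^{-1}, \qquad y_0^{\sqrt{q}}-1=-c^{-q}\lambda_0^{\sqrt{q}}=-c^{-1}\lambda_0^{-1}.
\]
With these, the terms become $x_0^{\sqrt{q}}=\lambda_0/c^{\sqrt{q}}$ and $-cx_0(y_0^{\sqrt{q}}-1)=1/(c\lambda_0^2)$. Hence \eqref{eq:affCc} at $P_0$ reads
\[
\frac{\lambda_0}{c^{\sqrt{q}}}+\frac{1}{c\lambda_0^2}-1+\frac{\lambda_0}{c^{\sqrt{q}}}
=\frac{2\lambda_0}{c^{\sqrt{q}}}-1+\frac{1}{c\lambda_0^2}.
\]
Multiplying through by $c^{\sqrt{q}+1}\lambda_0^2$, the point lies on the curve exactly when
\[
Q(\lambda_0):=2c\lambda_0^3-c^{\sqrt{q}+1}\lambda_0^2+c^{\sqrt{q}}=0 .
\]

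This is where multiplicity enters. The quantity $Q(\lambda_0)$ should be a combination of $P(\lambda_0)$ and $P'(\lambda_0)$, which both vanish at a multiple root. We have $P'(\lambda)=3\lambda^2-2c^{\sqrt{q}}\lambda+c^{\sqrt{q}}$. I would first try the Euler-type combination $\lambda P'(\lambda)-3P(\lambda)=c^{\sqrt{q}}\lambda^2-2c^{\sqrt{q}}\lambda+3c^{\sqrt{q}-1}$, then reduce $Q$ modulo $P$ and compare coefficients with this. The conditions $\lambda_0^{\sqrt{q}+1}=1$ and possibly $\mathrm{Nm}(c)$ should also be used if needed.

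A cleaner alternative avoids the algebra altogether. By Proposition~\ref{ext_varphi}, if $P_0$ were not on $C_c$, then, since $U$ has a unique eigen-line for $\lambda_0$ in the double-root case ($U$ is not scalar), the quadratic form restricted to the $\lambda_0$-generalized eigenspace could be analyzed directly. For $v$ with $Uv=\lambda_0v$ and $v^{(q)}=v$, one has $M^*v=\lambda_0Mv$. Transposing gives $v^{*}M v=\lambda_0^{\sqrt{q}}\,\overline{v^{*}Mv}$ in a suitable sense. Using a generalized eigenvector $w$ with $Uw=\lambda_0w+v$, the relation $w^*M^*v$ computed in two ways should force $v^*Mv=0$.

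The main obstacle is this last identity. I would attempt the generalized-eigenvector pairing argument first. If it becomes messy, I would fall back on the explicit polynomial reduction above, handling $p=3$ separately; the triple-root case $c=3$, $\lambda_0=1$ can be checked by direct substitution.
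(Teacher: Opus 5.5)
Your first assertion and your reduction of the second follow the paper's proof. You invoke Cases~1--2 of Theorem~\ref{5polys}, substitute the eigenvector of Lemma~\ref{eigenvec} into \eqref{eq:affCc} using $\lambda_0^{\sqrt{q}}=\lambda_0^{-1}$, and your expression $Q(\lambda_0)=2c\lambda_0^3-c^{\sqrt{q}+1}\lambda_0^2+c^{\sqrt{q}}$ is correct. But the proposal stops exactly at the step that proves the lemma: you only say $Q$ ``should be'' a combination of $P$ and $P'$, and you call the remaining identity the main obstacle. The missing line is the polynomial identity
\[
Q(\lambda)=c\,\bigl(\lambda P'(\lambda)-P(\lambda)\bigr),
\]
equivalently $2\lambda^3-c^{\sqrt{q}}\lambda^2+c^{\sqrt{q}-1}=\lambda P'(\lambda)-P(\lambda)$, which is precisely what the paper uses. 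Your own plan gives it at once, since $Q-2cP=c\,(\lambda P'-3P)$. The identity holds in every characteristic and needs only $P(\lambda_0)=P'(\lambda_0)=0$. So the extra cases you anticipate (a separate treatment of $p=3$, a condition on $\mathrm{Nm}(c)$, a separate check of the triple root $c=3$) are unnecessary. With it, the rational point lies on $C_c$, and Proposition~\ref{eigenvector} finishes.

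Your alternative route is also viable, but the justification you give for it is wrong. ``$U$ is not scalar'' does not make the $\lambda_0$-eigenspace a line; $\mathrm{diag}(1,1,2)$ is a counterexample. What is true is that every eigenspace of $U$ is a line, because in \eqref{systeq} a vanishing third coordinate forces $x_0=y_0=0$. Hence there is $w\in\mathbb{F}_q^3$ with $Uw=\lambda_0w+v$.
\begin{itemize}
\item Set $B(a,b)=(a^{(\sqrt{q})})^tMb$, with coordinatewise $\sqrt{q}$-th powers. For $a,b\in\mathbb{F}_q^3$ one has $B(a,b)^{\sqrt{q}}=(b^{(\sqrt{q})})^tM^*a$.
\item The two eigen-relations then give $B(v,w)^{\sqrt{q}}=\lambda_0B(w,v)$ and $B(w,v)^{\sqrt{q}}=\lambda_0B(v,w)+B(v,v)$.
\item Solve the first for $B(w,v)$, raise it to the $\sqrt{q}$-th power, and use $B(v,w)^q=B(v,w)$ and $\lambda_0^{-\sqrt{q}}=\lambda_0$. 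This gives $B(w,v)^{\sqrt{q}}=\lambda_0B(v,w)$, hence $B(v,v)=0$, i.e.\ $v\in C_c$.
\end{itemize}
This route explains conceptually why multiplicity matters: a nontrivial Jordan block forces the eigenvector to be isotropic. It is also independent of the explicit coefficients of $f_c$. The paper's direct substitution is shorter but purely computational. Either way, one of the two computations has to be carried out for your proposal to be a proof.
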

\begin{proof}
By Case 1 and 2 in the proof of Theorem \ref{5polys}, we have $\lambda_0 \in \mu_{\sqrt{q}+1}$. The eigenvector corresponding $\lambda_0$ is $(\frac{1}{c \lambda_0 }, 1-c^{-\sqrt{q}} \lambda_0 , 1)^t$ by Lemma \ref{eigenvec}. Substituting this vector in the equation of \eqref{eq:Cc}, we have
$$
\begin{aligned}
&\left( \frac{1}{c \lambda_0} \right)^{\sqrt{q}} - c \left( \frac{1}{c \lambda_0} \right) ( (1-c^{-\sqrt{q}}\lambda_0 )^{\sqrt{q}} -1) -   (1 -c^{-\sqrt{q}}\lambda_0 )\\ 
&=\left( \frac{\lambda_0}{c^{\sqrt{q}} } \right)  -   \left( \frac{1}{  \lambda_0} \right)( -c^{-1}\lambda_0 ^{\sqrt{q}}  ) -   (1 -c^{-\sqrt{q}}\lambda_0 )\\ 
&=\left( \frac{1}{c^{\sqrt{q}} \lambda_0^2 } \right) \left( 2 \lambda_0^3 
 - c^{\sqrt{q}} \lambda_0^2 + c^{\sqrt{q}-1}  \right) \\ 
&=\left( \frac{1}{c^{\sqrt{q}} \lambda_0^2 } \right) 
\left(  \lambda_0 P'_{U_c}(\lambda_0) -    P_{U_c}(\lambda_0) \right) =0,
\end{aligned}
$$
where $ P'_{U_c}(\lambda)$ denotes the derivative of $P_{U_c}(\lambda)$.

\end{proof}
\begin{lemma} \label{musq}
If $\lambda_0 \in \mathbb{F}_q^*$ is a root of $P_{U_c}(\lambda)$ and 
$\lambda_0 \in\mu_{\sqrt{q}+1}$  , then the corresponding line $y=1-c^{-\sqrt{q}} \lambda_0 \ $ is a $\sqrt{q}$-line of $C_c^\circ$.
\end{lemma}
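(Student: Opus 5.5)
Set $y_0 = 1 - c^{-\sqrt{q}}\lambda_0$, so that $c^{\sqrt{q}}(1-y_0)=\lambda_0$. This is the converse of Lemma~\ref{eigenval}(ii). By Lemma~\ref{mathcalY}\ref{sqline}, it suffices to show two things: $y_0 \in \mathcal{Y}_1$, and either $y_0 = 0$ or $y_0^{\sqrt{q}-1} + c^{\sqrt{q}} y_0 - c^{\sqrt{q}} = 0$.

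\emph{Membership in $\mathcal{Y}_1$.} We have $c(y_0^{\sqrt{q}}-1) = -(c^{\sqrt{q}}(1-y_0))^{\sqrt{q}} = -\lambda_0^{\sqrt{q}}$. Hence $\mathrm{Nm}(c(y_0^{\sqrt{q}}-1)) = \mathrm{Nm}(\lambda_0)^{\sqrt{q}} = 1$, since $\mathrm{Nm}(-1)=1$ and $\lambda_0 \in \mu_{\sqrt{q}+1}$. So $y_0 \in \mathcal{Y}_1$.

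\emph{Case $y_0 = 0$.} There is nothing more to prove.

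\emph{Case $y_0 \neq 0$.} Condition (d) reads $y_0^{\sqrt{q}-1} = c^{\sqrt{q}}(1-y_0) = \lambda_0$, so we must show $y_0^{\sqrt{q}} = \lambda_0 y_0$. Using $\lambda_0^{\sqrt{q}} = \lambda_0^{-1}$,
\[
y_0^{\sqrt{q}} = 1 - c^{-q}\lambda_0^{\sqrt{q}} = 1 - c^{-1}\lambda_0^{-1}.
\]
The target identity therefore becomes
\[
1 - c^{-1}\lambda_0^{-1} = \lambda_0 - c^{-\sqrt{q}}\lambda_0^2.
\]
Multiplying by $c^{\sqrt{q}} \lambda_0$ turns this into
\[
c^{\sqrt{q}}\lambda_0 - c^{\sqrt{q}-1} = c^{\sqrt{q}}\lambda_0^2 - \lambda_0^3,
\]
which is exactly $P_{U_c}(\lambda_0)=0$ by Lemma~\ref{prop:basic}(ii). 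Thus (d) holds, and by Lemma~\ref{mathcalY} the line $y = y_0$ is a $\sqrt{q}$-line of $C_c^\circ$.

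The only delicate point is the middle step: remembering to use the Frobenius relation $\lambda_0^{\sqrt{q}}=\lambda_0^{-1}$ together with $c^q=c$ when computing $y_0^{\sqrt{q}}$. After that, the reduction to the characteristic polynomial is mechanical.
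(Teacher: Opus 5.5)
Your proof is correct and follows essentially the same route as the paper. You reduce via Lemma~\ref{mathcalY}\ref{sqline} to the norm condition plus condition (d), check the norm using $c(y_0^{\sqrt{q}}-1)=-\lambda_0^{\sqrt{q}}$, and for $y_0\neq 0$ reduce (d) to $P_{U_c}(\lambda_0)=0$ using $\lambda_0^{\sqrt{q}}=\lambda_0^{-1}$ and $c^q=c$; your rewriting of (d) as $y_0^{\sqrt{q}}=\lambda_0 y_0$ is just a slightly tidier packaging of the paper's expansion of $y_0^{-1}\bigl(y_0^{\sqrt{q}}+c^{\sqrt{q}}y_0^2-c^{\sqrt{q}}y_0\bigr)$.
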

\begin{proof}
To show that the equation has $\sqrt{q}$ distinct roots, we apply Lemma \ref{mathcalY}. Let $y_0 = 1 - c^{-\sqrt{q}}\lambda_0$. First, we verify the norm condition of the coefficient of linear term:
$$\mathrm{Nm}(c((1-c^{-\sqrt{q}}\lambda_0)^{\sqrt{q}}-1))=\mathrm{Nm}(c( -c^{-1}\lambda_0^{\sqrt{q}} ) )= \mathrm{Nm}( -\lambda_0^{\sqrt{q}} )= 1,$$
where we use the relation $\lambda_0^{\sqrt{q}+1}=1$. According to Lemma 17 (iii), it suffices to show that either $y_0 = 0$ or $y_0^{\sqrt{q}-1} + c^{\sqrt{q}}y_0 - c^{\sqrt{q}} = 0$. Note that $y_0 = 0$ if and only if $\lambda_0 = c^{\sqrt{q}}$. In the case where $y_0 \neq 0$ (i.e., $\lambda_0 \neq c^{\sqrt{q}}$), we observe the following:$$\begin{aligned}
y_0^{\sqrt{q}-1} +c^{\sqrt{q}} y_0  - c^{\sqrt{q}}
&= \frac{1}{y_0} \left( y_0^{\sqrt{q}}+c^{\sqrt{q}} y_0^2  - c^{\sqrt{q}}y_0 \right)\\
&=\frac{1}{y_0}\left( (1-c^{-\sqrt{q}} \lambda_0)^{\sqrt{q}}+c^{\sqrt{q}} (1-c^{-\sqrt{q}} \lambda_0)^2  - c^{\sqrt{q}}(1-c^{-\sqrt{q}} \lambda_0) \right)\\
&=\frac{1}{y_0}\left(  1-c^{-1} \lambda_0^{\sqrt{q}}-\lambda_0 +c^{-\sqrt{q}}  \lambda_0^2   \right)\\
&=\frac{1}{y_0} \cdot c^{-\sqrt{q}} \lambda_0^{-1} \left(  c^{\sqrt{q}} \lambda_0 -c^{\sqrt{q}-1} \lambda_0^{\sqrt{q}+1}-c^{\sqrt{q}}\lambda_0^2 +   \lambda_0^3   \right)\\
&=\frac{1}{y_0} \cdot c^{-\sqrt{q}} \lambda_0^{-1} \left(   \lambda_0^3 -c^{\sqrt{q}}\lambda_0^2   +c^{\sqrt{q}} \lambda_0 -c^{\sqrt{q}-1}     \right)\\
&=\frac{1}{y_0} \cdot c^{-\sqrt{q}} \lambda_0^{-1} P_{U_c}(\lambda_0) = 0.
\end{aligned}$$
 Therefore, the condition in Lemma \ref{mathcalY} is satisfied, confirming that the equation possesses $\sqrt{q}$ distinct roots.
\end{proof}

\begin{lemma}
If $P_{U_c}(\lambda)$ has three distinct roots $\lambda_i$, $i=1,2,3$,  which belong to
$\mu_{\sqrt{q}+1}$, then the three corresponding lines $y=1-c^{-\sqrt{q}} \lambda_i $ are all $\sqrt{q}$-lines of $C_c^\circ$ and  $C_c$ has no inflexion.
\end{lemma}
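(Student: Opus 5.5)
The first assertion follows at once from Lemma~\ref{musq}. Each $\lambda_i$ is a root of $P_{U_c}(\lambda)$ lying in $\mu_{\sqrt{q}+1}$, so each line $y=1-c^{-\sqrt{q}}\lambda_i$ is a $\sqrt{q}$-line of $C_c^\circ$. Since the $\lambda_i$ are distinct, the values $y_i=1-c^{-\sqrt{q}}\lambda_i$ are distinct, so these are three different lines.

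For the second assertion, Lemma~\ref{prop:basic}(iii) shows that the two points of $C_c$ on $z=0$ are not inflexions. It therefore suffices to show that no affine $\mathbb{F}_q$-point is an inflexion. By Proposition~\ref{eigenvector}, an affine inflexion is an eigenvector of $U$. Because the three eigenvalues are distinct, each eigenspace is one-dimensional. Lemma~\ref{eigenvec} then shows that the only candidates are the three points
\[
Q_i=\Bigl(\tfrac{1}{c\lambda_i},\,1-c^{-\sqrt{q}}\lambda_i,\,1\Bigr)^t,\qquad i=1,2,3.
\]
The task reduces to showing that no $Q_i$ lies on $C_c$.

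Substituting $Q_i$ into \eqref{eq:Cc} and using $\lambda_i^{\sqrt{q}}=\lambda_i^{-1}$, the computation in the preceding lemma (on multiple roots) applies verbatim. That computation used only $\lambda_0\in\mu_{\sqrt{q}+1}$, not multiplicity. It gives
\[
\frac{1}{c^{\sqrt{q}}\lambda_i^{2}}\bigl(\lambda_i P'_{U_c}(\lambda_i)-P_{U_c}(\lambda_i)\bigr)
=\frac{\lambda_i P'_{U_c}(\lambda_i)}{c^{\sqrt{q}}\lambda_i^{2}}.
\]
Since $\lambda_i$ is a simple root, $P'_{U_c}(\lambda_i)\neq 0$, and $\lambda_i\neq 0$. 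Hence the value is nonzero and $Q_i\notin C_c$, so $C_c$ has no inflexion.

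The step that needs care is the claim that the earlier substitution computation is valid for simple roots. I will recheck that the simplification $(1-c^{-\sqrt{q}}\lambda_i)^{\sqrt{q}}-1=-c^{-1}\lambda_i^{\sqrt{q}}$ and the reduction to $2\lambda_i^3-c^{\sqrt{q}}\lambda_i^2+c^{\sqrt{q}-1}$ use only $\lambda_i^{\sqrt{q}+1}=1$ and $P_{U_c}(\lambda_i)=0$. I will also check the identity $\lambda P'-P=2\lambda^3-c^{\sqrt{q}}\lambda^2+c^{\sqrt{q}-1}$, which follows directly from the formula for $P_{U_c}(\lambda)$ in Lemma~\ref{prop:basic}(ii).
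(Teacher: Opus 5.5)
Your proof is correct. The first assertion matches the paper, but for the absence of inflexions you take a genuinely different route.

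\textbf{The paper's argument} is global. It uses the three distinct $\sqrt{q}$-lines to get $N_q(C_c)=q+2\sqrt{q}+1=(\sqrt{q}+1)^2$. This saturates the Bézout bound of Lemma~\ref{mirror-inter}(i) for $C_{M_c}\cdot C_{M_c^*}$. Hence every rational point has intersection multiplicity exactly $1$, whereas an inflexion would force multiplicity at least $\sqrt{q}+1$ by Lemma~\ref{mirror-inter}(ii).

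\textbf{Your argument} is local. The only candidate rational inflexions are the eigenvector points $Q_i$. The curve equation evaluated at $Q_i$ equals $\lambda_iP'_{U_c}(\lambda_i)/(c^{\sqrt{q}}\lambda_i^{2})$, which is nonzero because $\lambda_i$ is a simple root. The point you flagged for rechecking holds, and in fact more strongly than you state. The reduction to $2\lambda_i^3-c^{\sqrt{q}}\lambda_i^2+c^{\sqrt{q}-1}$ uses only $\lambda_i^{\sqrt{q}+1}=1$ and $c^q=c$, not $P_{U_c}(\lambda_i)=0$. The relation $\lambda P'-P=2\lambda^3-c^{\sqrt{q}}\lambda^2+c^{\sqrt{q}-1}$ is a polynomial identity. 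Also, the one-dimensionality of eigenspaces is not needed, since Lemma~\ref{eigenvec} already pins down the $z=1$ eigenvector for each eigenvalue.

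\textbf{What your route buys.} It avoids Bézout, the mirror curve, and the implicit check that $C_c$ is not Hermitian. It also yields a sharper structural fact: for a root $\lambda_0\in\mu_{\sqrt{q}+1}$ of $f_c$, the eigenvector point is an inflexion if and only if $\lambda_0$ is a multiple root. Together with the preceding lemma, this directly explains why the simple $\mu_{\sqrt{q}+1}$-root in cases (III) and (IV) of Theorem~\ref{Cctable} contributes no inflexion.

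\textbf{What the paper's route buys.} It produces $N_q(C_c)=q+2\sqrt{q}+1$ along the way, and that count is needed for the type $(q+2\sqrt{q}+1,0)$. You can recover it from your three distinct lines via Lemma~\ref{mathcalY}(iv), using $s\le 3$ from Lemma~\ref{eigenval}(ii).
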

\begin{proof}
By Lemma \ref{musq}, the three corresponding lines are $\sqrt{q}$-lines of $C_c^\circ$, and hence $N_q(C_c) = (q - \sqrt{q} + 1) + 3\sqrt{q}=q +2 \sqrt{q} + 1 $. By Lemma \ref{mirror-inter} (i), for each $P\in C_c(\mathbb{F}_q)$, we have $i(C_{M_c}.C_{M_c^*};P)=1$, hence $P$ cannot be an inflexion by Lemma \ref{mirror-inter} (ii). Here, $C_{M_c}$ denotes the curve corresponding to the matrix $M_c$, which is identified with $C_c$.
\end{proof}
\begin{lemma}
Suppose that $P_{U_c}(\lambda)$ has three distinct roots, exactly one of which belongs to $\mu_{\sqrt{q}+1}$. Then the two eigenvectors corresponding to the other two eigenvalues are inflexions of $C_c$. 
\end{lemma}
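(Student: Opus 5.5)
The plan is to show directly that each of the two eigenvectors lies on $C_c$. Proposition~\ref{eigenvector} then finishes the argument, since an $\mathbb{F}_q$-rational point of $C_c$ that is an eigenvector of $U=M^{-1}M^*$ is an inflexion. Let $\beta,\gamma$ be the two eigenvalues not in $\mu_{\sqrt{q}+1}$. By the proof of Theorem~\ref{5polys} (Subcase 3.2) they lie in $\mathbb{F}_q^*$ and satisfy $\beta^{\sqrt{q}}=\gamma^{-1}$.

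\textbf{Step 1 (rationality).} By Lemma~\ref{eigenvec}, the eigenvector for $\beta$ is
$$v=\Bigl(\tfrac{1}{c\beta},\,1-c^{-\sqrt{q}}\beta,\,1\Bigr)^t.$$
Its entries lie in $\mathbb{F}_q$ because $\beta,c\in\mathbb{F}_q^*$. So $v$ represents an $\mathbb{F}_q$-point, and $v^{(q)}=v$.

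\textbf{Step 2 (key identity, general for any rational eigenvector).} I would avoid substituting $v$ into \eqref{eq:Cc}, which is an unpleasant computation. Instead, set $s:=(v^{(\sqrt{q})})^t M v\in\mathbb{F}_q$, so that $v\in C_c$ iff $s=0$. From $Uv=\beta v$ we get $M^*v=\beta Mv$. Multiplying on the left by $(v^{(\sqrt{q})})^t$ gives
$$(v^{(\sqrt{q})})^tM^*v=\beta s.$$
On the other hand, apply the $\sqrt{q}$-power Frobenius to $s$ and use $v^{(q)}=v$:
$$s^{\sqrt{q}}=v^t M^{(\sqrt{q})} v^{(\sqrt{q})}.$$
Transposing this scalar gives $(v^{(\sqrt{q})})^t (M^{(\sqrt{q})})^t v=(v^{(\sqrt{q})})^tM^*v$. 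Hence
$$s^{\sqrt{q}}=\beta s.$$

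\textbf{Step 3 (conclusion).} If $s\neq0$, then $s^{\sqrt{q}-1}=\beta$. Taking norms gives $\mathrm{Nm}(\beta)=s^{q-1}=1$, i.e.\ $\beta\in\mu_{\sqrt{q}+1}$. This is impossible: otherwise $\beta^{-1}=\beta^{\sqrt{q}}=\gamma^{-1}$, forcing $\beta=\gamma$, whereas the roots are distinct. Hence $s=0$ and $v\in C_c(\mathbb{F}_q)$. By Proposition~\ref{eigenvector}, $v$ is an inflexion. The same argument applies to $\gamma$, and the two eigenvectors are distinct points because their eigenvalues differ.

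The main obstacle is Step 2: getting the Frobenius and transpose bookkeeping for $M^*$ exactly right, including checking that $(M^{(\sqrt{q})})^t=M^*$ with the paper's convention. As a fallback, I would verify $s=0$ by direct substitution into \eqref{eq:affCc}, reducing with $P_{U_c}(\beta)=0$ and $\beta^{\sqrt{q}}=\gamma^{-1}$ together with Vieta's relations for the roots of $P_{U_c}$.
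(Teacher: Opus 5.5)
Your proof is correct, and it takes a genuinely different route from the paper.

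The paper substitutes the explicit eigenvector $v_2=(\frac{1}{c\lambda_2},1-c^{-\sqrt{q}}\lambda_2,1)^t$ from Lemma~\ref{eigenvec} into \eqref{eq:Cc}. It pulls out the factor $c^{-\sqrt{q}}\lambda_2^{\sqrt{q}-1}$, and then makes the remaining bracket vanish using $\lambda_2^{-\sqrt{q}}=\lambda_3$ together with the Vieta relations $\lambda_1+\lambda_2+\lambda_3=c^{\sqrt{q}}$ and $\lambda_1\lambda_2\lambda_3=c^{\sqrt{q}-1}$.

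You instead take the value $s$ of the defining form at $v$ and show $s^{\sqrt{q}}=\beta s$. Your bookkeeping is right on both counts:
\begin{itemize}
\item The $(i,j)$ entry of $M^*$ is the $\sqrt{q}$-th power of the $(j,i)$ entry of $M$. This is the paper's convention, as you can confirm against the formula for $U$ in Lemma~\ref{prop:basic}.
\item $v^{(q)}=v$ holds because $\beta\in\mathbb{F}_q^*$, by Subcase 3.2 of Theorem~\ref{5polys}.
\end{itemize}

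What your approach buys is generality, and it removes the computation entirely. It uses nothing about the normal form $C_c$ or the partner root $\gamma$, only that $\beta\in\mathbb{F}_q^*\setminus\mu_{\sqrt{q}+1}$. So it shows that for every $A\in GL(3,\mathbb{F}_q)$, each $\mathbb{F}_q$-rational eigenvector of $A^{-1}A^*$ whose eigenvalue is not a $(\sqrt{q}+1)$-th root of unity lies on $C_A$, and is therefore a rational inflexion. For instance, this immediately recovers the two inflexions in Proposition~\ref{I_qCa0}(ii), whose eigenvalues have norms $\neq 1$. The paper's calculation, by contrast, is tied to $C_c$ and genuinely uses the pairing $\lambda_2^{\sqrt{q}}=\lambda_3^{-1}$.

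One small remark: in Step 3 the detour through $\beta^{\sqrt{q}}=\gamma^{-1}$ is unnecessary, since $\beta\notin\mu_{\sqrt{q}+1}$ is already part of the hypothesis.
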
\begin{proof}
Let $\lambda_1, \lambda_2, \lambda_3$ be the three distinct eigenvalues, and assume $\lambda_1 \in \mu_{\sqrt{q}+1}$. According to Theorem \ref{5polys}, this implies $\lambda_2^{\sqrt{q}} = \lambda_3^{-1}$ (and similarly $\lambda_3^{\sqrt{q}} = \lambda_2^{-1}$). From the relation between roots and coefficients of $P_{U_c}(\lambda) = (\lambda-\lambda_1)(\lambda-\lambda_2)(\lambda-\lambda_3)$, we obtain the following system of equations:
\begin{equation}\begin{cases}
\lambda_1 + \lambda_2 + \lambda_3 = c^{\sqrt{q}}, \\ \lambda_1(\lambda_2 + \lambda_3) + \lambda_2\lambda_3 = c^{\sqrt{q}}, \\ \lambda_1\lambda_2\lambda_3 = c^{\sqrt{q}-1}.
\end{cases}\end{equation}
By Lemma \ref{eigenvec}, the eigenvector corresponding to $\lambda_i$ is given by $v_i = ( \frac{1}{c\lambda_i}, 1-c^{-\sqrt{q}}\lambda_i, 1 )^t$. Since the proofs for $i=2$ and $i=3$ are analogous, we only provide the proof for $i=2$. Substituting $v_2$ into the curve equation \eqref{eq:Cc}, we have:
$$\begin{aligned}
&(c \lambda_2 )^{-\sqrt{q}} - \lambda_2^{-1} (1-c^{-1} \lambda_2^{\sqrt{q}}) + \lambda_2^{-1} - (1- c^{-\sqrt{q}}\lambda_2) \\
&= c^{-\sqrt{q}}\lambda_2^{-\sqrt{q}} + c^{-1} \lambda_2^{\sqrt{q}-1} - 1 + c^{-\sqrt{q}}\lambda_2 \\
&= c^{-\sqrt{q}}\lambda_2^{\sqrt{q}-1} \left( \lambda_2^{-2\sqrt{q}+1} + c^{\sqrt{q}-1} - c^{\sqrt{q}}\lambda_2^{-\sqrt{q}+1} + \lambda_2^{-\sqrt{q}+2} \right)\\
&= c^{-\sqrt{q}}\lambda_2^{\sqrt{q}-1} \left( \lambda_2 \lambda_3^2 + c^{\sqrt{q}-1} - c^{\sqrt{q}}\lambda_2 \lambda_3 + \lambda_2^2 \lambda_3 \right)\\
&= c^{-\sqrt{q}}\lambda_2^{\sqrt{q}-1} \left( \lambda_2 \lambda_3 (\lambda_2 + \lambda_3 - c^{\sqrt{q}}) + c^{\sqrt{q}-1} \right)\\
&= c^{-\sqrt{q}}\lambda_2^{\sqrt{q}-1} \left( -\lambda_1 \lambda_2 \lambda_3 + c^{\sqrt{q}-1} \right) = 0.
\end{aligned}$$
The substitution $\lambda_2^{-\sqrt{q}} = \lambda_3$ and the relation $\lambda_1 + \lambda_2 + \lambda_3 = c^{\sqrt{q}}$ were used in the final steps. Thus, $v_2$ is an inflexion point of $C_c$.
\end{proof}

\noindent Up to this point, we have used $P_{U_c}(\lambda)$ to denote the characteristic polynomial; however, hereafter we shall adopt the notation $f_c(\lambda)= \lambda^3 -c^{\sqrt{q}}\lambda^2 +c^{\sqrt{q}}\lambda -c^{\sqrt{q}-1} $ to emphasize our focus on the parameter $c$.

\begin{lemma}\label{deg2=1}
Let $\alpha, \beta, \gamma \in \mu_{\sqrt{q}+1}$. If $\alpha + \beta + \gamma = c^{\sqrt{q}} = \alpha \beta + \beta \gamma + \alpha \gamma$ for some $c \in \mathbb{F}_q^*$, then it necessarily follows that $\alpha \beta \gamma = c^{\sqrt{q}-1}$. Consequently, such $\alpha, \beta$, and $\gamma$ are precisely the roots of the polynomial $f_c$.
\end{lemma}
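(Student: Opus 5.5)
Set $s_1=\alpha+\beta+\gamma$, $s_2=\alpha\beta+\beta\gamma+\gamma\alpha$ and $s_3=\alpha\beta\gamma$. The plan is to apply the Frobenius $u\mapsto u^{\sqrt{q}}$ to $s_1$ and use that each of $\alpha,\beta,\gamma$ lies in $\mu_{\sqrt{q}+1}$, so that $\alpha^{\sqrt{q}}=\alpha^{-1}$, and likewise for $\beta$ and $\gamma$. This gives
\[
s_1^{\sqrt{q}}=\alpha^{-1}+\beta^{-1}+\gamma^{-1}=\frac{s_2}{s_3}.
\]
Note that $s_3\neq 0$, since roots of unity are nonzero.

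By hypothesis $s_1=s_2=c^{\sqrt{q}}$. Hence $s_1^{\sqrt{q}}=c^{q}=c$, because $c\in\mathbb{F}_q$. Combined with the previous identity this yields $c=c^{\sqrt{q}}/s_3$. Since $c\neq 0$, we can solve for $s_3$ and obtain $s_3=c^{\sqrt{q}-1}$, which is the asserted relation $\alpha\beta\gamma=c^{\sqrt{q}-1}$.

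Finally, expand $(\lambda-\alpha)(\lambda-\beta)(\lambda-\gamma)=\lambda^3-s_1\lambda^2+s_2\lambda-s_3$. Substituting the three values just established turns this into $\lambda^3-c^{\sqrt{q}}\lambda^2+c^{\sqrt{q}}\lambda-c^{\sqrt{q}-1}=f_c(\lambda)$. Therefore $\alpha,\beta,\gamma$ are exactly the roots of $f_c$, counted with multiplicity.

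The argument is elementary throughout. The only point needing care is the identity $c^{q}=c$, which holds because $c\in\mathbb{F}_q^{*}$; this is what allows $s_1^{\sqrt{q}}$ to be rewritten in terms of $c$.
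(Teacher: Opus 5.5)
Your proposal is correct and follows essentially the same route as the paper: apply the $\sqrt{q}$-power Frobenius to $\alpha+\beta+\gamma$, use $\alpha^{\sqrt{q}}=\alpha^{-1}$ (and likewise for $\beta,\gamma$) to identify the result with $s_2/s_3$, and use $c^{q}=c$ and $c\neq 0$ to get $\alpha\beta\gamma=c^{\sqrt{q}-1}$. Your explicit Vieta expansion $(\lambda-\alpha)(\lambda-\beta)(\lambda-\gamma)=f_c(\lambda)$ supplies the ``consequently'' step that the paper leaves implicit.
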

\begin{proof}
Let $\alpha\beta\gamma =d$. We have 
$$ \frac{1}{\alpha} + \frac{1}{\beta} + \frac{1}{\gamma} = \alpha^{\sqrt{q}} +   \beta^{\sqrt{q}} +  \gamma^{\sqrt{q}} = (\alpha+\beta+\gamma)^{\sqrt{q}} =(c^{\sqrt{q}})^{\sqrt{q}} =c.$$ 
On the other hand, we have 
$$ \frac{c^{\sqrt{q}}}{d} = \frac{\alpha\beta +\beta\gamma+\alpha\gamma}{\alpha\beta\gamma} =  \frac{1}{\alpha} + \frac{1}{\beta} + \frac{1}{\gamma} =c.$$
Thus $\alpha\beta\gamma =d= \frac{c^{\sqrt{q}}}{c} =c^{\sqrt{q}-1}$.

\end{proof}
Recall that $p$ denotes the characteristic of $\mathbb{F}_q$, i.e., $q$ is an even power of $p$.
\begin{lemma} \label{doublealpha} Let $p\ne 3$. For any  $\alpha \in \mu_{\sqrt{q}+1}$, 
there exists a unique element $c \in \mathbb{F}_q^*$ such that $\alpha$ is a double root of the polynomial $f_c$.
Moreover, the remaining root of $f_c(\lambda)$ is given by $\frac{\alpha(2-\alpha)}{2\alpha-1}$, which also belongs to $\mu_{\sqrt{q}+1}$.
\end{lemma}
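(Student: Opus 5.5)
The plan is to translate ``$\alpha$ is a double root of $f_c$'' into the two equations $f_c(\alpha)=0$ and $f_c'(\alpha)=0$, and to solve these for $c$. Set $t=c^{\sqrt{q}}$, so $c^{\sqrt{q}-1}=t/c$, and write
$$f_c(\lambda)=\lambda^3-t\lambda^2+t\lambda-t/c,\qquad f_c'(\lambda)=3\lambda^2-2t\lambda+t.$$

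First, the derivative condition $f_c'(\alpha)=0$ is linear in $t$ and gives $t(2\alpha-1)=3\alpha^2$. We need $2\alpha\neq1$. If $2\alpha=1$, then $3\alpha^2=0$, which forces $\alpha=0$ because $p\neq3$; this is absurd. (Here $p\ne2$ holds automatically, since otherwise $2\alpha-1=-1$ is nonzero anyway.) So $t=3\alpha^2/(2\alpha-1)$. Since $t\mapsto t^{\sqrt{q}}$ is a bijection of $\mathbb{F}_q$, $c$ is determined uniquely by $c=t^{\sqrt{q}}$, and $c\ne0$ because $\alpha\ne0$ and $p\ne3$. This gives uniqueness, provided such a $c$ exists at all.

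Second, we check existence, i.e.\ that $f_c(\alpha)=0$ for this $c$. Rather than verify this directly, it is easier to argue through Lemma~\ref{deg2=1}. Define $\gamma=\alpha(2-\alpha)/(2\alpha-1)$ and consider the triple $(\alpha,\alpha,\gamma)$. Then
$$2\alpha+\gamma=\frac{2\alpha(2\alpha-1)+\alpha(2-\alpha)}{2\alpha-1}=\frac{3\alpha^2}{2\alpha-1}=t,$$
$$\alpha^2+2\alpha\gamma=\frac{\alpha^2(2\alpha-1)+2\alpha^2(2-\alpha)}{2\alpha-1}=\frac{3\alpha^2}{2\alpha-1}=t.$$
So the first two elementary symmetric functions both equal $c^{\sqrt{q}}$.

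It remains to check $\gamma\in\mu_{\sqrt{q}+1}$; this is the only real step. Using $\alpha^{\sqrt{q}}=\alpha^{-1}$, we get
$$\gamma^{\sqrt{q}}=\frac{\alpha^{-1}(2-\alpha^{-1})}{2\alpha^{-1}-1}=\frac{2\alpha-1}{\alpha(2-\alpha)}=\gamma^{-1}.$$
This requires $2-\alpha\ne0$, and it fails when $\alpha=2$. In that case $\mathrm{Nm}(2)=4=1$, so $p=3$, which is excluded. Also $\gamma\neq0$ for the same reason. Hence $\gamma\in\mu_{\sqrt{q}+1}$.

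Now Lemma~\ref{deg2=1} applies to $\alpha,\alpha,\gamma\in\mu_{\sqrt{q}+1}$. Its proof never uses distinctness, so it gives $\alpha^2\gamma=c^{\sqrt{q}-1}$ and $f_c(\lambda)=(\lambda-\alpha)^2(\lambda-\gamma)$. Therefore $\alpha$ is a root of multiplicity at least two. When $\gamma=\alpha$, which happens iff $\alpha=1$ and forces $c=3$, it is a triple root, consistent with Theorem~\ref{5polys}(i). The remaining root is $\gamma=\frac{\alpha(2-\alpha)}{2\alpha-1}\in\mu_{\sqrt{q}+1}$, as claimed.

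The main care points are the nonvanishing checks $2\alpha\ne1$ and $\alpha\ne2$, both of which rest on $p\ne3$.
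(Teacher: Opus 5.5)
Your proof follows the paper's route: existence by defining $\gamma=\alpha(2-\alpha)/(2\alpha-1)$, checking that $\gamma\in\mu_{\sqrt{q}+1}$ and that $2\alpha+\gamma=\alpha^2+2\alpha\gamma=3\alpha^2/(2\alpha-1)$, then applying Lemma~\ref{deg2=1} to $(\alpha,\alpha,\gamma)$. Your uniqueness via $f_c'(\alpha)=0$ is a minor variant of the paper's step, which eliminates the third root $\beta$ from $2\alpha+\beta=c^{\sqrt{q}}=\alpha^2+2\alpha\beta$. Both pin down $c^{\sqrt{q}}=3\alpha^2/(2\alpha-1)$.

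There is one step that does not work as written: your justification of $2\alpha\neq 1$ is circular in the existence half. You derive the contradiction from $t(2\alpha-1)=3\alpha^2$, and that equation only holds once some $c$ with $f_c'(\alpha)=0$ is already known to exist. That is legitimate for uniqueness. For existence, however, you divide by $2\alpha-1$ to define both $t$ and $\gamma$. At that point your argument shows only that if $\alpha=1/2$ then no suitable $c$ exists, which would refute the lemma rather than prove it. So you need $2\alpha\neq 1$ as an unconditional property of $\alpha\in\mu_{\sqrt{q}+1}$ when $p\neq 3$.

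It follows exactly like your check that $\alpha\neq 2$. If $p=2$, then $2\alpha-1=1\neq 0$. If $p>3$, then $1/2\in\mathbb{F}_p$ has $\mathrm{Nm}(1/2)=1/4\neq 1$, so $1/2\notin\mu_{\sqrt{q}+1}$. Equivalently, $\mu_{\sqrt{q}+1}$ is a group and you showed $2\notin\mu_{\sqrt{q}+1}$, so $2^{-1}\notin\mu_{\sqrt{q}+1}$ either. The paper makes this observation at the outset ("$2,2^{-1}\notin\mu_{\sqrt{q}+1}$"). With that line inserted, your argument is complete, including your remark that $\alpha=1$ gives the triple root at $c=3$.
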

\begin{proof}
Note that $2^{\sqrt{q}+1}=2^2 \ne 1$ since $p>3$, and hence $2, 2^{-1} \not\in \mu_{\sqrt{q}+1}$. Thus $\frac{\alpha(2-\alpha)}{2\alpha-1}$ is nonzero element in $\mathbb{F}_q$.
First, we observe that
$$\left( \frac{\alpha(2-\alpha)}{2\alpha-1} \right)^{\sqrt{q}} = \frac{\alpha^{\sqrt{q}}(2-\alpha^{\sqrt{q}})}{2\alpha^{\sqrt{q}}-1} = \frac{\alpha^{-1}(2-\alpha^{-1})}{2\alpha^{-1}-1} = \frac{2\alpha-1}{\alpha(2-\alpha)} = \left(\frac{\alpha(2-\alpha)}{2\alpha-1} \right)^{-1},$$
which implies that $\frac{\alpha(2-\alpha)}{2\alpha-1} \in \mu_{\sqrt{q}+1}$. Furthermore, since
$$2\alpha + \frac{\alpha(2-\alpha)}{2\alpha-1} = \frac{3\alpha^2}{2\alpha-1} \quad \text{and} \quad \alpha^2 + 2\alpha \left( \frac{\alpha(2-\alpha)}{2\alpha-1} \right) = \frac{3\alpha^2}{2\alpha-1},$$
the conditions of Lemma \ref{deg2=1} are satisfied. Consequently, the polynomial $f_c$ with $c = \left( \frac{3\alpha^2}{2\alpha-1} \right)^{\sqrt{q}}$ (or $c^{\sqrt{q}} = \frac{3\alpha^2}{2\alpha-1}$) possesses $\alpha$ as a double root and $\frac{\alpha(2-\alpha)}{2\alpha-1}$ as the third root.

Let $f_{c'}$ be a polynomial with $c' \in \mathbb{F}_q$ such that $f_{c'}$ has $\alpha$ as a double root, and let $\beta$ denote the remaining root. Then, the relations between the roots and coefficients imply:
$$2\alpha + \beta = c'^{\sqrt{q}} = \alpha^2 + 2\alpha\beta.
$$
From this equality, we obtain $\beta(2\alpha - 1) = 2\alpha - \alpha^2$, which yields:
$$\beta = \frac{\alpha(2-\alpha)}{2\alpha-1}.$$
This value of $\beta$ is identical to the one derived previously. Consequently, it follows that $c' = c$, establishing the uniqueness of the polynomial $f_c$ that possesses a given $\alpha \in \mu_{\sqrt{q}+1}$ as a double root.
\end{proof}
%
\begin{lemma}\label{p3doubleroot}
When $p=3$, if the polynomial $f_c$ (with $c \neq 0$) possesses a double root $\alpha$, then it must be that 
$\alpha = 2$. Specifically, for $\alpha=2 \in \mu_{\sqrt{q}+1}$ and any $\beta \in \mu_{\sqrt{q}+1} \setminus \{2\}$, the 
polynomial $f_c$ with $c = (1+\beta)^{\sqrt{q}}$ has $2$ as a double root and $\beta$ as the remaining simple root. Note that 
the case $\beta=2$ is excluded because it leads to $c=0$, which contradicts our assumption that $c \in \mathbb{F}_q^*$.
\end{lemma}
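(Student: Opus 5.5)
The plan is to mirror the proof of Lemma~\ref{doublealpha}, specialised to characteristic $3$. Suppose $f_c$ has a double root $\alpha$ and a remaining root $\beta$, where possibly $\beta=\alpha$. By Cases~1 and~2 in the proof of Theorem~\ref{5polys}, both $\alpha$ and $\beta$ lie in $\mu_{\sqrt{q}+1}$. Comparing coefficients of $f_c(\lambda)=(\lambda-\alpha)^2(\lambda-\beta)$ with $\lambda^3-c^{\sqrt{q}}\lambda^2+c^{\sqrt{q}}\lambda-c^{\sqrt{q}-1}$ gives
\[
2\alpha+\beta=c^{\sqrt{q}}=\alpha^2+2\alpha\beta .
\]
Eliminating $c^{\sqrt{q}}$ yields $\beta(2\alpha-1)=2\alpha-\alpha^2$. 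In characteristic $3$ we have $2=-1$, so this reads $-\beta(\alpha+1)=-\alpha(\alpha+1)$, that is, $(\alpha+1)(\beta-\alpha)=0$.

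We then split according to which factor vanishes.

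\textbf{Case $\beta\ne\alpha$.} Then $\alpha+1=0$, so $\alpha=-1=2$.

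\textbf{Case $\beta=\alpha$.} Then $\alpha$ is a triple root and $f_c=(\lambda-\alpha)^3=\lambda^3-\alpha^3$ in characteristic $3$. Matching coefficients forces $c^{\sqrt{q}}=0$, contradicting $c\ne0$. Hence a triple root cannot occur, which is consistent with Theorem~\ref{5polys}(i) requiring $p\ne3$.

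In every case, therefore, $\alpha=2$. We should also check that $2\in\mu_{\sqrt{q}+1}$: indeed $2^{\sqrt{q}+1}=(-1)^{\sqrt{q}+1}=1$, since $\sqrt{q}$ is odd when $p=3$.

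For the converse, fix $\beta\in\mu_{\sqrt{q}+1}\setminus\{2\}$ and set $c=(1+\beta)^{\sqrt{q}}$. We have $c\ne0$ because $\beta\ne-1=2$. Also $c^{\sqrt{q}}=(1+\beta)^q=1+\beta$. We apply Lemma~\ref{deg2=1} to the triple $2,2,\beta$, all of which lie in $\mu_{\sqrt{q}+1}$:
\[
2+2+\beta=4+\beta=1+\beta, \qquad 2\cdot2+2\beta+2\beta=4+4\beta=1+\beta .
\]
Both expressions equal $c^{\sqrt{q}}$. Lemma~\ref{deg2=1} then gives that $2,2,\beta$ are exactly the roots of $f_c$. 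So $2$ is a double root (a genuine double root, since $\beta\ne2$) and $\beta$ is the simple root. Finally, if $\beta=2$ we would get $c=(3)^{\sqrt{q}}=0$, which explains the exclusion.

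The only step needing care is the elimination in characteristic $3$, together with ruling out a triple root. Both are short computations, so no serious obstacle is expected.
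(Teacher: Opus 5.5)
Your proof is correct, but its key step differs from the paper's. The paper gets $\alpha=2$ from the derivative. In characteristic $3$ the term $3\lambda^2$ of $f_c'$ vanishes, so $f_c'(\lambda)=c^{\sqrt{q}}(1-2\lambda)$. Then $f_c'(\alpha)=0$ with $c\neq 0$ gives $\alpha=2^{-1}=2$ at once, for any multiple root, so no split into double versus triple root is needed.

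You instead specialise the Vieta elimination of Lemma~\ref{doublealpha}. This produces $(\alpha+1)(\beta-\alpha)=0$, which is vacuous when $\beta=\alpha$, so you must rule out the triple root separately, as you correctly do. Your route is a little longer, but it shows exactly how the general formula $\beta=\alpha(2-\alpha)/(2\alpha-1)$ degenerates in characteristic $3$.

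For the converse, the paper expands $(\lambda+1)^2(\lambda-\beta)=\lambda^3-(1+\beta)\lambda^2+(1+\beta)\lambda-\beta$ and matches all three coefficients. In the forward direction this also yields $\beta=c^{\sqrt{q}-1}\in\mu_{\sqrt{q}+1}$ directly. You instead invoke Lemma~\ref{deg2=1}, so only the first two symmetric functions need checking, given the hypothesis $\beta\in\mu_{\sqrt{q}+1}$.

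Your citation of Theorem~\ref{5polys} to place $\alpha,\beta$ in $\mu_{\sqrt{q}+1}$ is harmless, but it is never actually used in the first part.
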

\begin{proof}
If $\alpha$ is a double root of some $f_c$ with $c\neq 0$, then $0=f_c'(\alpha) = -2c^{\sqrt{q}} \alpha + c^{\sqrt{q}}$, which implies that
$\alpha =2$.
Note that $2 \in \mu_{\sqrt{q}+1}$.

Let $f_c$ have $2$ as a double root and $\beta$ as the third root. Then:
$$f_c(\lambda) = (\lambda - 2)^2 (\lambda - \beta) = (\lambda + 1)^2 (\lambda - \beta) = \lambda^3 - (1+\beta)\lambda^2 + (1+\beta)\lambda - \beta.$$
Comparing coefficients yields $c^{\sqrt{q}} = 1+\beta$ and $\beta=c^{\sqrt{q}-1}$. Since $c \in \mathbb{F}_q^*$, it follows that $\beta \in \mu_{\sqrt{q}+1}$.
Indeed, one can verify that $2, 2$, and $\beta$ are the roots of $f_c$ with $c = (1+\beta)^{\sqrt{q}}=1+\beta^{\sqrt{q}}$.
\end{proof}
\begin{remark}
The polynomial $f_c$
has a triple root if and only if 
$\alpha =\frac{\alpha(2-\alpha)}{2\alpha-1}$, which is equivalent to $\alpha =1$ since $\alpha\ne 0$. In fact, when $c=3$, we have
$f_c(\lambda)= \lambda^3 -c^{\sqrt{q}} \lambda^2 + c^{\sqrt{q}} \lambda -c^{\sqrt{q}-1} =  \lambda^3 -3 \lambda^2 + 3 \lambda -1 =(\lambda-1)^3 $.
\end{remark}
Recall that $\omega$ is a primitive element of $\mathbb{F}_q$. We obtain the following result:
\begin{lemma}\label{prodsum1}
The solutions to the system of equations
$$\begin{cases} \alpha_1\alpha_2 = 1 \\ \alpha_1 + \alpha_2 = 1 \end{cases}$$
over $\mathbb{F}_q$ are given as follows:
\[
\{
\alpha_1 , \alpha_2
\}
=
\begin{cases}
\{ \omega^{\frac{q-1}{3}}, \omega^{\frac{2(q-1)}{3}} \} & p=2 \\
\{ -1, -1 \} & p=3 \\
\{ \omega^{\frac{q-1}{6}}, \omega^{\frac{5(q-1)}{6}} \} & p \neq 2, 3 .
\end{cases}
\]
\end{lemma}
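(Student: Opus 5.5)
The plan is to reduce the system to a single quadratic and then locate its roots inside the cyclic group $\mathbb{F}_q^*=\langle\omega\rangle$. By Vieta, $\alpha_1,\alpha_2$ satisfy the system exactly when they are the two roots (with multiplicity) of
\[
X^2 - X + 1 = 0 .
\]
So it suffices to identify the roots of $X^2-X+1$ in $\overline{\mathbb{F}}_q$. We also check that they lie in $\mathbb{F}_q$; this holds because the polynomial splits over $\mathbb{F}_{p^2}\subseteq\mathbb{F}_q$, since $q$ is an even power of $p$. We then split into the three cases of the statement according to the characteristic.

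For $p=3$ the quadratic is $X^2-X+1=X^2+2X+1=(X+1)^2$. Its only root is $-1$, with multiplicity two, giving $\{-1,-1\}$.

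For $p\neq 3$, multiply by $X+1$ to get $(X+1)(X^2-X+1)=X^3+1$. Any root $\alpha$ therefore satisfies $\alpha^3=-1$, so $\alpha^6=1$. Two subcases follow.

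\begin{itemize}
\item For $p=2$ we have $-1=1$, so $X^2-X+1=X^2+X+1$, whose roots are exactly the primitive cube roots of unity. Since $3\mid q-1$ (as $q=4^k$), these are $\omega^{(q-1)/3}$ and $\omega^{2(q-1)/3}$.
\item For $p\neq 2,3$, the roots of $X^2-X+1$ are the elements of order exactly $6$. Indeed, $\alpha^3=-1\neq 1$ rules out orders dividing $3$. Also $\alpha^2=1$ is impossible: $\alpha=1$ gives the value $1$, and $\alpha=-1$ gives the value $3$, both nonzero. Next, $6\mid q-1$, since $q$ is an odd square (hence $q\equiv 1\bmod 8$) and $q\equiv 1\bmod 3$ because $p\neq 3$. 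Hence the elements of order $6$ are $\omega^{(q-1)/6}$ and $\omega^{5(q-1)/6}$.
\end{itemize}

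In each case the pair is distinct and has product $1$, because the exponents sum to $q-1$. This is consistent with the system.

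No step is a genuine obstacle. The only point needing care is the divisibility checks $3\mid q-1$ and $6\mid q-1$, which justify writing the roots as powers of $\omega$. We also need to confirm that no root of order $1$ or $2$ sneaks in for $p\neq 2,3$; this is settled by evaluating $X^2-X+1$ at $\pm1$, as done above.
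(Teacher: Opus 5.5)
Your argument is correct and essentially matches the paper. The paper's proof is only the word ``Obvious,'' and your Vieta reduction to the roots of $X^2-X+1$ supplies exactly the omitted details: the split into $(X+1)^2$ for $p=3$, elements of order $3$ for $p=2$, and elements of order $6$ for $p\neq 2,3$, together with the checks $3\mid q-1$ and $6\mid q-1$.
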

\begin{proof}  Obvious.
\end{proof}
\begin{lemma}\label{sum1prod1}
Let $\alpha_1$ and $\alpha_2$ be distinct elements in $\mathbb{F}_q^*$ satisfying
$\{\alpha_1^{\sqrt{q}}, \alpha_2^{\sqrt{q}}\} = \{\alpha_1^{-1}, \alpha_2^{-1}\}$ and $\alpha_1 + \alpha_2 = 1$.
Then $\alpha_1 \alpha_2 = 1$. In the case where $\alpha_1^{\sqrt{q}}=\alpha_2^{-1}$ and $\alpha_2^{\sqrt{q}}=\alpha_1^{-1}$, we further have $\alpha_1, \alpha_2 \in \mathbb{F}_{\sqrt{q}}^*$.
\end{lemma}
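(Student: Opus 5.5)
The plan is to split according to the two possibilities allowed by the hypothesis $\{\alpha_1^{\sqrt{q}}, \alpha_2^{\sqrt{q}}\} = \{\alpha_1^{-1}, \alpha_2^{-1}\}$: either (A) $\alpha_1^{\sqrt{q}}=\alpha_1^{-1}$ and $\alpha_2^{\sqrt{q}}=\alpha_2^{-1}$, so both lie in $\mu_{\sqrt{q}+1}$; or (B) $\alpha_1^{\sqrt{q}}=\alpha_2^{-1}$ and $\alpha_2^{\sqrt{q}}=\alpha_1^{-1}$. In each case we apply the $\sqrt{q}$-power map to the relation $\alpha_1+\alpha_2=1$. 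Since $1^{\sqrt{q}}=1$, both cases give
$$\alpha_1^{\sqrt{q}}+\alpha_2^{\sqrt{q}} = 1, \qquad\text{hence}\qquad \alpha_1^{-1}+\alpha_2^{-1}=1 .$$
This is the only equation we need.

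Clearing denominators gives $\alpha_1+\alpha_2=\alpha_1\alpha_2$. Combined with $\alpha_1+\alpha_2=1$, this yields $\alpha_1\alpha_2=1$, which is the first claim. Distinctness of $\alpha_1,\alpha_2$ is not needed for this step; it only serves to separate the two cases cleanly.

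For the further claim, assume case (B). Multiplying $\alpha_1\alpha_2=1$ into $\alpha_1^{\sqrt{q}}=\alpha_2^{-1}$ gives $\alpha_1^{\sqrt{q}}=\alpha_1$, so $\alpha_1\in\mathbb{F}_{\sqrt{q}}^*$. In the same way, $\alpha_2^{\sqrt{q}}=\alpha_1^{-1}=\alpha_2$, so $\alpha_2\in\mathbb{F}_{\sqrt{q}}^*$.

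There is no real obstacle. The only point to check is that the $\sqrt{q}$-power map is additive, which holds because it is a power of Frobenius. It is also worth noting that the elements are nonzero, so the reciprocals make sense; this is part of the hypothesis $\alpha_i\in\mathbb{F}_q^*$.
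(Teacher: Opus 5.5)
Your proof is correct and follows the paper's argument: raise $\alpha_1+\alpha_2=1$ to the $\sqrt{q}$-th power, use the hypothesis to get $\alpha_1^{-1}+\alpha_2^{-1}=1$ and hence $\alpha_1\alpha_2=1$, and then in the swapped case deduce $\alpha_1^{\sqrt{q}}=\alpha_1$ (the paper writes this as $\alpha_1^{\sqrt{q}-1}=1$). Your remark that distinctness is not needed for the first claim is also correct.
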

\begin{proof}
Since $\alpha_1 + \alpha_2 = 1$, we have
\[
1 = (\alpha_1 + \alpha_2)^{\sqrt{q}} = \alpha_1^{\sqrt{q}} + \alpha_2^{\sqrt{q}} = \alpha_1^{-1} + \alpha_2^{-1} = \frac{\alpha_1 + \alpha_2}{\alpha_1 \alpha_2} = \frac{1}{\alpha_1 \alpha_2},
\]
and hence $\alpha_1 \alpha_2 = 1$. 

Finally, if $\alpha_1^{\sqrt{q}} = \alpha_2^{-1}$ and $\alpha_2^{\sqrt{q}} = \alpha_1^{-1}$, we have
\[
\alpha_1^{\sqrt{q}-1} = \alpha_1^{\sqrt{q}} \alpha_1^{-1} = \alpha_2^{-1} \alpha_1^{-1} = (\alpha_1 \alpha_2)^{-1} = 1.
\]
Similarly, it follows that $\alpha_2^{\sqrt{q}-1} = 1$, which implies $\alpha_1, \alpha_2 \in \mathbb{F}_{\sqrt{q}}^*$.
\end{proof}
\begin{lemma}\label{thirdone}
Let $\alpha, \beta \in \mu_{\sqrt{q}+1}$. Suppose that $\alpha$ and $\beta$ are roots of $f_c(\lambda)$ for some $c \in \mathbb{F}_q^*$.
\begin{enumerate}[\rm(i)]
\item If $\alpha + \beta \neq 1$ and $\alpha^2 + \alpha\beta + \beta^2 \neq 0$, then the third root $\gamma$ also belongs to $\mu_{\sqrt{q}+1}$. In this case, $\gamma$ and $c$ are explicitly given by
$$\gamma = \frac{\alpha + \beta - \alpha\beta}{\alpha + \beta - 1} \quad \text{and} \quad c = \left( \frac{\alpha^2 + \alpha\beta + \beta^2}{\alpha + \beta - 1} \right)^{\sqrt{q}}.$$
\item If $\alpha + \beta = 1$, then, 
for any $\gamma \in \mu_{\sqrt{q}+1} \setminus \{-1\}$, the elements $\alpha, \beta, \gamma$ are the roots of $f_c(\lambda)$ with $c = (1 + \gamma)^{\sqrt{q}} = 1 + \gamma^{\sqrt{q}}$.
\end{enumerate}
\end{lemma}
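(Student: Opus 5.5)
The plan is to use Vieta's relations for $f_c(\lambda)=\lambda^3-c^{\sqrt{q}}\lambda^2+c^{\sqrt{q}}\lambda-c^{\sqrt{q}-1}$. If $\alpha,\beta,\gamma$ are its roots, then
\[
\alpha+\beta+\gamma=c^{\sqrt{q}}=\alpha\beta+\gamma(\alpha+\beta).
\]
Eliminating $c^{\sqrt{q}}$ gives the linear relation $\gamma(\alpha+\beta-1)=\alpha+\beta-\alpha\beta$ for the third root.

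For (i), $\alpha+\beta\neq 1$, so we can solve: $\gamma=\frac{\alpha+\beta-\alpha\beta}{\alpha+\beta-1}$. Then
\[
c^{\sqrt{q}}=\alpha+\beta+\gamma=\frac{(\alpha+\beta)^2-(\alpha+\beta)+\alpha+\beta-\alpha\beta}{\alpha+\beta-1}=\frac{\alpha^2+\alpha\beta+\beta^2}{\alpha+\beta-1},
\]
which is the stated formula after raising to the $\sqrt{q}$-th power.

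To show $\gamma\in\mu_{\sqrt{q}+1}$, use $\alpha^{\sqrt{q}}=\alpha^{-1}$ and $\beta^{\sqrt{q}}=\beta^{-1}$:
\[
\gamma^{\sqrt{q}}=\frac{\alpha^{-1}+\beta^{-1}-\alpha^{-1}\beta^{-1}}{\alpha^{-1}+\beta^{-1}-1}=\frac{\alpha+\beta-1}{\alpha+\beta-\alpha\beta}=\gamma^{-1}.
\]
This step needs $\alpha+\beta-\alpha\beta\neq 0$, i.e.\ $\gamma\neq 0$. That holds automatically, because $\gamma$ is a root of $f_c$ and $f_c(0)=-c^{\sqrt{q}-1}\neq 0$. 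The hypothesis $\alpha^2+\alpha\beta+\beta^2\neq 0$ is exactly what guarantees $c\neq 0$. This is consistent with the assumption that $c\in\mathbb{F}_q^*$ already exists, so $c$ is nonzero and equals the displayed value.

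For (ii), with $\alpha+\beta=1$, the relation becomes $0=1-\alpha\beta$, so $\alpha\beta=1$. This also follows from Lemma \ref{sum1prod1}, or directly since $\beta=\alpha^{-1}$ would be forced. Given any $\gamma\in\mu_{\sqrt{q}+1}\setminus\{-1\}$, set $c^{\sqrt{q}}=1+\gamma\neq0$. Then
\[
\alpha+\beta+\gamma=1+\gamma \quad\text{and}\quad \alpha\beta+\gamma(\alpha+\beta)=1+\gamma,
\]
so the hypotheses of Lemma \ref{deg2=1} hold with all three elements in $\mu_{\sqrt{q}+1}$. That lemma then gives $\alpha\beta\gamma=c^{\sqrt{q}-1}$, so $\alpha,\beta,\gamma$ are precisely the roots of $f_c$. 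Finally, $c=(1+\gamma)^{\sqrt{q}}=1+\gamma^{\sqrt{q}}$ because $q$-th powers are the identity on $\mathbb{F}_q$.

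The only delicate point is the nonvanishing of the denominators in (i), namely $\gamma\neq0$ and $c\neq0$, which is handled as described above. Everything else is direct computation.
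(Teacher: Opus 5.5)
Your proof is correct, but part (i) runs in the opposite direction from the paper's. You start from the given $c$ and eliminate $c^{\sqrt q}$ between the two Vieta relations. This gives $\gamma(\alpha+\beta-1)=\alpha+\beta-\alpha\beta$, from which you read off $\gamma$ and $c^{\sqrt q}$. Membership $\gamma\in\mu_{\sqrt q+1}$ then follows from the Frobenius computation, with $\gamma\neq 0$ supplied by $f_c(0)\neq 0$.

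The paper instead \emph{defines} $\gamma$ and $c$ by the displayed formulas. It checks that $\gamma^{\sqrt q}=\gamma^{-1}$ and that $\alpha+\beta+\gamma$ and $\alpha\beta+\beta\gamma+\gamma\alpha$ both equal $\frac{\alpha^2+\alpha\beta+\beta^2}{\alpha+\beta-1}$. It then invokes Lemma~\ref{deg2=1} to recover the third relation $\alpha\beta\gamma=c^{\sqrt q-1}$.

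Your route proves exactly what the statement literally asserts: the hypothesized $c$ and third root are forced. It needs no appeal to Lemma~\ref{deg2=1} in (i). It also shows that the hypothesis $\alpha^2+\alpha\beta+\beta^2\neq 0$ is redundant there, since this quantity equals $(\alpha+\beta-1)c^{\sqrt q}$ with $c\neq 0$.

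The paper's route proves the converse: a nonzero $c$ exists for every admissible pair $(\alpha,\beta)$. That is where $\alpha^2+\alpha\beta+\beta^2\neq 0$ is genuinely needed, and it is the direction used for $S_2\subseteq S_1$ in Lemma~\ref{S1S2S3}. On its own, however, the paper's argument never uses the hypothesis that $\alpha,\beta$ are roots of the given $f_c$. Identifying that $c$ with the constructed one requires precisely your elimination identity, which the paper only writes down later in Lemma~\ref{S1S2S3}. The two arguments are therefore complementary.

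For (ii) your argument coincides with the paper's. Deriving $\alpha\beta=1$ from the elimination identity, rather than from Lemma~\ref{sum1prod1}, has one small advantage: it also covers the characteristic-$3$ case $\alpha=\beta=2$, which that lemma formally excludes because it is stated for distinct elements.
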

\begin{proof}
(i) Let $\gamma = \frac{\alpha + \beta - \alpha\beta}{\alpha + \beta - 1}$. Then we have:
$$\gamma^{\sqrt{q}} = \left( \frac{\alpha + \beta - \alpha\beta}{\alpha + \beta - 1} \right)^{\sqrt{q}} = \frac{\alpha^{-1} + \beta^{-1} - \alpha^{-1}\beta^{-1}}{\alpha^{-1} + \beta^{-1} - 1} = \frac{\alpha + \beta - 1}{\alpha + \beta - \alpha\beta} = \gamma^{-1},$$
which implies $\gamma^{\sqrt{q}+1} = 1$. Furthermore, a direct calculation shows that:
$$\alpha + \beta + \gamma = \alpha + \beta + \frac{\alpha + \beta - \alpha\beta}{\alpha + \beta - 1} = 
 \frac{\alpha^2 + \alpha\beta + \beta^2}{\alpha + \beta - 1},$$
and
$$
\alpha\beta + \beta\gamma + \alpha\gamma 
= \alpha\beta + (\alpha + \beta)\gamma = \alpha\beta + (\alpha + \beta) \frac{\alpha + \beta - \alpha\beta}{\alpha + \beta - 1} 
= \frac{\alpha^2 + \alpha\beta + \beta^2}{\alpha + \beta - 1}.
$$
By Lemma \ref{deg2=1}, it follows that $\alpha, \beta, \gamma$ are the roots of the polynomial $\lambda^3 - c^{\sqrt{q}}\lambda^2 + c^{\sqrt{q}}\lambda - c^{\sqrt{q}-1}$ where $c = \left( \frac{\alpha^2 + \alpha\beta + \beta^2}{\alpha + \beta - 1} \right)^{\sqrt{q}}$.

(ii) If we let $\alpha + \beta = 1$, then $\alpha\beta=1$ by Lemma \ref{sum1prod1}. 
Let $\gamma \in \mu_{\sqrt{q}+1} \setminus \{-1\}$ and set $c = (1 + \gamma)^{\sqrt{q}} = 1 + \gamma^{\sqrt{q}}$. Then we have:
$$\alpha + \beta + \gamma = 1 + \gamma = (1 + \gamma)^{q} = c^{\sqrt{q}},$$
and
$$\alpha\beta + \beta\gamma + \alpha\gamma = 1 + \gamma(\alpha + \beta) = 1 + \gamma = c^{\sqrt{q}}.$$
By Lemma \ref{deg2=1}, $\alpha, \beta$, and $\gamma$ are indeed the roots of the polynomial $f_c$ for $c = (1 + \gamma)^{\sqrt{q}}$.
\end{proof}
\begin{remark} \label{sum1a2abb20} If $\alpha + \beta = 1$, then $\alpha\beta = 1$ and 
$\alpha^2 + \alpha\beta + \beta^2=(\alpha + \beta)^2 - \alpha\beta = 1^2 - 1 = 0$.
\end{remark}
\begin{remark} In the context of Lemma \ref{thirdone} (2), the condition $\gamma \neq -1$ is assumed because $c = 0$ if $\gamma = -1$. Suppose $\alpha$ and $\beta$ are $(\sqrt{q}+1)$-th roots of unity satisfying $\alpha + \beta = 1$ and $\alpha\beta = 1$. In this case, $\alpha^2 + \alpha\beta + \beta^2 = (\alpha + \beta)^2 - \alpha\beta = 1 - 1 = 0$. For any $q$ odd, there exist exactly $\sqrt{q}$ distinct $(\sqrt{q}+1)$-th roots of unity $\gamma$ (excluding $-1$) such that $\alpha, \beta$, and $\gamma$ serve as the roots of the polynomial$$f_c(\lambda) = \lambda^3 - c^{\sqrt{q}}\lambda^2 + c^{\sqrt{q}}\lambda - c^{\sqrt{q}-1}.$$Note that $\gamma$ may coincide with $\alpha$ or $\beta$, which corresponds to the case where the polynomial $f_c(\lambda)$ possesses a multiple root.
\end{remark}

\begin{lemma}\label{betgamconj}
Let $\alpha\in\mu_{\sqrt{q}+1}$. Let $\beta$ and $\gamma$ be elements in $\mathbb{F}_q$ satisfying $\gamma = \beta^{-\sqrt{q}}$. If
$$\alpha + \beta + \gamma = c^{\sqrt{q}} = \alpha\beta + \beta\gamma + \gamma\alpha$$
for some $c \in \mathbb{F}_q$, then $\alpha\beta\gamma = c^{\sqrt{q}-1}$. In particular, $\alpha, \beta, \gamma$ are the roots of the polynomial
$$f_c(\lambda) = \lambda^3 - c^{\sqrt{q}}\lambda^2 + c^{\sqrt{q}}\lambda - c^{\sqrt{q}-1}.$$
\end{lemma}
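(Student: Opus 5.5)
I will follow the proof of Lemma~\ref{deg2=1}: apply the $\sqrt{q}$-th power Frobenius to the sum relation and compare with the ratio of the symmetric functions. Set $d=\alpha\beta\gamma$. Here $\alpha\in\mu_{\sqrt{q}+1}$ gives $\alpha^{\sqrt{q}}=\alpha^{-1}$. From $\gamma=\beta^{-\sqrt{q}}$ we get $\beta^{\sqrt{q}}=\gamma^{-1}$. Applying $x\mapsto x^{\sqrt{q}}$ once more gives $\gamma^{\sqrt{q}}=\beta^{-q}=\beta^{-1}$, since $\beta\in\mathbb{F}_q$. We need $\beta\neq 0$ for $\gamma$ to be defined, so all three elements are nonzero and $d\neq 0$.

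First I raise $\alpha+\beta+\gamma=c^{\sqrt{q}}$ to the $\sqrt{q}$-th power. This gives
\[
\alpha^{-1}+\gamma^{-1}+\beta^{-1}=(c^{\sqrt{q}})^{\sqrt{q}}=c^{q}=c ,
\]
using $c\in\mathbb{F}_q$. Next I use the second relation:
\[
\frac{\alpha\beta+\beta\gamma+\gamma\alpha}{\alpha\beta\gamma}=\frac{1}{\alpha}+\frac{1}{\beta}+\frac{1}{\gamma},
\]
so $c^{\sqrt{q}}/d=c$. If $c\neq 0$, this gives $d=c^{\sqrt{q}-1}$.

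The only subtle point is the case $c=0$, which the statement does not exclude explicitly. I would handle it in one of two ways. The first option is to note that in context $c\in\mathbb{F}_q^*$, just as in Lemma~\ref{deg2=1}. The second option is to show that $c=0$ cannot occur. If $c=0$, then $\alpha+\beta+\gamma=0=\alpha\beta+\beta\gamma+\gamma\alpha$, so $\alpha,\beta,\gamma$ are the roots of $\lambda^3-d$. Their reciprocals then sum to $(\alpha\beta+\beta\gamma+\gamma\alpha)/d=0$, which is consistent, so this case does not lead to a contradiction. In that degenerate situation the conclusion $d=c^{\sqrt{q}-1}$ would read $d=0^{\sqrt{q}-1}=0$, which is false. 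So the statement must be read with $c\in\mathbb{F}_q^*$, and I would state this assumption explicitly, as the surrounding lemmas do.

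Finally, with $e_1=e_2=c^{\sqrt{q}}$ and $e_3=c^{\sqrt{q}-1}$, Vieta's formulas show that $(\lambda-\alpha)(\lambda-\beta)(\lambda-\gamma)=\lambda^3-c^{\sqrt{q}}\lambda^2+c^{\sqrt{q}}\lambda-c^{\sqrt{q}-1}=f_c(\lambda)$. This completes the proof.
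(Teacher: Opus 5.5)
Your proof is correct and is essentially the paper's argument: the paper records $\alpha^{\sqrt{q}}=\alpha^{-1}$, $\beta^{\sqrt{q}}=\gamma^{-1}$, $\gamma^{\sqrt{q}}=\beta^{-1}$ and then repeats the comparison from Lemma~\ref{deg2=1}, namely $(e_1)^{\sqrt{q}}=c$ against $e_2/e_3=c^{\sqrt{q}}/d$. Your caveat about $c=0$ is also right, since that step divides by $c$ and the statement genuinely needs $c\in\mathbb{F}_q^*$; for example, in characteristic $3$, taking $\alpha=\beta=\gamma\in\mu_{\sqrt{q}+1}$ gives $c=0$ while $\alpha\beta\gamma\neq 0$.
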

\begin{proof}
Let $\alpha\beta\gamma = d$. Since $\alpha \in \mu_{\sqrt{q}+1}$, we have $\alpha^{-1} = \alpha^{\sqrt{q}}$. From the condition $\gamma = \beta^{-\sqrt{q}}$, it follows that $\gamma^{-1} = \beta^{\sqrt{q}}$ and $\beta^{-1} = \gamma^{\sqrt{q}}$. 
The remainder of the proof proceeds by an argument similar to that of Lemma \ref{deg2=1}.
\end{proof}
\begin{lemma}\label{betgamconjfc}
Let $\beta$ and $\gamma$ be elements in $\mathbb{F}_q$ such that $\beta^{\sqrt{q}} = \gamma^{-1}$ (or equivalently, $\beta \gamma^{\sqrt{q}} = 1$).
\begin{enumerate}[\rm (i)]
\item If $\beta + \gamma \neq 1$ and $\beta^2 + \beta\gamma + \gamma^2 \neq 0$, let
$$\alpha = \frac{\beta + \gamma - \beta\gamma}{\beta + \gamma - 1}.$$
Then $\alpha, \beta, \gamma$ are the roots of the polynomial
$$\lambda^3 - c^{\sqrt{q}}\lambda^2 + c^{\sqrt{q}}\lambda - c^{\sqrt{q}-1} \quad \text{for} \quad c = \left( \frac{\beta^2 + \beta\gamma + \gamma^2}{\beta + \gamma - 1} \right)^{\sqrt{q}}.$$
Moreover, $\alpha$ is a $(\sqrt{q}+1)$-th root of unity.
\item If $\beta + \gamma = 1$, then 
the roots of the polynomial
$$\lambda^3 - c^{\sqrt{q}}\lambda^2 + c^{\sqrt{q}}\lambda - c^{\sqrt{q}-1}$$
are $\{c^{\sqrt{q}-1}, \beta, \gamma\}$, where $c$ satisfies the equation $c^{\sqrt{q}} - c^{\sqrt{q}-1} - 1 = 0$. Indeed, this equation has exactly $\sqrt{q}$ distinct roots for such $c$.
\end{enumerate}
\end{lemma}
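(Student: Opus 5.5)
Both parts will be reduced to Lemma~\ref{betgamconj}, which says that if $\alpha\in\mu_{\sqrt{q}+1}$, $\gamma=\beta^{-\sqrt{q}}$, and $\alpha+\beta+\gamma=c^{\sqrt{q}}=\alpha\beta+\beta\gamma+\gamma\alpha$, then $\alpha,\beta,\gamma$ are the roots of $f_c$. So in each case I only need to produce an $\alpha$ in $\mu_{\sqrt{q}+1}$ and check the two symmetric-function equalities. First I record the basic conjugation rules coming from $\beta^{\sqrt{q}}=\gamma^{-1}$: raising to the $q$-th power gives $\beta=\beta^{q}=\gamma^{-\sqrt{q}}$, so $\gamma^{\sqrt{q}}=\beta^{-1}$. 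Consequently $(\beta+\gamma)^{\sqrt{q}}=\beta^{-1}+\gamma^{-1}=(\beta+\gamma)/(\beta\gamma)$ and $(\beta\gamma)^{\sqrt{q}}=(\beta\gamma)^{-1}$, so the pair $\beta,\gamma$ behaves under $x\mapsto x^{\sqrt{q}}$ exactly as two elements of $\mu_{\sqrt{q}+1}$ do.

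For (i) I will copy the proof of Lemma~\ref{thirdone}(i) with the roles of the letters changed. Set $s=\beta+\gamma$ and $t=\beta\gamma$, so $s^{\sqrt{q}}=s/t$ and $t^{\sqrt{q}}=1/t$. Then $\alpha=(s-t)/(s-1)$ is defined because $s\neq1$, and
$$\alpha^{\sqrt{q}}=\frac{s/t-1/t}{s/t-1}=\frac{s-1}{s-t}=\alpha^{-1}.$$
Here $s\neq t$, since otherwise $\alpha=0$, and $\alpha\ne0$ follows from $\beta^2+\beta\gamma+\gamma^2\neq0$ (checked below). Hence $\alpha\in\mu_{\sqrt{q}+1}$. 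Next I will compute
$$\alpha+s=\frac{s^2-t}{s-1},\qquad t+\alpha s=\frac{t(s-1)+s(s-t)}{s-1}=\frac{s^2-t}{s-1},$$
and note that $s^2-t=\beta^2+\beta\gamma+\gamma^2$. These agree, so with $c^{\sqrt{q}}=(s^2-t)/(s-1)$, that is $c$ as stated (nonzero by hypothesis), Lemma~\ref{betgamconj} applies. To see $\alpha\ne0$: if $s=t$, then $\alpha+s=(s^2-s)/(s-1)=s$ forces $c^{\sqrt{q}}=s$, while the expression $s^2-t=s^2-s$ is nonzero. To keep things safe I will instead argue directly that $s=t$ would make $\alpha=0$ a root of $f_c$, which is impossible since $c\ne0$. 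In any case the main identity check is routine.

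For (ii), with $s=1$ we get $1=s^{\sqrt{q}}=1/t$, so $\beta\gamma=1$; this is Lemma~\ref{sum1prod1}'s argument. Then for any $\alpha$, $\alpha+\beta+\gamma=1+\alpha=\alpha s+t$, so the two symmetric functions agree automatically. I therefore take $\alpha\in\mu_{\sqrt{q}+1}\setminus\{-1\}$ and $c^{\sqrt{q}}=1+\alpha$. Lemma~\ref{betgamconj} gives $\alpha=\alpha\beta\gamma=c^{\sqrt{q}-1}$, so the roots are $\{c^{\sqrt{q}-1},\beta,\gamma\}$ and $c$ satisfies $c^{\sqrt{q}}-c^{\sqrt{q}-1}-1=0$. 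Conversely, any root $c\neq0$ of this equation gives $\alpha:=c^{\sqrt{q}-1}\in\mu_{\sqrt{q}+1}$ with $1+\alpha=c^{\sqrt{q}}\neq0$. The map $\alpha\mapsto c=(1+\alpha)^{\sqrt{q}}$ is injective, so the solutions $c$ correspond bijectively to $\mu_{\sqrt{q}+1}\setminus\{-1\}$, which has $\sqrt{q}$ elements.

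The main obstacle is this count in (ii), in particular confirming that $-1\in\mu_{\sqrt{q}+1}$ in every characteristic; this holds because $(-1)^{\sqrt{q}+1}=1$ (in characteristic $2$ this is trivial, and otherwise $\sqrt{q}+1$ is even), so exactly one element is removed and exactly $\sqrt{q}$ values of $c$ remain. I will also check that these $\sqrt{q}$ values are genuinely distinct roots of the degree-$\sqrt{q}$ polynomial $c^{\sqrt{q}}-c^{\sqrt{q}-1}-1$, which follows from the injectivity above.
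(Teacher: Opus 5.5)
Your part (i) is the paper's proof. It makes the same verification that $\alpha^{\sqrt{q}}=\alpha^{-1}$ and that $\alpha+\beta+\gamma$ and $\alpha\beta+\beta\gamma+\gamma\alpha$ both equal $(\beta^2+\beta\gamma+\gamma^2)/(\beta+\gamma-1)$. You cite Lemma~\ref{betgamconj} for $\alpha\beta\gamma=c^{\sqrt{q}-1}$, where the paper redoes the computation $c^{\sqrt{q}}/(\alpha\beta\gamma)=c$ inline.

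Your justification of $\alpha\neq0$, however, does not work as written. The hypothesis $\beta^2+\beta\gamma+\gamma^2\neq0$ cannot rule out $s=t$: as you notice, $s=t$ gives $s^2-t=s(s-1)$, which that hypothesis allows to be nonzero, so no contradiction arises. The fallback ``$s=t$ would make $0$ a root of $f_c$'' is circular. Your only route to ``$\alpha$ is a root of $f_c$'' is Lemma~\ref{betgamconj}, which assumes $\alpha\in\mu_{\sqrt{q}+1}$ and obtains the product identity by dividing by $\alpha$.

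The correct reason is one line and is already implicit in your formula for $\alpha^{\sqrt{q}}$. Since $(s-1)^{\sqrt{q}}=s^{\sqrt{q}}-1=(s-t)/t$ with $t\neq0$, the hypothesis $s\neq1$ forces $s\neq t$, i.e.\ $\alpha\neq0$. So $\alpha\neq0$ comes from $\beta+\gamma\neq1$, while $\beta^2+\beta\gamma+\gamma^2\neq0$ is needed only to get $c\neq0$. (The paper passes over this point silently; it is hidden in the nonvanishing denominator $\gamma^{-1}+\beta^{-1}-1=(\beta+\gamma-1)^{\sqrt{q}}$ in its chain for $\alpha^{\sqrt{q}}$.)

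For (ii) your route differs from the paper's. The paper takes an arbitrary $c$ with $c^{\sqrt{q}}-c^{\sqrt{q}-1}-1=0$. It checks directly that $c^{\sqrt{q}-1},\beta,\gamma$ have sum and second symmetric function equal to $c^{\sqrt{q}}$, with product $c^{\sqrt{q}-1}\beta\gamma=c^{\sqrt{q}-1}$. It then counts the $c$'s by substituting $X=1/c$, which yields $X^{\sqrt{q}}+X-1=0$, and applying Proposition~\ref{degsqrtq}(i).

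You instead parametrize by $\alpha\in\mu_{\sqrt{q}+1}\setminus\{-1\}$ and put $c=(1+\alpha)^{\sqrt{q}}$. You obtain the roots from Lemma~\ref{betgamconj} and count through injectivity of $\alpha\mapsto(1+\alpha)^{\sqrt{q}}$ together with the degree bound. This is correct and avoids Proposition~\ref{degsqrtq}. It parallels Lemma~\ref{thirdone}(ii) and makes explicit that the third root $c^{\sqrt{q}-1}$ runs exactly over $\mu_{\sqrt{q}+1}\setminus\{-1\}$, which is what Lemma~\ref{conj3roots} later uses. The paper's direct check is shorter for the assertion about every such $c$.

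Your converse step (a root $c\neq0$ yields $c^{\sqrt{q}-1}\in\mu_{\sqrt{q}+1}$) tacitly assumes $c\in\mathbb{F}_q$. It is unnecessary anyway: $\sqrt{q}$ distinct roots of a polynomial of degree $\sqrt{q}$ are all of its roots.
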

\begin{proof}
(i)  Assume $\beta + \gamma \neq 1$ and $\beta^2 + \beta\gamma + \gamma^2 \neq 0$. Let $\alpha$ be defined as:
$$\alpha = \frac{\beta + \gamma - \beta\gamma}{\beta + \gamma - 1}.$$
First, we verify that $\alpha$ is a $(\sqrt{q}+1)$-th root of unity. Taking the $\sqrt{q}$-th power of $\alpha$ and substituting the relations $\beta^{\sqrt{q}} = \gamma^{-1}$ and $\gamma^{\sqrt{q}} = \beta^{-1}$, we obtain:
$$\alpha^{\sqrt{q}} = \frac{\beta^{\sqrt{q}} + \gamma^{\sqrt{q}} - \beta^{\sqrt{q}}\gamma^{\sqrt{q}}}{\beta^{\sqrt{q}} + \gamma^{\sqrt{q}} - 1} = \frac{\gamma^{-1} + \beta^{-1} - \gamma^{-1}\beta^{-1}}{\gamma^{-1} + \beta^{-1} - 1} = \frac{\beta + \gamma - 1}{\beta + \gamma - \beta\gamma} = \alpha^{-1}.$$
Thus, $\alpha^{\sqrt{q}+1} = 1$, confirming $\alpha \in \mu_{\sqrt{q}+1}$. Next, we evaluate the elementary symmetric polynomials. For $c = \left( \frac{\beta^2 + \beta\gamma + \gamma^2}{\beta + \gamma - 1} \right)^{\sqrt{q}}$, we have:
$$\alpha + \beta + \gamma = \frac{\beta + \gamma - \beta\gamma}{\beta + \gamma - 1} + \beta + \gamma = \frac{\beta^2 + \beta\gamma + \gamma^2}{\beta + \gamma - 1} = c^{\sqrt{q}},$$
and
$$\alpha\beta + \beta\gamma + \gamma\alpha = \alpha(\beta + \gamma) + \beta\gamma = \frac{\beta + \gamma - \beta\gamma}{\beta + \gamma - 1}(\beta + \gamma) + \beta\gamma = \frac{\beta^2 + \beta\gamma + \gamma^2}{\beta + \gamma - 1} = c^{\sqrt{q}}.$$
Finally, let $d = \alpha\beta\gamma$. We observe that:$$\frac{c^{\sqrt{q}}}{d} = \frac{\alpha\beta + \beta\gamma + \gamma\alpha}{\alpha\beta\gamma} = \frac{1}{\gamma} + \frac{1}{\alpha} + \frac{1}{\beta} = \alpha^{\sqrt{q}} + \gamma^{\sqrt{q}} + \beta^{\sqrt{q}} = (\alpha + \gamma + \beta)^{\sqrt{q}} = (c^{\sqrt{q}})^{\sqrt{q}} = c.$$It follows that $\alpha\beta\gamma = d = c^{\sqrt{q}-1}$. By the relations of roots and coefficients, $\alpha, \beta, \gamma$ are indeed the roots of $\lambda^3 - c^{\sqrt{q}}\lambda^2 + c^{\sqrt{q}}\lambda - c^{\sqrt{q}-1}$.

(ii) If $\beta + \gamma = 1$, then $\beta\gamma =1$ and $\beta,\gamma \in \mathbb{F}_{\sqrt{q}}^*$ by Lemma \ref{sum1prod1}.
Let $c$ be an element in $\mathbb{F}_q$ satisfying the equation $c^{\sqrt{q}} - c^{\sqrt{q}-1} - 1 = 0$. 
Since $c^{\sqrt{q}-1} + (\beta + \gamma) = c^{\sqrt{q}-1} + 1 = c^{\sqrt{q}}$ and
$c^{\sqrt{q}-1}(\beta + \gamma) + \beta\gamma = c^{\sqrt{q}-1} + 1 = c^{\sqrt{q}}$,
three elements $c^{\sqrt{q}-1}, \beta, \gamma$ are the roots of the polynomial $\lambda^3 - c^{\sqrt{q}}\lambda^2 + c^{\sqrt{q}}\lambda - c^{\sqrt{q}-1}$. Finally, we note that the equation $c^{\sqrt{q}} - c^{\sqrt{q}-1} - 1 = 0$ can be rewritten as $-c^{\sqrt{q}}(-1 + \frac{1}{c} + \frac{1}{c^{\sqrt{q}}}) = 0$. Since $(-1)^{1-\sqrt{q}} = 1$ for any $q$, according to Proposition \ref{degsqrtq}, this equation possesses exactly $\sqrt{q}$ distinct roots.
\end{proof}
\begin{lemma}\label{3rdroot}
Let $\mu_3$ be the group of the third roots of unity,
which is a subgroup of $\mathbb{F}_q^{\ast}$.
\begin{enumerate}[\rm(i)]
\item If $\sqrt{q} \equiv 0 \pmod 3$, then $\mu_3 = \{1\}$.
\item If $\sqrt{q} \not\equiv 0 \pmod 3$, then
$|\mu_3| =3$, and
\begin{enumerate}[\rm(a)]
\item $\mu_3 \subset \mu_{\sqrt{q}+1}$ if and only if
$\sqrt{q} \equiv 2 \pmod 3$;
\item $\mu_3 \subset \mathbb{F}_{\sqrt{q}}^{\ast}$
if and only if
$\sqrt{q} \equiv 1 \pmod 3$.
\end{enumerate}
\end{enumerate}
\end{lemma}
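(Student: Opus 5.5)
The plan is to reduce everything to the structure of the cyclic group $\mathbb{F}_q^*$, which has order $q-1=(\sqrt{q}-1)(\sqrt{q}+1)$. The group $\mu_3$ of cube roots of unity in $\mathbb{F}_q^*$ is the unique subgroup of order $\gcd(3,q-1)$. Likewise, $\mu_{\sqrt{q}+1}$ and $\mathbb{F}_{\sqrt{q}}^*$ are the unique subgroups of orders $\sqrt{q}+1$ and $\sqrt{q}-1$. So every inclusion in the statement becomes a divisibility condition on integers.

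For (i), assume $\sqrt{q}\equiv 0 \pmod 3$. Then $3 \mid q$, so $p=3$. The map $u\mapsto u^3$ is injective (Frobenius), so $u^3=1$ forces $u=1$. Equivalently, $\gcd(3,q-1)=1$ because $q\equiv 0 \pmod 3$. Either way $\mu_3=\{1\}$.

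For (ii), assume $\sqrt{q}\not\equiv 0 \pmod 3$. Then $q=\sqrt{q}^2\equiv 1 \pmod 3$, so $3\mid q-1$. Since $\mathbb{F}_q^*$ is cyclic, it has a unique subgroup of order $3$, and this is $\mu_3$; hence $|\mu_3|=3$.

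For (a), $\mu_3\subset\mu_{\sqrt{q}+1}$ holds if and only if $3 \mid \sqrt{q}+1$. This uses the fact that in a cyclic group, a subgroup of order $m$ lies in the subgroup of order $n$ exactly when $m\mid n$. The condition $3 \mid \sqrt{q}+1$ is the same as $\sqrt{q}\equiv 2 \pmod 3$.

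For (b), the same argument applies with $\mathbb{F}_{\sqrt{q}}^*$, the subgroup of order $\sqrt{q}-1$. Here $\mu_3\subset\mathbb{F}_{\sqrt{q}}^*$ if and only if $3\mid \sqrt{q}-1$, that is, $\sqrt{q}\equiv 1 \pmod 3$.

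A quick cross-check: since $\sqrt{q}\not\equiv 0$, exactly one of $\sqrt{q}\pm1$ is divisible by $3$. So exactly one of (a) and (b) holds, consistent with $\mu_{\sqrt{q}+1}\cap\mathbb{F}_{\sqrt{q}}^*=\{\pm1\}$.

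There is no real obstacle here. The only point needing care is to state explicitly the uniqueness of subgroups of each order in a cyclic group, which justifies the divisibility criterion for inclusion.
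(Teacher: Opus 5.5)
Your proof is correct and follows essentially the same route as the paper, which likewise writes $\mathbb{F}_q^*=\langle\omega\rangle$, $\mu_{\sqrt{q}+1}=\langle\omega^{\sqrt{q}-1}\rangle$, $\mathbb{F}_{\sqrt{q}}^*=\langle\omega^{\sqrt{q}+1}\rangle$ and $\mu_3=\langle\omega^{(q-1)/3}\rangle$, so that the inclusions become divisibility conditions in a cyclic group. Your version simply states explicitly the subgroup-uniqueness criterion that the paper leaves implicit.
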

\begin{proof}
The assertions (i) and the first part of (ii) are trivial.
The remaining parts come from
the representations of groups:
$\mathbb{F}_q^{\ast}= \langle \omega \rangle$,
$\mu_{\sqrt{q}+1} =\langle \omega^{\sqrt{q}-1}\rangle$,
$\mathbb{F}_{\sqrt{q}}^{\ast} = \langle \omega^{\sqrt{q}+1}\rangle$
and $\mu_3 =\langle \omega^{\frac{q-1}{3}}\rangle$.
\end{proof}

\begin{lemma}\label{mua+b=1}
Let $\alpha, \beta \in \mu_{\sqrt{q}+1}$ and $\alpha + \beta = 1$. 
Then 
the following hold:
\begin{enumerate}[\rm(i)]
\item The case $q \equiv 0 \pmod 2$ (Characteristic $p=2$): If $\sqrt{q} \equiv 2 \pmod 3$, then the solutions are:
$$\begin{cases} \alpha = \omega^{\frac{q-1}{3}} \\ \beta = \alpha^{-1} = \omega^{\frac{2(q-1)}{3}} \end{cases} \quad \text{or} \quad \begin{cases} \alpha = \omega^{\frac{2(q-1)}{3}} \\ \beta = \alpha^{-1} = \omega^{\frac{q-1}{3}} \end{cases}$$
If $\sqrt{q} \equiv 1 \pmod 3$, then there is no solution.
\item The case $q \equiv 0 \pmod 3$ (Characteristic $p=3$): The only solution is $\alpha = \beta = 2$.
\item The other cases ($p \neq 2, 3$): If $\sqrt{q} \equiv 2 \pmod 3$, then the solutions are:
$$\begin{cases} \alpha = \omega^{\frac{q-1}{6}} \\ \beta = \alpha^{-1} = \omega^{\frac{5(q-1)}{6}} \end{cases} \quad \text{or} \quad \begin{cases} \alpha = \omega^{\frac{5(q-1)}{6}} \\ \beta = \alpha^{-1} = \omega^{\frac{q-1}{6}} \end{cases}$$
If $\sqrt{q} \equiv 1 \pmod 3$, then there is no solution.
\end{enumerate}
\end{lemma}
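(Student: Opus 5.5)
The plan is to reduce everything to Lemma~\ref{prodsum1} and Lemma~\ref{3rdroot}. Suppose $\alpha,\beta\in\mu_{\sqrt{q}+1}$ with $\alpha+\beta=1$. By Lemma~\ref{sum1prod1}, applied with $\alpha^{\sqrt{q}}=\alpha^{-1}$ and $\beta^{\sqrt{q}}=\beta^{-1}$, we get $\alpha\beta=1$. (When $\alpha=\beta$ the lemma's distinctness hypothesis is not needed, since the displayed computation works verbatim.) So $\alpha,\beta$ are the roots of $X^2-X+1$, and Lemma~\ref{prodsum1} lists the only candidates in $\mathbb{F}_q$. It then remains, case by case, to decide when these candidates actually lie in $\mu_{\sqrt{q}+1}$.

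Observe that a root $\alpha$ of $X^2-X+1$ satisfies $\alpha^3=-1$, since $X^3+1=(X+1)(X^2-X+1)$. Hence $\alpha^6=1$, and $-\alpha$ (equivalently $\alpha^2=\alpha-1$, up to sign) is a primitive cube root of unity when $p\neq 3$.

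\textbf{Characteristic $2$.} Here $-1=1$, so $\alpha^3=1$ with $\alpha\ne1$, i.e.\ $\alpha$ generates $\mu_3=\{1,\omega^{(q-1)/3},\omega^{2(q-1)/3}\}$. By Lemma~\ref{3rdroot}(ii)(a), $\alpha\in\mu_{\sqrt q+1}$ if and only if $\sqrt q\equiv 2\pmod 3$. When $\sqrt q\equiv1\pmod3$ we instead have $\mu_3\subset\mathbb{F}_{\sqrt q}^*$, and $\mu_3\cap\mu_{\sqrt q+1}\subseteq\mathbb{F}_{\sqrt q}^*\cap\mu_{\sqrt q+1}=\{\pm1\}$, which forces $\alpha=1$, a contradiction. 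This gives (i). Note that $\sqrt q\not\equiv 0\pmod 3$ automatically holds in this case.

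\textbf{Characteristic $3$.} Here $X^2-X+1=(X+1)^2=(X-2)^2$, so $\alpha=\beta=2=-1$. Moreover $(-1)^{\sqrt q+1}=1$ since $\sqrt q+1$ is even, so $2\in\mu_{\sqrt q+1}$. This gives (ii).

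\textbf{Characteristic $p\ne2,3$.} Then $\alpha$ has order exactly $6$, so $\alpha\in\{\omega^{(q-1)/6},\omega^{5(q-1)/6}\}$, as in Lemma~\ref{prodsum1}. We need $6\mid\sqrt q+1$ for $\alpha\in\mu_{\sqrt q+1}=\langle\omega^{\sqrt q-1}\rangle$, which is the subgroup of order $\sqrt q+1$. Since $\sqrt q$ is odd, $\sqrt q+1$ is even, so the condition is $3\mid\sqrt q+1$, i.e.\ $\sqrt q\equiv2\pmod3$. Conversely, if $\sqrt q\equiv1\pmod3$, then the element of order $6$ would have to lie in a cyclic group whose order $\sqrt q+1\equiv2\pmod 3$ is not divisible by $3$, which is impossible. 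This gives (iii).

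The only delicate point is this group-order bookkeeping: translating the membership $\alpha\in\mu_{\sqrt q+1}$ into a divisibility condition on $\sqrt q+1$. It is routine via the cyclic generators listed in the proof of Lemma~\ref{3rdroot}. In each case the ordering of $\alpha,\beta$ gives the two listed solutions, with $\beta=\alpha^{-1}$.
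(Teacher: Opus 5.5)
Your proposal is correct and follows essentially the same route as the paper: use Lemma~\ref{sum1prod1} to get $\alpha\beta=1$, read off the candidates from Lemma~\ref{prodsum1}, and then decide membership in $\mu_{\sqrt{q}+1}$ via Lemma~\ref{3rdroot} in characteristic $2$ and via the divisibility condition $6\mid\sqrt{q}+1$ for $p\neq 2,3$. Two details you add are small but welcome refinements of the paper's terse argument: you note that the distinctness hypothesis of Lemma~\ref{sum1prod1} can be dropped, which is needed for $\alpha=\beta=2$ when $p=3$, and you verify explicitly that $\alpha$ has order exactly $6$ when $p\neq 2,3$.
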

\begin{proof} 
Since $\alpha+\beta=1$, we have $\alpha\beta=1$ by Lemma \ref{sum1prod1}. 

\noindent (i) By Lemma \ref{prodsum1}, $\{\alpha,\beta\}=
\{\omega^{\frac{q-1}{3}},\omega^{\frac{2(q-1)}{3}}\}\subseteq \mu_3$. By Lemma \ref{3rdroot}, (i) follows.

\noindent (ii) follows from Lemma \ref{prodsum1}.

\noindent (iii) By Lemma \ref{prodsum1}, $\{\alpha,\beta\}=
\{\omega^{\frac{q-1}{6}},\omega^{-\frac{q-1}{6}}\}$. 
Notice that $\omega^{\pm \frac{q-1}{6}} \in \mu_{\sqrt{q}+1}$ if and only if 
$\left(\omega^{\pm \frac{q-1}{6}}\right)^{\sqrt{q}+1} = 1$, 
which is equivalent to $\sqrt{q}+1 \equiv 0 \pmod{6}$. 
Since $q$ is odd, this is further equivalent to $\sqrt{q}+1 \equiv 0 \pmod{3}$. Now (iii) follows from Lemma \ref{3rdroot}.
\end{proof}
\begin{lemma}\label{conja+b=1}
Let $\alpha$ and $\beta$ be elements in $\mathbb{F}_q$ such that $\alpha^{\sqrt{q}} = \beta^{-1}$ and $\alpha + \beta = 1$. Then the following hold:
\begin{enumerate}[\rm (i)]
\item Case $q \equiv 0 \pmod 2$: If $\sqrt{q} \equiv 1 \pmod 3$, the solutions are:
$$\begin{cases} \alpha = \omega^{\frac{q-1}{3}} \\ \beta = \alpha^{-1} = \omega^{\frac{2(q-1)}{3}} \end{cases} \quad \text{or} \quad \begin{cases} \alpha = \omega^{\frac{2(q-1)}{3}} \\ \beta = \alpha^{-1} = \omega^{\frac{q-1}{3}} \end{cases}$$
If $\sqrt{q} \equiv 2 \pmod 3$, there is no solution.

\item Case $q \equiv 0 \pmod 3$: The unique solution is $\alpha = \beta = 2$, which belongs to the group of $(\sqrt{q}+1)$-th roots of unity, $\mu_{\sqrt{q}+1}$.

\item Case $p \neq 2, 3$: If $\sqrt{q} \equiv 1 \pmod 3$, the solutions are:
$$\begin{cases} \alpha = \omega^{\frac{q-1}{6}} \\ \beta = \alpha^{-1} = \omega^{\frac{5(q-1)}{6}} \end{cases} \quad \text{or} \quad \begin{cases} \alpha = \omega^{\frac{5(q-1)}{6}} \\ \beta = \alpha^{-1} = \omega^{\frac{q-1}{6}} \end{cases}$$
If $\sqrt{q} \equiv 2 \pmod 3$, there is no solution.
\end{enumerate}
\end{lemma}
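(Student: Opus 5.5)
The plan mirrors the proof of Lemma~\ref{mua+b=1}, with the roles of $\mu_{\sqrt{q}+1}$ and $\mathbb{F}_{\sqrt{q}}^*$ interchanged.

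\emph{Reduction to Lemma~\ref{prodsum1}.} The hypothesis $\alpha^{\sqrt{q}}=\beta^{-1}$ gives $\beta^{\sqrt{q}}=\alpha^{-q}=\alpha^{-1}$. Hence $\{\alpha^{\sqrt{q}},\beta^{\sqrt{q}}\}=\{\alpha^{-1},\beta^{-1}\}$, with the ``crossed'' matching. Both elements are nonzero, because $\alpha^{\sqrt{q}}\beta=1$.
\begin{itemize}
\item If $\alpha\neq\beta$, Lemma~\ref{sum1prod1} applies directly: $\alpha\beta=1$ and $\alpha,\beta\in\mathbb{F}_{\sqrt{q}}^*$.
\item If $\alpha=\beta$, then $2\alpha=1$, and the same computation $1=(\alpha+\beta)^{\sqrt{q}}=\alpha^{-1}+\beta^{-1}=1/(\alpha\beta)$ still gives $\alpha\beta=1$. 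It forces $\alpha^2=1$ with $2\alpha=1$, which is possible only when $p=3$ and $\alpha=2$.
\end{itemize}
So in every case $\alpha\beta=1$ and $\alpha+\beta=1$, and Lemma~\ref{prodsum1} identifies $\{\alpha,\beta\}$ in each characteristic.

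\emph{Case analysis.} The remaining step is to decide when the candidates from Lemma~\ref{prodsum1} actually satisfy $\alpha^{\sqrt{q}}=\beta^{-1}=\alpha$, i.e.\ when $\alpha\in\mathbb{F}_{\sqrt{q}}^*$. (When $\alpha\ne\beta$ this membership is necessary by Lemma~\ref{sum1prod1}. Conversely, if $\alpha\in\mathbb{F}_{\sqrt{q}}^*$ and $\alpha\beta=1$, then $\alpha^{\sqrt{q}}=\alpha=\beta^{-1}$, so the condition is also sufficient.)
\begin{itemize}
\item For $p=2$ the candidates are the primitive cube roots of unity. By Lemma~\ref{3rdroot}(ii)(b), they lie in $\mathbb{F}_{\sqrt{q}}^*$ exactly when $\sqrt{q}\equiv1\pmod 3$. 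This gives (i); the solution list includes both orderings.
\item For $p\neq2,3$ the candidates are $\omega^{\pm(q-1)/6}$, and these lie in $\mathbb{F}_{\sqrt{q}}^*$ iff $6\mid\sqrt{q}-1$. Since $\sqrt{q}$ is odd, this is equivalent to $3\mid\sqrt{q}-1$, which gives (iii). Note that $\sqrt{q}\equiv0\pmod3$ is impossible for $p\neq3$, so the dichotomy between residues $1$ and $2$ is exhaustive.
\item For $p=3$, Lemma~\ref{prodsum1} gives $\alpha=\beta=2=-1$. Indeed $(-1)^{\sqrt{q}}=-1=(-1)^{-1}$, so the hypothesis holds. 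Moreover $(-1)^{\sqrt{q}+1}=1$ because $\sqrt{q}+1$ is even, so $2\in\mu_{\sqrt{q}+1}$, which is (ii).
\end{itemize}

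The only delicate point is the $\alpha=\beta$ case, since Lemma~\ref{sum1prod1} assumes distinct elements. The direct computation in the reduction step handles it, and shows that it arises only when $p=3$.
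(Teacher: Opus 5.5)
Your proof is correct and follows essentially the same route as the paper: reduce via Lemma~\ref{sum1prod1} to $\alpha\beta=1$ and $\alpha,\beta\in\mathbb{F}_{\sqrt{q}}^*$, identify the candidates with Lemma~\ref{prodsum1}, and then use Lemma~\ref{3rdroot} to decide when they lie in $\mathbb{F}_{\sqrt{q}}^*$. Your separate treatment of the case $\alpha=\beta$, which Lemma~\ref{sum1prod1} formally excludes by its distinctness hypothesis and which occurs exactly when $p=3$, is a small but worthwhile tightening of the paper's terse argument.
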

\begin{proof} 
By Lemma \ref{sum1prod1}, $\alpha\beta=1$ and $\alpha,\beta \in\mathbb{F}_{\sqrt{q}}^*$. Now, using Lemma \ref{prodsum1} and Lemma \ref{3rdroot}, we can proceed with the proof in a similar manner to that of Lemma \ref{mua+b=1}.
\end{proof}
\begin{lemma}\label{ma2+ab+b2=0}
Consider the equation $\alpha^2 + \alpha\beta + \beta^2 = 0$ for $\alpha, \beta \in \mu_{\sqrt{q}+1}$.
\begin{enumerate}[\rm(i)]
\item If $\sqrt{q} \equiv 0 \pmod 3$, the equation is satisfied if and only if $\alpha = \beta$.
\item If $\sqrt{q} \equiv 1 \pmod 3$, there is no such $\alpha, \beta \in \mu_{\sqrt{q}+1}$ satisfying the equation.  
\item If $\sqrt{q} \equiv 2 \pmod 3$, the solutions are given by $\beta = \alpha \omega^{\frac{q-1}{3}}$ and $\beta = \alpha \omega^{\frac{2(q-1)}{3}}$.
\end{enumerate}
\end{lemma}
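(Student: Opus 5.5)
Since $\alpha,\beta\in\mu_{\sqrt{q}+1}$ are nonzero, we may put $t=\beta/\alpha$. Then $t\in\mu_{\sqrt{q}+1}$, because $\mu_{\sqrt{q}+1}$ is a group. Dividing the equation by $\alpha^2$ shows that it is equivalent to
\[
t^2+t+1=0 .
\]
So the problem reduces to determining the elements $t\in\mu_{\sqrt{q}+1}$ with $t^2+t+1=0$, and then setting $\beta=\alpha t$.

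In characteristic $p\neq 3$, the roots of $t^2+t+1$ are exactly the two primitive cube roots of unity, namely $\omega^{\frac{q-1}{3}}$ and $\omega^{\frac{2(q-1)}{3}}$. These lie in $\mathbb{F}_q$ since $3\mid q-1$ whenever $p\ne3$. By Lemma \ref{3rdroot}, $\mu_3\subset\mu_{\sqrt{q}+1}$ holds exactly when $\sqrt{q}\equiv 2\pmod 3$. When $\sqrt{q}\equiv 1\pmod 3$, we have instead $\mu_3\subset\mathbb{F}_{\sqrt{q}}^*$. In that case a nontrivial cube root $t$ would have $t^{\sqrt{q}+1}=t^2\neq1$, so it is not in $\mu_{\sqrt{q}+1}$. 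One can also argue that $\mu_{\sqrt{q}+1}\cap\mathbb{F}_{\sqrt{q}}^*=\{\pm1\}$, and neither $\pm1$ is a root. This proves (ii). For (iii) we get $\beta=\alpha\omega^{\frac{q-1}{3}}$ or $\beta=\alpha\omega^{\frac{2(q-1)}{3}}$.

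In case (i) we have $\sqrt{q}\equiv0\pmod 3$, so $p=3$. Then
\[
t^2+t+1=t^2-2t+1=(t-1)^2,
\]
whose only root is $t=1$, which lies in $\mu_{\sqrt{q}+1}$. Hence the equation holds iff $\alpha=\beta$. Equivalently, $\alpha^2+\alpha\beta+\beta^2=(\alpha-\beta)^2$ in characteristic $3$.

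The only delicate point is the case split on $\sqrt{q}\bmod 3$. In particular, one must check that $\sqrt{q}\equiv0\pmod3$ is the same as $p=3$, so that characteristic $2$ falls under cases (ii) and (iii) and the cube-root argument applies uniformly there.
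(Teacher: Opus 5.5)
Your proof is correct and is essentially the paper's argument. The paper uses $\alpha^3-\beta^3=(\alpha-\beta)(\alpha^2+\alpha\beta+\beta^2)$ to write $\beta=\alpha\zeta$ with $\zeta\in\mu_3$ (forcing $\zeta=1$ when $p=3$) and then invokes Lemma~\ref{3rdroot}, which matches your reduction to $t^2+t+1=0$ with $t=\beta/\alpha$; your check that a primitive cube root $t\in\mathbb{F}_{\sqrt{q}}^*$ has $t^{\sqrt{q}+1}=t^2\neq 1$ just makes explicit the step for (ii) that the paper leaves to that lemma.
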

\begin{proof}
Since $\alpha^3 - \beta^3 = (\alpha - \beta)(\alpha^2 + \alpha\beta + \beta^2) = 0$, we have $\beta = \alpha\zeta$ for some $\zeta \in \mu_3$.
If $p = 3$, i.e., $\sqrt{q} \equiv 0\pmod{3}$, then $\zeta = 1$, which implies $\alpha = \beta$.
When $p \neq 3$, $\zeta\in\mu_3 \setminus \{1\}$, and  we obtain (ii) and (iii), by Lemma \ref{3rdroot}.
\end{proof}
\begin{lemma}\label{conja2+ab+b2=0}
Consider the equation $\alpha^2 + \alpha\beta + \beta^2 = 0$ for $\alpha, \beta \in \mathbb{F}_q$ such that $\alpha^{\sqrt{q}} = \beta^{-1}$. Then the following hold:
\begin{enumerate}[\rm(i)]
\item If $\sqrt{q} \equiv 0 \pmod 3$, the equation is satisfied if and only if $\alpha = \beta$. In this case, $\alpha$ (and thus $\beta$) necessarily belongs to $\mu_{\sqrt{q}+1}$.
\item If $\sqrt{q} \equiv 2 \pmod 3$, there are no such $\alpha$ and $\beta$ satisfying the given conditions.
\item If $\sqrt{q} \equiv 1 \pmod 3$, the solutions are given by $\beta = \alpha \omega^{\frac{q-1}{3}}$ or $\beta = \alpha \omega^{\frac{2(q-1)}{3}}$. In this case, the number of such $(\alpha,\beta)$'s is $2(\sqrt{q}+1)$.
\end{enumerate}
\end{lemma}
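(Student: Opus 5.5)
Since $\alpha^3-\beta^3=(\alpha-\beta)(\alpha^2+\alpha\beta+\beta^2)$, I will follow the proof of Lemma \ref{ma2+ab+b2=0}: the equation $\alpha^2+\alpha\beta+\beta^2=0$ holds exactly when $\beta=\alpha\zeta$ for some $\zeta\in\mu_3$ with $\zeta\neq1$. In characteristic $3$ this should be read as $(\alpha-\beta)^2=0$, i.e.\ $\alpha=\beta$. (Note $\alpha\neq 0$, since $\alpha^{\sqrt{q}}=\beta^{-1}$.) Then I substitute $\beta=\alpha\zeta$ into the constraint $\alpha^{\sqrt{q}}=\beta^{-1}$. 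This gives
\[
\alpha^{\sqrt{q}+1}=\zeta^{-1},
\]
so everything reduces to deciding whether $\zeta^{-1}$ is a norm and counting the preimages under $\mathrm{Nm}$, using Lemma \ref{3rdroot}.

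\textbf{Case (i), $\sqrt{q}\equiv0\pmod3$.} Here $p=3$, and $\alpha^2+\alpha\beta+\beta^2=(\alpha-\beta)^2$, so the equation forces $\alpha=\beta$. The constraint then reads $\alpha^{\sqrt{q}}=\alpha^{-1}$, i.e.\ $\alpha\in\mu_{\sqrt{q}+1}$. Conversely, every such $\alpha$ gives a solution with $\beta=\alpha$.

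\textbf{Case (ii), $\sqrt{q}\equiv2\pmod3$.} By Lemma \ref{3rdroot}(ii)(a), $\mu_3\subset\mu_{\sqrt{q}+1}$. Since $\gcd(3,\sqrt{q}-1)=1$, the only cube root of unity in $\mathbb{F}_{\sqrt{q}}^*$ is $1$. But $\alpha^{\sqrt{q}+1}=\mathrm{Nm}(\alpha)$ lies in $\mathbb{F}_{\sqrt{q}}^*$, so it cannot equal $\zeta^{-1}\neq1$. Hence there are no solutions.

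\textbf{Case (iii), $\sqrt{q}\equiv1\pmod3$.} By Lemma \ref{3rdroot}(ii)(b), $\mu_3\subset\mathbb{F}_{\sqrt{q}}^*$, so $\zeta^{-1}$ is in the image of $\mathrm{Nm}$. Since $\mathrm{Nm}$ is surjective with kernel of order $\sqrt{q}+1$, each of the two choices $\zeta=\omega^{\frac{q-1}{3}}$ and $\zeta=\omega^{\frac{2(q-1)}{3}}$ gives exactly $\sqrt{q}+1$ values of $\alpha$, and then $\beta=\alpha\zeta$ is determined. Conversely, any such pair satisfies both the equation and the constraint. The count is therefore $2(\sqrt{q}+1)$. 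The two families are disjoint because $\zeta$ is recovered as $\beta/\alpha$.

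The only delicate points are the characteristic-$3$ factorization and checking $\mathrm{Nm}(\alpha)=\zeta^{-1}$ correctly in each case. The rest is routine.
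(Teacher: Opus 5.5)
Your proof is correct and follows essentially the same route as the paper: write $\beta=\alpha\zeta$ with $\zeta\in\mu_3$, substitute into $\alpha^{\sqrt{q}}=\beta^{-1}$ to get a norm condition on $\alpha$, then apply Lemma~\ref{3rdroot} and use that each nonempty fibre of $\mathrm{Nm}$ has $\sqrt{q}+1$ elements. Your version is slightly more careful than the paper's: you correctly obtain $\alpha^{\sqrt{q}+1}=\zeta^{-1}$ (the paper writes $\zeta$, a harmless slip), and you explicitly check the converse and that the two families are disjoint.
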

\begin{proof}
As in the proof of Lemma~\ref{ma2+ab+b2=0}, we have $\beta = \alpha\zeta$ for some $\zeta \in \mu_3$.
When $p = 3$, we have $\zeta = 1$ and $\alpha = \beta$, which implies $\alpha \in \mu_{\sqrt{q}+1}$.
When $p \neq 3$, we have $\zeta \in \mu_3 \setminus \{1\}$. Substituting $\beta = \alpha\zeta$ into $\alpha^{\sqrt{q}} = \beta^{-1}$ yields $\zeta = \alpha^{ \sqrt{q}+1 } \in \mathbb{F}_{\sqrt{q}}$. Thus, $\zeta \in \mathbb{F}_{\sqrt{q}}^*$.

Now, applying Lemma~\ref{3rdroot}, we obtain (ii) and the first assertion of (iii). 
For each $\zeta \in \mu_3 \setminus \{1\} = \{\omega^{\pm \frac{q-1}{3}}\}$, the equation $\alpha^{\sqrt{q}+1} = \zeta$ in $\alpha$ has $\sqrt{q}+1$ solutions in $\mathbb{F}_q$. 
Hence, the second assertion of (iii) follows.
\end{proof}
\begin{lemma}\label{S1S2S3}
Let three sets as follows;
\begin{equation}
\begin{split}
S_1 &= \{(\alpha, \beta, \gamma) \in (\mu_{\sqrt{q}+1})^3 \mid \alpha, \beta, \gamma \text{ are roots of } f_c \text{ for some } c \in \mathbb{F}q^* \}  \\
S_2 &= \{(\alpha, \beta, \gamma) \in (\mu_{\sqrt{q}+1})^3 \mid \alpha+\beta \neq 1, \ \alpha^2 +\alpha\beta+\beta^2 \neq 0, \ \gamma = \frac{\alpha+\beta-\alpha\beta}{\alpha+\beta-1} \} \\
S_3 &= \{(\alpha, \beta, \gamma) \in (\mu_{\sqrt{q}+1})^3 \mid \alpha+\beta = 1, \ \gamma \neq -1 \}.
\end{split}
\end{equation}
Then $S_1$ is the disjoint union of $S_2$ and $S_3$.
\end{lemma}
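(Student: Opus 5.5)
The plan is to prove the two inclusions $S_2\cup S_3\subseteq S_1$ and $S_1\subseteq S_2\cup S_3$, and then check that $S_2\cap S_3=\emptyset$.

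\emph{Disjointness.} This is immediate. Every triple in $S_2$ has $\alpha+\beta\neq 1$, while every triple in $S_3$ has $\alpha+\beta=1$.

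\emph{The inclusion $S_2\cup S_3\subseteq S_1$.} For $(\alpha,\beta,\gamma)\in S_2$, Lemma~\ref{thirdone}~(i) shows that $\alpha,\beta,\gamma$ are the roots of $f_c$ with $c=\bigl(\frac{\alpha^2+\alpha\beta+\beta^2}{\alpha+\beta-1}\bigr)^{\sqrt{q}}$. This $c$ is nonzero precisely because $\alpha^2+\alpha\beta+\beta^2\neq 0$, so the triple lies in $S_1$. For $(\alpha,\beta,\gamma)\in S_3$, Lemma~\ref{thirdone}~(ii) gives $c=(1+\gamma)^{\sqrt{q}}$, which is nonzero because $\gamma\neq -1$. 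Hence the triple again lies in $S_1$.

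\emph{The inclusion $S_1\subseteq S_2\cup S_3$.} Let $(\alpha,\beta,\gamma)\in S_1$ be the roots of $f_c$ with $c\in\mathbb{F}_q^*$. Vieta's formulas give
\[
\alpha+\beta+\gamma=c^{\sqrt{q}}=\alpha\beta+\gamma(\alpha+\beta).
\]
We split into two cases according to whether $\alpha+\beta=1$.

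First suppose $\alpha+\beta=1$. The second identity becomes $1+\gamma=\alpha\beta+\gamma$, so $\alpha\beta=1$, consistent with Lemma~\ref{sum1prod1}. Moreover $c^{\sqrt{q}}=1+\gamma$, and since $c\neq 0$ we get $\gamma\neq -1$. Thus the triple lies in $S_3$.

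Now suppose $\alpha+\beta\neq 1$. Subtracting the two expressions for $c^{\sqrt{q}}$ gives
\[
\gamma(\alpha+\beta-1)=\alpha+\beta-\alpha\beta,
\]
so $\gamma=\frac{\alpha+\beta-\alpha\beta}{\alpha+\beta-1}$. Substituting back, $c^{\sqrt{q}}=\frac{\alpha^2+\alpha\beta+\beta^2}{\alpha+\beta-1}$; this is the same computation as in Lemma~\ref{thirdone}~(i). Since $c\neq 0$, we must have $\alpha^2+\alpha\beta+\beta^2\neq 0$. Hence the triple lies in $S_2$.

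\emph{Remaining checks.} The membership $\gamma\in\mu_{\sqrt{q}+1}$ needs no argument, since it is built into the definitions of all three sets. The only mildly delicate step is ensuring that $c\neq 0$ matches the side conditions in each direction. The decisive observation is that $\alpha^2+\alpha\beta+\beta^2=0$ (respectively $\gamma=-1$) is exactly what makes $c=0$. The rest is routine algebra.
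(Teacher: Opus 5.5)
Your proof is correct and follows essentially the same route as the paper: Vieta's formulas and the case split on whether $\alpha+\beta=1$ give $S_1\subseteq S_2\cup S_3$, with $c\neq 0$ forcing $\alpha^2+\alpha\beta+\beta^2\neq 0$ in one case and $\gamma\neq -1$ in the other, while the reverse inclusion uses the explicit nonzero choice of $c$. The only difference is that you cite Lemma~\ref{thirdone} for the reverse inclusion, whereas the paper redoes the reciprocal computation of Lemma~\ref{deg2=1} inline. Lemma~\ref{thirdone} as stated presupposes that $\alpha,\beta$ are already roots of some $f_c$, but its proof is purely constructive and never uses that hypothesis, so your citation is legitimate.
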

\begin{proof}

Let $(\alpha, \beta, \gamma) \in S_1$. From the relations $\alpha + \beta + \gamma = c^{\sqrt{q}}$ and $\alpha\beta + \beta\gamma + \alpha\gamma = c^{\sqrt{q}}$, we obtain the identity:
$$\gamma(\alpha + \beta - 1) = \alpha + \beta - \alpha\beta$$

Case 1: $\alpha + \beta \neq 1$. In this case, $\gamma = \frac{\alpha + \beta - \alpha\beta}{\alpha + \beta - 1}$. Since $c \neq 0$, the relation $c^{\sqrt{q}} = \frac{\alpha^2 + \alpha\beta + \beta^2}{\alpha + \beta - 1}$ implies $\alpha^2 + \alpha\beta + \beta^2 \neq 0$. Thus, $(\alpha, \beta, \gamma) \in S_2$.

Case 2: $\alpha + \beta = 1$ . The relation $c^{\sqrt{q}} = \alpha + \beta + \gamma = 1 + \gamma$ implies $\gamma \neq -1$ since $c \neq 0$. Thus, $(\alpha, \beta, \gamma) \in S_3$.

Therefore, $S_1 \subseteq S_2 \cup S_3$.

Now we prove $S_2 \cup S_3 \subseteq S_1$. If $(\alpha, \beta, \gamma) \in S_2$, let $c = \left( \frac{\alpha^2 + \alpha\beta + \beta^2}{\alpha + \beta - 1} \right)^{\sqrt{q}}$. Then $\alpha + \beta + \gamma = c^{\sqrt{q}} = \alpha\beta + \beta\gamma + \alpha\gamma$. Since $\frac{c^{\sqrt{q}}}{\alpha\beta\gamma} = \frac{1}{\alpha} + \frac{1}{\beta} + \frac{1}{\gamma} = (\alpha + \beta + \gamma)^{\sqrt{q}} = c$, we obtain $\alpha\beta\gamma = c^{\sqrt{q}-1}$. Thus, $S_2 \subseteq S_1$. If $(\alpha, \beta, \gamma) \in S_3$, then, since $\alpha + \beta = 1$ and $\alpha, \beta \in \mu_{\sqrt{q}+1}$, we have $1 = (\alpha + \beta)^{\sqrt{q}} = \frac{1}{\alpha} + \frac{1}{\beta} = \frac{1}{\alpha\beta}$, which implies $\alpha\beta = 1$. Setting $c = (1 + \gamma)^{\sqrt{q}}$ leads to $c^{\sqrt{q}} = 1 + \gamma = \alpha\beta + \beta\gamma + \alpha\gamma$ and $\alpha\beta\gamma = c^{\sqrt{q}-1}$ by the same logic as above. Thus, $S_3 \subseteq S_1$. Consequently, $S_2 \cup S_3 \subseteq S_1$.

Thus the equality $S_1 = S_2 \cup S_3$ is proved.
\end{proof}
\begin{lemma}\label{polysq03}
Assume that $\sqrt{q} \equiv 0 \pmod{3}$. Under this condition, we observe the following properties regarding the  polynomials $f_c$:

\begin{enumerate}[\rm(i)]
    \item There exists no polynomial $f_c$ with $c \neq 0$ that possesses a triple root.
    
    \item The number of polynomials $f_c$ having a double root is $\sqrt{q}$. In this case, both the double root and the remaining simple root belong to
    $\mu_{\sqrt{q}+1}$.
    
    \item The number of polynomials $f_c$ that have three distinct roots, all of which belong to
    $\mu_{\sqrt{q}+1}$,
    is given by $\dfrac{q-\sqrt{q}}{6}$.
\end{enumerate}
\end{lemma}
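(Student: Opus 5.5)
Assume $p=3$, so $\sqrt{q}\equiv 0 \pmod 3$. I would handle the three assertions in turn, relying mainly on Lemma \ref{p3doubleroot}, Lemma \ref{S1S2S3}, Lemma \ref{mua+b=1}(ii) and Lemma \ref{ma2+ab+b2=0}(i).

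For (i), Theorem \ref{5polys}(i) already says a triple root forces $c=3$ and $p\neq 3$. With $p=3$ this gives $c=0$, which is excluded. As a direct check, a triple root would also be a double root, so it equals $2$ by Lemma \ref{p3doubleroot}. Then $f_c=(\lambda-2)^3=\lambda^3-8=\lambda^3+1$, whose $\lambda^2$-coefficient gives $c^{\sqrt{q}}=0$, a contradiction.

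For (ii), Lemma \ref{p3doubleroot} says any double root must be $2$, and that $2\in\mu_{\sqrt{q}+1}$ because $2=-1$ and $\sqrt{q}+1$ is even. The same lemma shows that the polynomials with $2$ as a double root are exactly $f_c$ with $c=(1+\beta)^{\sqrt{q}}$, where $\beta\in\mu_{\sqrt{q}+1}\setminus\{2\}$. The simple root $\beta$ then lies in $\mu_{\sqrt{q}+1}$. The map $\beta\mapsto c$ is injective, since $c^{\sqrt{q}}=1+\beta$ and Frobenius is bijective. Hence there are $(\sqrt{q}+1)-1=\sqrt{q}$ such polynomials.

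For (iii), I count ordered triples in $S_1$ with pairwise distinct entries and divide by $3!=6$. Different $c$ give different $f_c$, since $c^{\sqrt{q}}$ is a coefficient, so each such unordered triple determines one $f_c$.
\begin{itemize}
\item By Lemma \ref{S1S2S3}, $S_1=S_2\sqcup S_3$.
\item By Lemma \ref{mua+b=1}(ii), $\alpha+\beta=1$ forces $\alpha=\beta=2$. So $S_3$ has no triples with distinct entries.
\item In $S_2$, by Lemma \ref{ma2+ab+b2=0}(i), the condition $\alpha^2+\alpha\beta+\beta^2\neq 0$ is equivalent to $\alpha\neq\beta$. The condition $\alpha+\beta\neq 1$ removes only the pair $(2,2)$, which already has $\alpha=\beta$.
\end{itemize}
So $S_2$ is indexed by the ordered pairs $\alpha\neq\beta$ in $\mu_{\sqrt{q}+1}$. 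There are $(\sqrt{q}+1)\sqrt{q}$ of them, each with $\gamma=\frac{\alpha+\beta-\alpha\beta}{\alpha+\beta-1}$.

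The main obstacle is to subtract the pairs for which $\gamma\in\{\alpha,\beta\}$, that is, where $f_c$ has a repeated root.
\begin{itemize}
\item Setting $\gamma=\alpha$ gives $\alpha(\alpha+\beta-1)=\alpha+\beta-\alpha\beta$, i.e. $\alpha^2+2\alpha\beta-2\alpha-\beta=0$.
\item In characteristic $3$ this becomes $\alpha^2-\alpha\beta+\alpha-\beta=(\alpha+1)(\alpha-\beta)=0$.
\item Since $\alpha\ne\beta$, this forces $\alpha=-1=2$.
\item Symmetrically, $\gamma=\beta$ forces $\beta=2$.
\end{itemize}
So the bad ordered pairs are those with $\alpha=2$ or $\beta=2$ and $\alpha\ne\beta$, which are $2\sqrt{q}$ in number. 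These match the $\sqrt{q}$ double-root polynomials from (ii), each of which appears in two orderings with a repeated entry.

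The number of ordered triples with distinct entries is therefore
$$(\sqrt{q}+1)\sqrt{q}-2\sqrt{q}=q-\sqrt{q}.$$
Dividing by the $6$ orderings gives $\frac{q-\sqrt{q}}{6}$ polynomials. This is an integer because $3\mid\sqrt{q}$ and $\sqrt{q}(\sqrt{q}-1)$ is even.
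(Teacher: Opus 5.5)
Your proof is correct and follows essentially the same route as the paper. For (i) and (ii) you use the same lemmas, and for (iii) the same decomposition $S_1=S_2\sqcup S_3$, with $S_2$ indexed by the $(\sqrt{q}+1)\sqrt{q}$ ordered pairs $\alpha\neq\beta$. The only difference is bookkeeping: the paper computes $|S_1|=q+2\sqrt{q}$ and subtracts the $3\sqrt{q}$ repeated-entry triples supplied by (ii), whereas you discard $S_3$ and find the $2\sqrt{q}$ repeated-root pairs in $S_2$ directly via $(\alpha+1)(\alpha-\beta)=0$, which also independently confirms the count in (ii).
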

\begin{proof}
(i) By Theorem \ref{5polys}, there exists no $f_c$ with $c\ne 0$ that possesses a triple root. 
(ii) By Lemma \ref{p3doubleroot}, for each $\beta \in \mu_{\sqrt{q}+1} \setminus \{2\}$, if $c = (1+\beta)^{\sqrt{q}}$, then the polynomial $f_c$ possesses a double root at $2$ and a simple root at $\beta$. Furthermore, these are the only polynomials $f_c$ (with $c \neq 0$) that possess a double root in characteristic $3$.
Therefore, there are exactly $\sqrt{q}$ polynomials $f_c$ (with $c \in \mathbb{F}_q^*$) that possess a double root but not a triple root. 

(iii) For each $\beta \in \mu_{\sqrt{q}+1} \setminus \{2\}$, the triples $(2, 2, \beta)$, $(2, \beta, 2)$, and $(\beta, 2, 2)$ are elements of $S_1$, defined in Lemma \ref{S1S2S3}, and correspond to the roots of the same polynomial $f_c$. Consequently, the total number of triples in $S_1$ arising from these $\sqrt{q}$ polynomials with double roots is $3\sqrt{q}$.
Since $S_1 = S_2 \cup S_3$ is a disjoint union, we determine $|S_1|$ by calculating $|S_2|$ and $|S_3|$ independently.

1. Cardinality of $S_2$: The elements of $S_2$ are determined by pairs $(\alpha, \beta) \in \mu_{\sqrt{q}+1}^2$ satisfying $\alpha + \beta - 1 \neq 0$ and $\alpha^2 + \alpha\beta + \beta^2 \neq 0$. By Lemma \ref{mua+b=1}, if $\alpha + \beta = 1$, then $\alpha\beta = 1$, which implies $\alpha^2 + \alpha\beta + \beta^2 = (\alpha+\beta)^2 - \alpha\beta = 1 - 1 = 0$. Thus, it suffices to count pairs satisfying $\alpha^2 + \alpha\beta + \beta^2 \neq 0$. According to Lemma \ref{ma2+ab+b2=0}, the condition $\alpha^2 + \alpha\beta + \beta^2 = 0$ holds if and only if $\alpha = \beta$ (in characteristic 3). Since there are $\sqrt{q}+1$ such pairs, we have:
$$|S_2| = (\sqrt{q}+1)^2 - (\sqrt{q}+1) = q + \sqrt{q}$$

2. Cardinality of $S_3$: For $S_3$, Lemma \ref{mua+b=1} states that $\alpha + \beta = 1$ implies $\alpha = \beta = 2$. For this fixed pair, there are exactly $\sqrt{q}$ choices for $\gamma \in \mu_{\sqrt{q}+1} \setminus \{-1\}$. Thus: 
$$|S_3| = \sqrt{q}.$$

3. Total Cardinality and Number of Polynomials: Therefore, the total number of triples in $S_1$ is:
$$|S_1| = (q + \sqrt{q}) + \sqrt{q} = q + 2\sqrt{q}.$$
As previously noted, $S_1$ contains no triples with three identical components. The number of triples with exactly two identical components is $3\sqrt{q}$. Subtracting these, the number of triples consisting of three distinct elements is:$$(q + 2\sqrt{q}) - 3\sqrt{q} = q - \sqrt{q}$$Since each such triple has $3! = 6$ permutations that correspond to the same set of roots for a single polynomial $f_c$, the number of distinct polynomials $f_c$ possessing three distinct roots in $\mu_{\sqrt{q}+1}$ is:
$$\frac{q - \sqrt{q}}{6}.$$
\end{proof}
\begin{lemma}\label{polysq13}
Assume that $\sqrt{q} \equiv 1 \pmod{3}$. Under this condition, we observe the following properties regarding the characteristic polynomials $f_c$:

\begin{enumerate}[\rm(i)]
    \item There exists a unique polynomial $f_c$ with $c \neq 0$ that has a triple root. In this case, the triple root is 1 and $c=3$ (or $c=1$ if $p=2$).
    
    \item The number of polynomials $f_c$ having a double root is $\sqrt{q}$. In this case, both the double root and the remaining simple root belong to
    $\mu_{\sqrt{q}+1}$.
    
    \item The number of polynomials $f_c$ that have three distinct roots, all of which belong to
    $\mu_{\sqrt{q}+1}$,
    is given by $\dfrac{q-\sqrt{q}}{6}$.
\end{enumerate}
\end{lemma}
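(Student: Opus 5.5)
The plan is to mirror the proof of Lemma \ref{polysq03}, now with $p\ne 3$ and $\sqrt{q}\equiv 1 \pmod 3$, and count ordered triples in $S_1$ via the disjoint decomposition $S_1=S_2\cup S_3$ of Lemma \ref{S1S2S3}.

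\textbf{Triple root.} By Theorem \ref{5polys} (Case 1), a triple root forces $\lambda_1=1$ and $c=3$ with $p\neq 3$. Conversely, $f_3(\lambda)=(\lambda-1)^3$ as in the Remark after Lemma \ref{p3doubleroot}. When $p=2$ we have $3=1$, so $c=1$. Hence there is exactly one such polynomial, and $1\in\mu_{\sqrt{q}+1}$.

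\textbf{Double roots.} By Lemma \ref{doublealpha}, each $\alpha\in\mu_{\sqrt{q}+1}$ is the double root of exactly one $f_c$, and the remaining root $\frac{\alpha(2-\alpha)}{2\alpha-1}$ also lies in $\mu_{\sqrt{q}+1}$. The remaining root coincides with $\alpha$ exactly when $\alpha=1$, which is the triple-root case.
\begin{itemize}
\item The map $\alpha\mapsto c$ is injective, because a polynomial with a double (not triple) root determines that root.
\item By Theorem \ref{5polys}(i),(ii), every multiple root lies in $\mu_{\sqrt{q}+1}$.
\end{itemize}
So the $\sqrt{q}+1-1=\sqrt{q}$ values $\alpha\ne1$ give exactly $\sqrt{q}$ polynomials with a double but not triple root.

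A caveat is needed for $p=2$: Lemma \ref{doublealpha} uses $2\ne0$ and so fails there. I would handle $p=2$ directly. We have $f_c'=\lambda^2+c^{\sqrt q}$, so a double root is $\alpha=c^{\sqrt q/2}$. Then $f_c(\alpha)=\alpha^3+\alpha^4+\alpha^3+\alpha^{4-2/\sqrt q\cdot}\dots$; more cleanly, $(\lambda-\alpha)^2(\lambda-\beta)$ matched to the coefficient identities gives $\alpha^2=\alpha^2+2\alpha\beta=c^{\sqrt q}=2\alpha+\beta=\beta$. Hence $\beta=\alpha^2$, and the constant term gives $\alpha^4=\alpha^2\beta=c^{\sqrt q-1}=\alpha^{2\sqrt q-2}$. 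This yields a bijection with $\alpha\in\mu_{\sqrt q+1}\setminus\{1\}$ after checking norms. I expect this characteristic-$2$ bookkeeping to be the main nuisance.

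\textbf{Three distinct roots in $\mu_{\sqrt q+1}$.} Here I count $|S_1|$.
\begin{itemize}
\item By Lemma \ref{mua+b=1}, with $\sqrt q\equiv1\pmod3$ there is no pair with $\alpha+\beta=1$, so $S_3=\emptyset$.
\item By Lemma \ref{ma2+ab+b2=0}(ii), $\alpha^2+\alpha\beta+\beta^2\neq0$ always holds.
\item Hence $|S_2|$ counts all pairs $(\alpha,\beta)\in\mu_{\sqrt q+1}^2$, so $|S_1|=(\sqrt q+1)^2=q+2\sqrt q+1$.
\end{itemize}
Now subtract the degenerate triples. The triple-root polynomial contributes $1$ ordered triple, $(1,1,1)$. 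Each of the $\sqrt q$ double-root polynomials contributes $3$ ordered triples, for $3\sqrt q$ in total. One must check that $S_1$ indeed contains each ordering; this holds because all roots lie in $\mu_{\sqrt q+1}$ and every ordering of the root multiset of $f_c$ lies in $S_1$.

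This leaves $q+2\sqrt q+1-1-3\sqrt q=q-\sqrt q$ ordered triples of distinct elements. Each polynomial with three distinct roots in $\mu_{\sqrt q+1}$ accounts for exactly $6$ of them, so the number of such polynomials is $\frac{q-\sqrt q}{6}$.
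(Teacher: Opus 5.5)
Your proposal is correct and follows the paper's proof essentially step for step: the triple root comes from Theorem~\ref{5polys}, the $\sqrt{q}$ double-root polynomials from Lemma~\ref{doublealpha}, and the count is $|S_1|=|S_2|+|S_3|=(\sqrt{q}+1)^2+0$, minus the $1+3\sqrt{q}$ ordered triples with repeated entries, divided by $6$. Your separate characteristic-$2$ detour is not needed, since the formulas of Lemma~\ref{doublealpha} still hold there ($2\alpha-1=1$ gives third root $\alpha^2$ and $c^{\sqrt{q}}=\alpha^2$), and as written it has a sign slip: from $c=\alpha^{2\sqrt{q}}$ one gets $c^{\sqrt{q}-1}=\alpha^{2-2\sqrt{q}}$, not $\alpha^{2\sqrt{q}-2}$, and it is $\alpha^4=\alpha^{2-2\sqrt{q}}$, i.e.\ $\alpha^{2(\sqrt{q}+1)}=1$, that forces $\alpha\in\mu_{\sqrt{q}+1}$.
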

\begin{proof}
The proof follows the same lines as that of Lemma \ref{polysq03} with slight modifications. (i) By Theorem \ref{5polys}, there exists a unique $f_c$ with $c \neq 0$ that possesses a triple root. Specifically, for $c=3$, the triple root is $\alpha=1$, and $(1,1,1) \in S_1$. 

(ii) By Lemma \ref{doublealpha} and Lemma \ref{p3doubleroot}, there are exactly $\sqrt{q}$ polynomials $f_c$ (with $c \neq 0$), each of which possesses a double root $\alpha \in \mu_{\sqrt{q}+1} \setminus \{1\}$ and a third root $\frac{\alpha(2-\alpha)}{2\alpha - 1} \in \mu_{\sqrt{q}+1}$. 

(iii) The total number of triples in $S_1$ arising from these $\sqrt{q}$ polynomials with double roots is $3\sqrt{q}$.
Since $S_1 = S_2 \cup S_3$ is a disjoint union, we determine $|S_1|$ by calculating $|S_2|$ and $|S_3|$ independently.

1. Cardinality of $S_2$: The elements of $S_2$ are determined by pairs $(\alpha, \beta) \in \mu_{\sqrt{q}+1}^2$ satisfying $\alpha + \beta - 1 \neq 0$ and $\alpha^2 + \alpha\beta + \beta^2 \neq 0$. By Lemma \ref{mua+b=1}, if $\alpha + \beta = 1$, then $\alpha\beta = 1$, which implies $\alpha^2 + \alpha\beta + \beta^2 = 0$. Thus, it suffices to count pairs satisfying $\alpha^2 + \alpha\beta + \beta^2 \neq 0$. According to Lemma \ref{ma2+ab+b2=0}, the condition $\alpha^2 + \alpha\beta + \beta^2 = 0$ has no solutions. Thus  we have:
$$|S_2| = (\sqrt{q}+1)^2 $$

2. Cardinality of $S_3$: For $S_3$, Lemma \ref{mua+b=1} states that $\alpha + \beta = 1$ has no solution, and hence 
$$|S_3| =0$$

3. Total Cardinality and Number of Polynomials: Therefore, the total number of triples in $S_1$ is:
$$|S_1| = ( \sqrt{q}+1)^2$$
As previously noted, $S_1$ contains only one triple $(1,1,1)$ with three identical components. The number of triples with exactly two identical components is $3\sqrt{q}$. Subtracting these, the number of triples consisting of three distinct elements is:
$$( 2\sqrt{q}+1)^2 -1 - 3\sqrt{q} = q - \sqrt{q}.$$
Thus the number of distinct polynomials $f_c$ possessing three distinct roots in $\mu_{\sqrt{q}+1}$ is:
$$\frac{q - \sqrt{q}}{6}.$$
\end{proof}
\begin{lemma}\label{polysq23}
Assume that $\sqrt{q} \equiv 2 \pmod{3}$. Under this condition, we observe the following properties regarding the  polynomials $f_c$:

\begin{enumerate}[\rm(i)]
    \item There exists a unique  polynomial $f_c$ with $c \neq 0$ that has a triple root. In this case, the triple root is 1 and $c=3$ (or $c=1$ if $p=2$).  
    
    \item The number of polynomials $f_c$ having a double root is $\sqrt{q}$. In this case, both the double root and the remaining simple root belong to
    $\mu_{\sqrt{q}+1}$.
    
    \item The number of polynomials $f_c$ that have three distinct roots, all of which belong to
    $\mu_{\sqrt{q}+1}$,
    is given by $\dfrac{q-\sqrt{q}-2}{6}$.
\end{enumerate}
\end{lemma}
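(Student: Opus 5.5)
I will follow the proof of Lemma \ref{polysq13} closely, using the decomposition $S_1 = S_2 \sqcup S_3$ from Lemma \ref{S1S2S3} to count ordered triples, and then correct for repeated roots.

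For (i), Theorem \ref{5polys} (Case 1) shows that a triple root forces $\lambda=1$ and $c=3$. This requires $p\neq 3$, which holds here because $\sqrt{q}\equiv 2 \pmod 3$. Note that $c=3$ is read as $c=1$ when $p=2$. The triple $(1,1,1)$ lies in $S_1$, since $1\in\mu_{\sqrt{q}+1}$. For (ii), since $p\ne 3$, Lemma \ref{doublealpha} gives, for each $\alpha\in\mu_{\sqrt{q}+1}$, a unique $c$ with $\alpha$ a double root. The third root $\frac{\alpha(2-\alpha)}{2\alpha-1}$ also lies in $\mu_{\sqrt{q}+1}$. It equals $\alpha$ exactly when $\alpha=1$, by the Remark following Lemma \ref{p3doubleroot}. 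The assignment $\alpha\mapsto c$ is injective, because a given $f_c$ has only one double root. Excluding $\alpha=1$ therefore leaves exactly $\sqrt{q}$ polynomials with a double (but not triple) root, and these contribute $3\sqrt{q}$ ordered triples to $S_1$.

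For (iii), I compute $|S_2|$ and $|S_3|$. By Lemma \ref{mua+b=1}, in the case $\sqrt{q}\equiv 2\pmod 3$ there are exactly two ordered pairs $(\alpha,\beta)\in\mu_{\sqrt{q}+1}^2$ with $\alpha+\beta=1$, namely the two orderings of the primitive cube roots of unity (for $p=2$) or of the primitive sixth roots of unity (for $p\ne 2,3$). Each such pair allows $\gamma\in\mu_{\sqrt{q}+1}\setminus\{-1\}$, which has $\sqrt{q}$ choices because $-1\in\mu_{\sqrt{q}+1}$. Hence $|S_3|=2\sqrt{q}$.

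For $S_2$, Lemma \ref{ma2+ab+b2=0}(iii) shows that $\alpha^2+\alpha\beta+\beta^2=0$ holds precisely when $\beta=\alpha\zeta$ with $\zeta\in\mu_3\setminus\{1\}$. This gives $2(\sqrt{q}+1)$ ordered pairs. By Remark \ref{sum1a2abb20}, every pair with $\alpha+\beta=1$ is among these. Thus the pairs with $\alpha+\beta\ne1$ and $\alpha^2+\alpha\beta+\beta^2\ne0$ number $(\sqrt{q}+1)^2-2(\sqrt{q}+1)$, so
\[
|S_2|=(\sqrt{q}+1)(\sqrt{q}-1)=q-1 .
\]
Therefore $|S_1| = q-1+2\sqrt{q}$. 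Removing the triple $(1,1,1)$ and the $3\sqrt{q}$ triples with exactly two equal entries leaves $q-\sqrt{q}-2$ ordered triples of distinct elements. Each polynomial with three distinct roots corresponds to $6$ of these, which gives $\frac{q-\sqrt{q}-2}{6}$.

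The main obstacle is the pair count in $S_2$. I must check that the two $\alpha+\beta=1$ pairs are genuinely contained in the set $\alpha^2+\alpha\beta+\beta^2=0$, so that they are not subtracted twice. I also must confirm that $\mu_3\subset\mu_{\sqrt{q}+1}$, which is Lemma \ref{3rdroot}(ii)(a), so that all $2(\sqrt{q}+1)$ pairs really lie in $\mu_{\sqrt{q}+1}^2$. A final sanity check is the divisibility $6\mid q-\sqrt{q}-2=(\sqrt{q}-2)(\sqrt{q}+1)$. Here $3$ divides $\sqrt{q}+1$, and the product is even in every characteristic: for $p=2$ the factor $\sqrt{q}-2$ is even, and for odd $p$ the factor $\sqrt{q}+1$ is even.
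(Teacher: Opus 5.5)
Your proposal is correct and follows the paper's proof step for step. The triple root comes from Theorem \ref{5polys}. The $\sqrt{q}$ double-root polynomials come from Lemma \ref{doublealpha}, together with Case 2 of Theorem \ref{5polys}; that case guarantees every double root lies in $\mu_{\sqrt{q}+1}$, which is what makes your injective map $\alpha\mapsto c$ exhaust all double-root polynomials, a point worth stating explicitly. The count is then $|S_2|=q-1$ and $|S_3|=2\sqrt{q}$, followed by subtracting $1+3\sqrt{q}$ and dividing by $6$. Your count of $2(\sqrt{q}+1)$ pairs with $\alpha^2+\alpha\beta+\beta^2=0$ is exactly the number the paper subtracts (its text misstates it as $\sqrt{q}+1$), and your divisibility check $6\mid(\sqrt{q}-2)(\sqrt{q}+1)$ is a good added sanity test.
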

\begin{proof}
The proof follows the same lines as that of Lemma \ref{polysq03} and \ref{polysq13} with slight modifications. (i) By Theorem \ref{5polys}, there exists a unique $f_c$ with $c \neq 0$ that possesses a triple root. Specifically, for $c=3$, the triple root is $\alpha=1$, and $(1,1,1) \in S_1$. 

(ii) By Lemma \ref{doublealpha} and Lemma \ref{p3doubleroot}, there are exactly $\sqrt{q}$ polynomials $f_c$ (with $c \neq 0$), each of which possesses a double root $\alpha \in \mu_{\sqrt{q}+1} \setminus \{1\}$ and a third root $\frac{\alpha(2-\alpha)}{2\alpha - 1} \in \mu_{\sqrt{q}+1}$. 

(iii) The total number of triples in $S_1$ arising from these $\sqrt{q}$ polynomials with double roots is $3\sqrt{q}$.
Since $S_1 = S_2 \cup S_3$ is a disjoint union, we determine $|S_1|$ by calculating $|S_2|$ and $|S_3|$ independently.

1. Cardinality of $S_2$: The elements of $S_2$ are determined by pairs $(\alpha, \beta) \in \mu_{\sqrt{q}+1}^2$ satisfying $\alpha + \beta - 1 \neq 0$ and $\alpha^2 + \alpha\beta + \beta^2 \neq 0$. By Lemma \ref{mua+b=1}, if $\alpha + \beta = 1$, then $\alpha\beta = 1$, which implies $\alpha^2 + \alpha\beta + \beta^2 = 0$. Thus, it suffices to count pairs satisfying $\alpha^2 + \alpha\beta + \beta^2 \neq 0$. According to Lemma \ref{ma2+ab+b2=0}, the condition $\alpha^2 + \alpha\beta + \beta^2 = 0$ has $(\sqrt{q}+1)$ solutions $(\alpha,\beta$ with $\beta=\alpha \omega^{\frac{q-1}{3}}$ or $\beta=\alpha \omega^{\frac{-(q-1)}{3}}$ and $\alpha\in\mu_{\sqrt{q}+1}$. Thus  we have:
$$|S_2| = (\sqrt{q}+1)^2 -2(\sqrt{q}+1)=q-1$$

2. Cardinality of $S_3$: For $S_3$, Lemma \ref{mua+b=1} states that $\alpha + \beta = 1$ has two pairs $(\alpha,\beta)$ as solutions, and hence 
$$|S_3| =2\sqrt{q}$$

3. Total Cardinality and Number of Polynomials: Therefore, the total number of triples in $S_1$ is:
$$|S_1| = q-1+2\sqrt{q}.$$
As previously noted, $S_1$ contains only one triple $(1,1,1)$ with three identical componets. The number of triples with exactly two identical components is $3\sqrt{q}$. Subtracting these, the number of triples consisting of three distinct elements is:
$$(q-1+2\sqrt{q}) -1 - 3\sqrt{q} = q - \sqrt{q}-2.$$
Thus the number of distinct polynomials $f_c$ possessing three distinct roots in $\mu_{\sqrt{q}+1}$ is:
$$\frac{q - \sqrt{q}-2}{6}.$$
\end{proof}
\begin{lemma}\label{conj3roots}
The number of characteristic polynomials that have three distinct simple roots, where only one of them belongs to
$\mu_{\sqrt{q}+1}$,
is:
\begin{equation*}
\begin{cases}
\dfrac{q-\sqrt{q}-2}{2} \mbox{ when }\sqrt{q} \equiv 0 \pmod{3} \\
\dfrac{q-\sqrt{q}-4}{2} \mbox{ when }\sqrt{q} \equiv 1 \pmod{3} \\
\dfrac{q-\sqrt{q}-2}{2} \mbox{ when }\sqrt{q} \equiv 2 \pmod{3} 
\end{cases}
\end{equation*}
\end{lemma}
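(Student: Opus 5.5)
The plan is to count unordered pairs $\{\beta,\gamma\}$ of conjugate-inverse roots, and then count how many polynomials $f_c$ each such pair produces. By Theorem~\ref{5polys}(iv), a polynomial $f_c$ of the type in question has roots $\alpha\in\mu_{\sqrt{q}+1}$ and $\beta,\gamma\in\mathbb{F}_q\setminus\mu_{\sqrt{q}+1}$ with $\gamma=\beta^{-\sqrt{q}}$. Conversely, for any $\beta\in\mathbb{F}_q^*$ put $\gamma=\beta^{-\sqrt{q}}$. Then $\beta=\gamma$ if and only if $\beta\in\mu_{\sqrt{q}+1}$. Hence the set
$$\mathcal{B}=\{(\beta,\beta^{-\sqrt{q}})\mid \beta\in\mathbb{F}_q^*\setminus\mu_{\sqrt{q}+1}\}$$
has exactly $q-1-(\sqrt{q}+1)=q-\sqrt{q}-2$ ordered pairs. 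Each unordered pair is counted twice, because $\gamma^{-\sqrt{q}}=\beta$. A polynomial is determined by its roots, so distinct unordered pairs $\{\beta,\gamma\}$ give distinct polynomials.

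For a fixed pair in $\mathcal{B}$, I would compare coefficients of $f_c=(\lambda-\alpha)(\lambda-\beta)(\lambda-\gamma)$. Equating the two coefficients $c^{\sqrt{q}}$ gives
$$\alpha(\beta+\gamma-1)=\beta+\gamma-\beta\gamma.$$
This splits into two cases. If $\beta+\gamma\neq 1$, then $\alpha$ is forced, and $c^{\sqrt{q}}=(\beta^2+\beta\gamma+\gamma^2)/(\beta+\gamma-1)$. Such a polynomial exists with $c\ne0$ exactly when $\beta^2+\beta\gamma+\gamma^2\neq0$, by Lemma~\ref{betgamconjfc}(i) together with Lemma~\ref{betgamconj}. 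In that case $\alpha\in\mu_{\sqrt{q}+1}$, so $\alpha$ is automatically different from $\beta$ and $\gamma$. If $\beta+\gamma=1$, then Lemma~\ref{betgamconjfc}(ii) gives exactly $\sqrt{q}$ admissible values of $c$, each with third root $\alpha=c^{\sqrt{q}-1}\in\mu_{\sqrt{q}+1}$. By Remark~\ref{sum1a2abb20}, the pairs with $\beta+\gamma=1$ form a subset of the pairs with $\beta^2+\beta\gamma+\gamma^2=0$. So the count equals
$$\tfrac12\Bigl(|\mathcal{B}|-\#\{\text{ordered pairs in }\mathcal{B}\text{ with }\beta^2+\beta\gamma+\gamma^2=0\}\Bigr)+\sqrt{q}\cdot\#\{\text{unordered pairs in }\mathcal{B}\text{ with }\beta+\gamma=1\}.$$

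Lemmas~\ref{conja2+ab+b2=0} and \ref{conja+b=1} supply the exceptional counts.
\begin{enumerate}[(i)]
\item When $\sqrt{q}\equiv0\pmod 3$, the only solutions have $\beta=\gamma\in\mu_{\sqrt{q}+1}$, and such pairs are not in $\mathcal{B}$. The count is $(q-\sqrt{q}-2)/2$.
\item When $\sqrt{q}\equiv2\pmod 3$, there are no solutions, and again the count is $(q-\sqrt{q}-2)/2$.
\item When $\sqrt{q}\equiv1\pmod 3$, there are $2(\sqrt{q}+1)$ ordered pairs with $\beta^2+\beta\gamma+\gamma^2=0$. Each such $\beta$ satisfies $\beta^{\sqrt{q}+1}=\zeta\neq1$, so all of these pairs lie in $\mathcal{B}$. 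Among them there is exactly one unordered pair with $\beta+\gamma=1$. That pair lies in $\mathbb{F}_{\sqrt{q}}^*$ and is not $\pm1$, so it is not in $\mu_{\sqrt{q}+1}$ and does lie in $\mathcal{B}$. The count is $\tfrac12(q-\sqrt{q}-2-2\sqrt{q}-2)+\sqrt{q}=(q-\sqrt{q}-4)/2$.
\end{enumerate}

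I expect the main obstacle to be the bookkeeping in case (iii), specifically the $\beta+\gamma=1$ pair. It is excluded from the generic count, yet it contributes $\sqrt{q}$ polynomials rather than one. I must also check that these $\sqrt{q}$ polynomials genuinely have three distinct roots and are not already counted elsewhere. Distinctness follows because $\alpha\in\mu_{\sqrt{q}+1}$ while $\beta,\gamma\notin\mu_{\sqrt{q}+1}$. They are not counted elsewhere because every polynomial in the generic count has $\beta+\gamma\neq1$, whereas these have $\beta+\gamma=1$.
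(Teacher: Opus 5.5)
Your proposal is correct and follows essentially the same route as the paper. You count the $(q-\sqrt{q}-2)/2$ conjugate-inverse pairs $\{\beta,\beta^{-\sqrt{q}}\}$ lying off $\mu_{\sqrt{q}+1}$, assign one polynomial to each generic pair, none to the pairs with $\beta^2+\beta\gamma+\gamma^2=0$ and $\beta+\gamma\neq 1$, and $\sqrt{q}$ to the pair with $\beta+\gamma=1$, via Lemmas~\ref{betgamconjfc}, \ref{conja+b=1} and \ref{conja2+ab+b2=0}. Your bookkeeping with unordered pairs, instead of the paper's ordered pairs divided by $2$, together with your explicit checks that the exceptional pairs lie in $\mathcal{B}$ and give three distinct roots, is if anything slightly more careful than the paper's write-up.
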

\begin{proof}
By Theorem \ref{5polys}, if the characteristic polynomial of $A$ has three distinct roots $\alpha, \beta, \gamma$ such that only $\alpha$ is a $(\sqrt{q}+1)$-th root of unity, then $\beta$ and $\gamma$ satisfy $\beta \neq \gamma$ and $\beta^{\sqrt{q}} = \gamma^{-1}$. The number of such pairs $\{\beta, \gamma\} = \{\beta, \beta^{-\sqrt{q}}\}$ is exactly
$$\frac{q - 1 - (\sqrt{q} + 1)}{2} = \frac{q - \sqrt{q} - 2}{2}.$$
Following the argument in Lemma \ref{betgamconjfc}, for each pair $\{\beta, \gamma\}$, the value of $\alpha$ is determined based on the following conditions:If $\beta + \gamma \neq 1$ and $\beta^2 + \beta\gamma + \gamma^2 \neq 0$: The value of $\alpha$ is uniquely determined as
$$\alpha = \frac{\beta + \gamma - \beta\gamma}{\beta + \gamma - 1}.$$
If $\beta + \gamma = 1$, by Lemma \ref{betgamconjfc}, there are exactly $\sqrt{q}$ possible values for $\alpha$.

In the case where $\sqrt{q} \equiv 0 \pmod 3$, Lemma \ref{conja+b=1} and Lemma \ref{ma2+ab+b2=0} imply that $\beta + \gamma = 1$ holds if and only if $\beta = \gamma = 2$, and $\beta^2 + \beta\gamma + \gamma^2 = 0$ holds only when $\beta = \gamma$. Since we are considering the case where $\beta \neq \gamma$, 
for each pair  $(\beta, \gamma)$ of the $\frac{q - \sqrt{q} - 2}{2}$ pairs, $\alpha$ is uniquely determined. Thus, the total number of such characteristic polynomials is $\frac{q - \sqrt{q} - 2}{2}$.

In the case where $\sqrt{q} \equiv 1 \pmod{3}$, the enumeration of the characteristic polynomials is conducted as follows. First, the number of pairs $(\beta, \gamma)$ such that $\beta^{\sqrt{q}} = \gamma^{-1}$ and $\beta \neq \gamma$ is $q - \sqrt{q} - 2$.  By Remark \ref{sum1a2abb20},   the condition $\beta + \gamma = 1$ implies $\beta^2 + \beta\gamma + \gamma^2 = 0$. By Lemma \ref{conja2+ab+b2=0}, the number of pairs $(\beta, \gamma)$ satisfying both $\beta^2 + \beta\gamma + \gamma^2 \neq 0$ and $\beta + \gamma \neq 1$ is calculated by:
$$(q - \sqrt{q} - 2) - 2(\sqrt{q} + 1) = q - 3\sqrt{q} - 4.$$ 
For each such pair $(\beta, \gamma)$, the root $\alpha$ is uniquely determined.
On the other hand, by Lemma \ref{conja+b=1}, there are exactly two pairs that satisfy the condition $\beta + \gamma = 1$. For each of these pairs, Lemma \ref{betgamconjfc} implies that there are $\sqrt{q}$ possible values for $\alpha$.
Consequently, the total number of triples $(\alpha, \beta, \gamma)$ such that $f_c(\alpha) = f_c(\beta) = f_c(\gamma) = 0$ for some $c \in \mathbb{F}_q^*$ is:
$$(q - 3\sqrt{q} - 4) + 2\sqrt{q} = q - \sqrt{q} - 4.$$
Since the characteristic polynomials corresponding to the triples $(\alpha, \beta, \gamma)$ and $(\alpha, \gamma, \beta)$ are identical, the total number of distinct characteristic polynomials is
$\frac{q - \sqrt{q} - 4}{2}.$

When $\sqrt{q} \equiv 2 \pmod 3$:
By Lemma \ref{conja+b=1} and Lemma \ref{ma2+ab+b2=0}, there exist no $\beta, \gamma$ satisfying $\beta + \gamma = 1$ or $\beta^2 + \beta\gamma + \gamma^2 = 0$. Consequently, for each of the $\frac{q - \sqrt{q} - 2}{2}$ pairs, $\alpha$ is uniquely determined. Thus, the total number of such characteristic polynomials is $\frac{q - \sqrt{q} - 2}{2}$.
\end{proof}
\begin{lemma}\label{irrpoly}
The number of irreducible characteristic polynomials $f_c$, $c\in\mathbb{F}_q^*$ over $\mathbb{F}_q$  is:
\begin{equation*}
\begin{cases}
\dfrac{q-\sqrt{q}}{3} \mbox{ when }\sqrt{q} \equiv 0 \pmod{3} \\
\dfrac{q-\sqrt{q}}{3} \mbox{ when }\sqrt{q} \equiv 1 \pmod{3} \\
\dfrac{q-\sqrt{q}-2}{3} \mbox{ when }\sqrt{q} \equiv 2 \pmod{3} 
\end{cases}
\end{equation*}
\end{lemma}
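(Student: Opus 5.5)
The plan is to get the count by complement rather than by parametrizing irreducible cubics directly. By Theorem~\ref{5polys}, each $f_c$ with $c\in\mathbb{F}_q^*$ falls into exactly one of the five mutually exclusive classes (i)--(v), and class (v) is the irreducible one. So it suffices to know the total number of distinct polynomials $f_c$ and to subtract the sizes of classes (i)--(iv), which Lemmas~\ref{polysq03}, \ref{polysq13}, \ref{polysq23} and \ref{conj3roots} already supply.

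First I would check that $c\mapsto f_c$ is injective on $\mathbb{F}_q^*$. The coefficient of $\lambda^2$ in $f_c$ is $-c^{\sqrt{q}}$, and Frobenius is a bijection of $\mathbb{F}_q$, so $c$ is recovered from $f_c$. Hence there are exactly $q-1$ distinct polynomials $f_c$. This matters because the earlier lemmas count polynomials (root sets), while the lemma to be proved counts polynomials as well; injectivity makes the bookkeeping consistent.

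Next I would add up classes (i)--(iv) in each residue class of $\sqrt{q}$ modulo $3$ and subtract from $q-1$.
\begin{itemize}
\item For $\sqrt{q}\equiv 0 \pmod 3$: there are $0$ triple-root polynomials, $\sqrt{q}$ double-root polynomials, $\frac{q-\sqrt{q}}{6}$ with three distinct roots in $\mu_{\sqrt{q}+1}$, and $\frac{q-\sqrt{q}-2}{2}$ of type (iv). Then
$$q-1-\sqrt{q}-\frac{4q-4\sqrt{q}-6}{6}=\frac{q-\sqrt{q}}{3}.$$
\item For $\sqrt{q}\equiv 1 \pmod 3$: the classes have sizes $1$, $\sqrt{q}$, $\frac{q-\sqrt{q}}{6}$ and $\frac{q-\sqrt{q}-4}{2}$. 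Then
$$q-2-\sqrt{q}-\frac{2q-2\sqrt{q}-6}{3}=\frac{q-\sqrt{q}}{3}.$$
\item For $\sqrt{q}\equiv 2 \pmod 3$: the classes have sizes $1$, $\sqrt{q}$, $\frac{q-\sqrt{q}-2}{6}$ and $\frac{q-\sqrt{q}-2}{2}$. Then
$$q-2-\sqrt{q}-\frac{2q-2\sqrt{q}-4}{3}=\frac{q-\sqrt{q}-2}{3}.$$
\end{itemize}

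The main obstacle is not the arithmetic but making sure the subtraction is legitimate: the earlier counts must be counts of distinct $c$ with no double counting across classes. Disjointness follows from the exclusivity in Theorem~\ref{5polys}. Within class (iv), the pair $\{\beta,\gamma\}$ together with $\alpha$ determines the root set, hence $f_c$, hence $c$. I would also double-check the edge cases at characteristic $2$ and $3$. For $p=2$ the triple root occurs at $c=1$ rather than $c=3$, and for $p=3$ there is no triple root. I would confirm that in each case the triple-root count used above ($1$ or $0$) agrees with Lemmas~\ref{polysq03}--\ref{polysq23}.

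As a sanity check, I would verify that each result is a nonnegative integer: $q-\sqrt{q}=\sqrt{q}(\sqrt{q}-1)$ is divisible by $3$ when $\sqrt{q}\equiv 0,1 \pmod 3$, and $q-\sqrt{q}-2=(\sqrt{q}-2)(\sqrt{q}+1)$ is divisible by $3$ when $\sqrt{q}\equiv 2 \pmod 3$. Independently, irreducible $f_c$ correspond to Frobenius-orbits of size $3$ (Subcase 3.3), which supports the factor $\frac13$.
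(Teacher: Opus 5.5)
Your proposal is correct and follows the same approach as the paper. Both obtain the irreducible count as the complement, among the $q-1$ polynomials $f_c$, of classes (i)--(iv) of Theorem~\ref{5polys}, using the counts from Lemmas~\ref{polysq03}, \ref{polysq13}, \ref{polysq23} and \ref{conj3roots}. Your injectivity check for $c\mapsto f_c$ and the explicit residue-by-residue arithmetic, which is correct in all three cases, fill in details that the paper's one-line proof leaves implicit.
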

\begin{proof}
By Theorem \ref{5polys}, since any characteristic polynomial must either be irreducible or possess three roots (counting multiplicities) in $\mathbb{F}_{q^s}$, the number of irreducible polynomials can be determined by subtracting the counts of all previously identified cases from the total. Consequently, their cardinality can be obtained by subtracting all other cases from the total.
\end{proof}
\begin{theorem}\label{Cctable}
 Using above lemmas, we obtain the following table:

\begin{table}[h]
\centering
\small 
\renewcommand{\arraystretch}{1.8} 
\begin{tabular}{|c|c|c|c|c|}
\hline
\multirow{2}{*}{Case} & \multirow{2}{*}{type $(n,i)$} & \multicolumn{3}{c|}{the number of polynomials} \\ \hhline{~~|---} 
 &  & $\sqrt{q} \equiv 0 \pmod{3}$ & $\sqrt{q} \equiv 1 \pmod{3}$ & $\sqrt{q} \equiv 2 \pmod{3}$ \\ \hline
(I)   & $(q-\sqrt{q}+1, 0)$ & $\frac{q-\sqrt{q}}{3}$ & $\frac{q-\sqrt{q}}{3}$ & $\frac{q-\sqrt{q}-2}{3}$ \\ \hline
(II)  & $(q+1, 1)$          & 0 & 1 & 1 \\ \hline
(III) & $(q+1, 2)$          & $\frac{q-\sqrt{q}-2}{2}$ & $\frac{q-\sqrt{q}-4}{2}$ & $\frac{q-\sqrt{q}-2}{2}$ \\ \hline
(IV)  & $(q+\sqrt{q}+1, 1)$ & $\sqrt{q}$ & $\sqrt{q}$ & $\sqrt{q}$ \\ \hline
(V)   & $(q+2\sqrt{q}+1, 0)$ & $\frac{q-\sqrt{q}}{6}$ & $\frac{q-\sqrt{q}}{6}$ & $\frac{q-\sqrt{q}-2}{6}$ \\ \hline
\end{tabular}
\end{table}
\noindent 
In the table, ``Cases'' are defined as follows;
\begin{enumerate}[label={\rm (\Roman*)}]
    \item[\rm{(I)}] $f_c$ is an irreducible polynomial.
    \item[\rm{(II)}]  $f_c$ has a triple root.
    \item[\rm{(III)}]  $f_c$ has three simple roots, only one of them is a $(\sqrt{q}+1)$-th root of unity.
    \item[\rm{(IV)}]  $f_c$ has a double root and the other simple root, both of them are $(\sqrt{q}+1)$-th roots of unity.
    \item[\rm{(V)}]  $f_c$ has three simple roots which are all $(\sqrt{q}+1)$-th roots of unity.
\end{enumerate}
\end{theorem}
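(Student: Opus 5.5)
The table has two ingredients: the counts in the last three columns and the type $(n,i)$ attached to each case. For the counts, the plan is to quote Theorem~\ref{5polys}, which says every $f_c$ with $c\in\mathbb{F}_q^*$ falls into exactly one of Cases (I)--(V). Row (II) then comes from Lemmas~\ref{polysq03}(i), \ref{polysq13}(i) and \ref{polysq23}(i). Row (IV) comes from part (ii) of the same lemmas, row (V) from part (iii), row (III) from Lemma~\ref{conj3roots}, and row (I) from Lemma~\ref{irrpoly}. Since $f_c$ determines $c^{\sqrt{q}}$ through its $\lambda^2$ coefficient, it also determines $c$, so counting polynomials is the same as counting curves $C_c$. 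As a sanity check, each column should sum to $q-1$.

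The main work is computing $N_q(C_c)$ and $I_q(C_c)$ in each case. For $N_q$ we use Lemma~\ref{mathcalY}(iv), $N_q=(q-\sqrt{q}+1)+s\sqrt{q}$, where $s$ is the number of rational $\sqrt{q}$-lines $y=y_0$. Lemma~\ref{eigenval}(ii) gives a map $y_0\mapsto c^{\sqrt{q}}(1-y_0)$ from these lines to the roots of $f_c$ lying in $\mu_{\sqrt{q}+1}$. Lemma~\ref{musq} gives the inverse map $\lambda_0\mapsto 1-c^{-\sqrt{q}}\lambda_0$. Both maps are injective affine maps, so $s$ equals the number of \emph{distinct} roots of $f_c$ in $\mu_{\sqrt{q}+1}$. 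This yields $s=0,1,1,2,3$ in Cases (I)--(V), and hence $N_q=q-\sqrt{q}+1,\ q+1,\ q+1,\ q+\sqrt{q}+1,\ q+2\sqrt{q}+1$.

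For $I_q$, Lemma~\ref{prop:basic}(iii) shows that the two points at infinity are not inflexions. By Proposition~\ref{eigenvector}, the affine rational inflexions are exactly the rational eigenvectors of $U$ that lie on $C_c$. Lemma~\ref{eigenvec} shows that each eigenvalue has the one-dimensional eigenspace spanned by $(\frac{1}{c\lambda_0},\,1-c^{-\sqrt{q}}\lambda_0,\,1)^t$. So $I_q$ is at most the number of distinct rational roots of $f_c$ whose eigenvector lies on the curve. The cases then go as follows.
\begin{itemize}
\item[(I)] There is no rational eigenvalue, so $I_q=0$.
\item[(II)] There is one eigenvalue, and it is a multiple root. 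The lemma on multiple roots shows its eigenvector is an inflexion, so $I_q=1$.
\item[(IV)] The double root gives an inflexion by the same lemma, so $I_q\ge 1$.
\item[(V)] The lemma on three roots in $\mu_{\sqrt{q}+1}$ gives $I_q=0$.
\item[(III)] The lemma on exactly one root in $\mu_{\sqrt{q}+1}$ shows the two conjugate-type eigenvectors are inflexions, so $I_q\ge2$.
\end{itemize}

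Two upper bounds remain, and I expect them to be the main obstacle: $I_q\le 1$ in (IV) and $I_q\le 2$ in (III). In both cases the extra eigenvector to rule out is the one attached to a simple root $\lambda_0\in\mu_{\sqrt{q}+1}$. I would substitute that eigenvector into \eqref{eq:Cc}, exactly as in the multiple-root lemma. There the expression came out as $\frac{1}{c^{\sqrt{q}}\lambda_0^2}\bigl(\lambda_0 f_c'(\lambda_0)-f_c(\lambda_0)\bigr)=\frac{f_c'(\lambda_0)}{c^{\sqrt{q}}\lambda_0}$, using $\lambda_0^{\sqrt{q}}=\lambda_0^{-1}$. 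This is nonzero precisely because $\lambda_0$ is simple, so the point is not on the curve and hence not an inflexion. If that calculation turned out to be delicate, the fallback is Lemma~\ref{mirror-inter}. In (IV), $N_q=q+\sqrt{q}+1$, and a second inflexion would contribute at least $\sqrt{q}+1$ to the intersection count, giving $(q+\sqrt{q}+1)+2\sqrt{q}>(\sqrt{q}+1)^2$, a contradiction. In (III), $N_q=q+1$ and the bound is only $3\sqrt{q}\le 2\sqrt{q}$ in excess, which again fails, so $I_q\le 2$. Altogether this gives the stated types $(q-\sqrt{q}+1,0)$, $(q+1,1)$, $(q+1,2)$, $(q+\sqrt{q}+1,1)$ and $(q+2\sqrt{q}+1,0)$.
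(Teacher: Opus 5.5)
Your proposal is correct and follows the same route as the paper. The paper's proof of this theorem is only the sentence ``using above lemmas'': Theorem~\ref{5polys} gives the case split, Lemmas~\ref{polysq03}--\ref{irrpoly} give the counts, and the eigenvector/inflexion lemmas give the types.

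What you add is explicit justification for two steps the paper leaves implicit:
\begin{enumerate}[(a)]
\item You show that $s$ equals the number of distinct roots of $f_c$ in $\mu_{\sqrt{q}+1}$, using the mutually inverse maps of Lemmas~\ref{eigenval}(ii) and~\ref{musq}.
\item You prove the upper bounds $I_q\le 1$ in (IV) and $I_q\le 2$ in (III). None of the paper's lemmas establishes these; they only show that the double-root eigenvector, or the two ``conjugate'' eigenvectors, are inflexions.
\end{enumerate}

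Your identity for (b) checks out. For a root $\lambda_0\in\mu_{\sqrt{q}+1}$, substituting its eigenvector into the affine equation gives $f_c'(\lambda_0)/(c^{\sqrt{q}}\lambda_0)$, and the derivation uses only $\lambda_0^{\sqrt{q}}=\lambda_0^{-1}$. So such an eigenvector lies on $C_c$ exactly when $\lambda_0$ is a multiple root. This also yields case (V) directly, without Lemma~\ref{mirror-inter}. Your Bézout fallback through Lemma~\ref{mirror-inter} is valid as well.

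One small point: Lemma~\ref{eigenvec} only treats eigenvectors normalized with $z=1$. That is enough here, since the rational points at infinity are already excluded; alternatively, the first two rows of $U$ force $x=y=0$ whenever $z=0$.
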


\begin{theorem} For any $q$, there exist curves of exactly eight types, $(q\sqrt{q}+1,q\sqrt{q}+1)$, $(\sqrt{q}+1,\sqrt{q}+1)$, $(1,1)$, $(q-\sqrt{q}+1,0)$, $(q+1,1)$, $(q+1,2)$, $(q+\sqrt{q}+1,1)$, and $(q+2\sqrt{q}+1,0)$.
\end{theorem}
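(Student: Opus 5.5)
The plan is to split all Hermitian-relative curves $C$ according to whether $N_q(C)=I_q(C)$ or $C$ has a rational point that is not an inflexion, and to show in each branch that the attainable types are exactly those listed. For the first branch, Theorem~\ref{same} already says the types are precisely $(q\sqrt{q}+1,q\sqrt{q}+1)$, $(\sqrt{q}+1,\sqrt{q}+1)$ and $(1,1)$, and that each is realized: the Hermitian curve, the curves \eqref{diagonaloftype1}, and the curves \eqref{eq1pt}. No further work is needed there.

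For the second branch, Proposition~\ref{Cab} reduces every such curve to some $C_{a,b}$ with $a\neq0$. I will treat the two subcases separately.
\begin{enumerate}[(i)]
\item If $b=0$, Theorem~\ref{Ca0} gives the types $(q+1,1)$, $(q+1,2)$, $(q+2\sqrt{q}+1,0)$ and $(q-\sqrt{q}+1,0)$, according to $\sqrt{q}\bmod 3$.
\item If $b\neq0$, Proposition~\ref{prop:Cc} reduces to $C_c$. Theorem~\ref{5polys} shows that $f_c$ falls into exactly one of the Cases (I)--(V) of Theorem~\ref{Cctable}.
\end{enumerate}

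For each Case I will read off $N_q$ from Lemma~\ref{mathcalY}\,\ref{ssqrtq}, using Lemmas~\ref{eigenval}(ii) and \ref{musq}. The number $s$ of $\sqrt{q}$-lines equals the number of distinct roots of $f_c$ lying in $\mu_{\sqrt{q}+1}$, because the map $\lambda\mapsto 1-c^{-\sqrt{q}}\lambda$ is injective. This gives $s=0,1,1,2,3$ in Cases (I)--(V), hence $N_q=q-\sqrt{q}+1,\ q+1,\ q+1,\ q+\sqrt{q}+1,\ q+2\sqrt{q}+1$.

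For $I_q$ I will use Proposition~\ref{eigenvector}: rational inflexions correspond to eigenvectors of $U$ with rational eigenvalue that lie on $C_c$.
\begin{itemize}
\item In Case (I) there are none.
\item In Case (V) the lemma on three distinct roots in $\mu_{\sqrt{q}+1}$ gives none.
\item In Case (III) the lemma on a single root in $\mu_{\sqrt{q}+1}$ gives the two conjugate-pair eigenvectors. I still need to check that the eigenvector for the $\mu_{\sqrt{q}+1}$ root is not an inflexion. For this I will use the substitution computation appearing in the multiple-root lemma: an eigenvector $v_\lambda$ lies on $C_c$ iff $\lambda P'(\lambda)-P(\lambda)=0$, i.e.\ iff $\lambda$ is a multiple root. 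So a simple root gives a point off the curve, and in Case (III) $I_q=2$.
\item The same criterion gives $I_q=1$ in Cases (II) and (IV), since there the only multiple root is the triple or double root.
\end{itemize}
The main obstacle is exactly this verification that a simple root in $\mu_{\sqrt{q}+1}$ never yields an inflexion. I will redo the substitution of $v_\lambda$ into \eqref{eq:affCc} using only $\lambda^{\sqrt{q}+1}=1$, which was the only property of the root used there.

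Finally, combining both branches gives at most the eight listed types. For existence of each type, I will cite the positive counts in Theorem~\ref{Cctable} and Theorem~\ref{Ca0}.
\begin{itemize}
\item $(q+\sqrt{q}+1,1)$ has $\sqrt{q}$ polynomials.
\item $(q-\sqrt{q}+1,0)$ and $(q+2\sqrt{q}+1,0)$ have positive counts whenever $q\ge 9$. For $q=4$ they come from $C_{a,0}$ in Theorem~\ref{Ca0}(iii) with suitable $a$, matching the list \eqref{eq:types}.
\item $(q+1,1)$ comes from Case (II) when $3\nmid\sqrt{q}$, and from $C_{a,0}$ via Theorem~\ref{Ca0}(i) when $3\mid\sqrt{q}$.
\item $(q+1,2)$ comes from Case (III) when the table entry is positive, or from Theorem~\ref{Ca0}(ii) when $\sqrt{q}\equiv1\pmod 3$.
\end{itemize}
I will check the small-$q$ boundary entries of the table (e.g.\ $q=4$) against \eqref{eq:types} by hand. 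For $q=4$ that list contains no type $(q+1,2)=(5,2)$, so the claim that all eight types occur for every $q$ may need to be restricted to $q>4$ there.
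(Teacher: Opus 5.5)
Your proposal is correct and follows the paper's proof, which simply combines Theorems~\ref{same}, \ref{Ca0} and \ref{Cctable}. Your criterion that, for a root $\lambda\in\mu_{\sqrt{q}+1}$, the eigenvector lies on $C_c$ if and only if $\lambda P'(\lambda)-P(\lambda)=0$ (i.e.\ $\lambda$ is a multiple root) correctly supplies the $I_q$ entries that the paper leaves to ``the above lemmas.''

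Your caveat about $q=4$ is also right. There the type $(5,2)$ does not occur:
\begin{itemize}
\item the Case (III) count $\frac{q-\sqrt{q}-2}{2}$ is $0$;
\item $C_{a,0}$ has no rational inflexions, since $\sqrt{q}\equiv 2 \pmod 3$;
\item the count in Table~\ref{Iqcge2}(c) is $0$.
\end{itemize}
So the ``eight types for any $q$'' claim holds only for $q\ge 9$, which agrees with the seven types listed in \eqref{eq:types}.
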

\begin{proof}
 From Theorems \ref{same}, \ref{Ca0}, and \ref{Cctable},  we obtain the result.
\end{proof}

\noindent
Acknowledgments.\quad The second author was partially supported by
the National Research Foundation of Korea(NRF) grant funded by the Korea government(MSIT) (2022R1A2C1012291).
%


\begin{thebibliography}{00}
\bibitem{bos-cha1966}
R. C. Bose and I. M. Chakravarti,
{\em Hermitian varieties in a finite projective space $PG(N, q^2)$},
Canad. J. Math. 18 (1966) 1161--1182.
\bibitem{hir1979}
J. W. P. Hirschfeld,
Projective geometries over finite fields,
Oxford University Press,
Oxford, 1979.
\bibitem{hir1998}
J. W. P. Hirschfeld,
Projective geometries over finite fields
(second edition),
Oxford University Press,
Oxford, 1998.
\bibitem{hom1987}
M. Homma,
{\em
Funny plane curves in characteristic $p>0$},
Comm. Algebra 15 (1987) 1469--1501.
\bibitem{hom2006}
M. Homma,
{\em
Galois points on a Hermitian curve},
Comm. Algebra 34 (2006) 4503–4511.

\bibitem{hom-kim2024}
M. Homma and S.J. Kim,
{\em Relatives of the Hermitian curve}, Journal of Geometry(2026) 117:11. https://doi.org/10.1007/s00022-026-00795-8

\bibitem{par1986}
R. Pardini,
{\em
Some remarks on plane curves over fields
of finite characteristic},
Compositio Math. 60 (1986) 3--17.

\end{thebibliography}
\end{document}